\theoremstyle{plain}
\newtheorem{thm}{Theorem}[section]
\newtheorem{conj}[thm]{Conjecture}
\newtheorem{lem}[thm]{Lemma}
\newtheorem*{ex*}{Example}
\newcommand{\N}{\mathbb{N}}
\newcommand{\R}{\mathbb{R}}
\newcommand{\Z}{\mathbb{Z}}
\def\Rr{\mathbb{R}_1^d}
\def\RR{\mathbb{R}_+^d}
\def\RRo{\overline{\mathbb{R}_+^d}}	
\def\1{\boldsymbol{1}}
\def\e{d'}
\def\E{\mathcal{E}}
\def\F{\mathcal{F}}
\def\W{\mathcal{W}}
\def\mE{m_\mathcal{E}}
\def\mF{m_\mathcal{F}}
\def\a{\alpha}
\def\b{\beta}
\def\ab{\alpha,\beta}
\def\t{\theta}
\def\vp{\varphi}
\DeclareMathOperator{\loc}{loc}
\DeclareMathOperator{\glob}{glob}
\DeclareMathOperator{\dom}{Dom}
\DeclareMathOperator{\spann}{span}
\def\lam{\lambda}
\title[Maximal operators of exotic and non-exotic semigroups]
	{Maximal operators of exotic and non-exotic \\ Laguerre and other semigroups
	associated with \\ classical orthogonal expansions}
\author[A. Nowak]{Adam Nowak}
\address{Adam Nowak, \newline
			Institute of Mathematics,
		Polish Academy of Sciences, \newline
      \'Sniadeckich 8,
      00--656 Warszawa, Poland    
      }
\email{anowak@impan.pl}
\author[P. Sj\"ogren]{Peter Sj\"ogren}
\address{Peter Sj\"ogren, \newline
			Mathematical Sciences, University of Gothenburg \newline
Mathematical Sciences, Chalmers University of Technology \newline
SE-412 96 G\"oteborg, Sweden 
      }
\email{peters@chalmers.se}
\author[T.Z. Szarek]{Tomasz Z. Szarek}
\address{Tomasz Z. Szarek,     \newline
			Institute of Mathematics,
		Polish Academy of Sciences, \newline
      \'Sniadeckich 8,
      00--656 Warszawa, Poland
      }
\email{szarektomaszz@gmail.com}
\begin{document}

\begin{abstract}
Classical settings of discrete and continuous orthogonal expansions, like Laguerre, Bessel and Jacobi,
are associated with second order differential operators playing the role of the Laplacian.
These depend on certain parameters of type that are usually restricted to a half-line, or a product
of half-lines if higher dimensions are considered. Following earlier research done by Hajmirzaahmad, we deal
in this paper with Laplacians in the above-mentioned contexts with no restrictions on the type parameters and bring to attention naturally
associated orthogonal systems that in fact involve the classical ones, but are different. 
This reveals new frameworks related to classical orthogonal expansions and thus new potentially rich
research areas, at least from the harmonic analysis perspective.
To support the last claim we focus on maximal operators of multi-dimensional
Laguerre, Bessel and Jacobi semigroups, with unrestricted type parameters, and prove that they satisfy
weak type $(1,1)$ estimates with respect to the appropriate measures. Generally, these measures are not
(locally) finite, which makes a contrast with the classical situations and generates new difficulties.
An important partial result of the paper is a new proof of the weak type $(1,1)$ estimate for the classical 
multi-dimensional Laguerre semigroup maximal operator.
\end{abstract}

\maketitle

\footnotetext{
\emph{\noindent 2010 Mathematics Subject Classification:} primary 42C10; secondary 42C05, 42C99.\\
\emph{Key words and phrases:} Laguerre operator, Laguerre polynomials, Laguerre expansions,
Jacobi operator, Jacobi polynomials, Jacobi expansions, Bessel operator, Hankel transform,
maximal operator, weak type estimate.		
}

\section{Introduction} \label{sec:intro}

Given a parameter $\a \in \R$, consider the Laguerre differential operator
$$
L_{\a} = - x\frac{d^2}{dx^2} - (\a+1-x)\frac{d}{dx}
$$
acting on functions on the positive half-line $\R_+ = (0,\infty)$.
There is a natural measure $\mu_{\a}$ in $\R_+$ associated with $L_{\a}$,
$$
d\mu_{\a}(x) = x^{\a}e^{-x}\, dx,
$$
which makes $L_{\a}$ formally symmetric in $L^2(d\mu_{\a})$. This is immediately seen from the factorization
$$
L_{\a}f(x) = - \big( x^{\a}e^{-x}\big)^{-1} \frac{d}{dx} \Big( x^{\a+1}e^{-x} \frac{d}{dx}f(x)\Big).
$$
Denote by $\mathcal{D}_{\a}$ the subspace of those $f \in L^2(d\mu_{\a})$ for which $L_{\a}f$ exists
in the weak sense and is in $L^2(d\mu_{\a})$, that is the distribution $L_{\a}f$ is represented by
a function that belongs to $L^2(d\mu_{\a})$.

When $\a \ge 1$, the operator $L_{\a}^{\textrm{cls}} = L_{\a}$ (here ``cls'' stands for ``classical'')
considered on the domain
$\dom L_{\a}^{\textrm{cls}} = \mathcal{D}_{\a}$ is self-adjoint. Its spectral decomposition
is given by the classical Laguerre polynomials $L_n^{\a}$, $n=0,1,2,\ldots$, which form an orthogonal
basis in $L^2(d\mu_{\a})$; one has $L_{\a} L_n^{\a} = n L_n^{\a}$. In fact, the self-adjoint operator
$L_{\a}^{\textrm{cls}}$ is characterized by
$$
L_{\a}^{\textrm{cls}}f = \sum_{n=0}^{\infty} n \, \big\langle f, \breve{L}_n^{\a}\big\rangle_{d\mu_{\a}}\,
	\breve{L}_n^{\a},
$$
and its domain
$\mathcal{D}_{\a}$ coincides with the subspace of all $f \in L^2(d\mu_{\a})$ for which this
series converges in $L^2(d\mu_{\a})$; here $\breve{L}_n^{\a} = L_n^{\a}/\|L_n^{\a}\|_{L^2(d\mu_{\a})}$
are the Laguerre polynomials normalized in $L^2(d\mu_{\a})$.

The situation is somewhat different when $-1 < \a < 1$.
The self-adjoint operator $L_{\a}^{\textrm{cls}}$ and its domain are still given by the spectral 
decomposition in terms of Laguerre polynomials, as above. But the domain is smaller than $\mathcal{D}_{\a}$, 
since a boundary condition must be imposed; more precisely 
\begin{equation} \label{domLcl}
\dom L_{\a}^{\textrm{cls}} = \big\{ f \in \mathcal{D}_{\a} : \lim_{x \to 0^+} x^{\a+1}f'(x) = 0\big\}.
\end{equation}
Actually, \eqref{domLcl} describes $\dom L_{\a}^{\textrm{cls}}$ for all $\a > -1$,
since the boundary condition is automatically satisfied for $f \in \mathcal{D}_{\a}$ in case $\a \ge 1$.
All this is well known, see \cite{H2,HK} and references given there.
Harmonic analysis related to the (self-adjoint and non-negative) `Laplacian' $L_{\a}^{\textrm{cls}}$,
$\a > -1$, 
in particular Laguerre polynomial expansions, has been extensively studied in one or more dimensions by 
various authors; see e.g.\ \cite{Di,FSS1,FSS2,GLLNU,GIT,MaSp,Mu,Mu1,Mu2,Mu3,MuWe,No,NS,Sa1,Sa2,Sa4,Sa5}.

However, much less has been done in case $\a \le -1$. The reason is that in this range of the type
parameter $\a$, the system of Laguerre polynomials is no longer contained in $L^2(d\mu_{\a})$ and,
consequently, no self-adjoint operator can be defined directly via the $L_n^{\a}$ in the same spectral
manner. Nevertheless, as discovered by Hajmirzaahmad \cite{H2}, there is another complete orthogonal
system in $L^2(d\mu_{\a})$, involving the Laguerre polynomials, and this allows one to pursue
the matters in the `exotic' case $\a \le -1$. The details are as follows.

Assume that $\a \le -1$. Then $L_{\a}^{\textrm{exo}} = L_{\a}$ has domain
$\dom L_{\a}^{\textrm{exo}} = \mathcal{D}_{\a}$ and is self-adjoint (here ``exo'' stands for ``exotic''). 
Its spectral decomposition is given in terms of the system $\{x^{-\a}L_n^{-\a} : n\ge 0\}$
which is an orthogonal basis in $L^2(d\mu_{\a})$. We have $L_{\a}(x^{-\a}L_n^{-\a}) = (n-\a)x^{-\a}L_n^{-\a}$
and
\begin{equation} \label{actLex}
L_{\a}^{\textrm{exo}}f = \sum_{n=0}^{\infty} (n-\a)\, \big\langle f,
	 x^{-\a}\breve{L}_n^{-\a}\big\rangle_{d\mu_{\a}}\, x^{-\a}\breve{L}_n^{-\a};
\end{equation}
$\dom L_{\a}^{\textrm{exo}}$ coincides with the subspace of all $f \in L^2(d\mu_{\a})$
for which the above series converges
in $L^2(d\mu_{\a})$; notice that $x^{-\a}\breve{L}_n^{-\a}$ is the normalization of
$x^{-\a}L_n^{-\a}$ in $L^2(d\mu_{\a})$.

In fact, the spectral formula \eqref{actLex} defines the self-adjoint operator $L_{\a}^{\textrm{exo}}$
as long as all $x^{-\a}L_n^{-\a}$, $n=0,1,2,\ldots$, remain in $L^2(d\mu_{\a})$, that is precisely
for all $\a < 1$. Then $L_{\a}^{\textrm{exo}}$ is identified with $L_{\a}$ understood as a 
differential operator acting on the domain
$$
\dom L_{\a}^{\textrm{exo}} = \big\{ f \in \mathcal{D}_{\a} : \lim_{x \to 0^+}
	[xf'(x)+ \a f(x)] = 0 \big\}.
$$
The boundary condition here is automatically satisfied in case $\a \le -1$.
Observe that in the overlapping range $-1< \a < 1$ the self-adjoint operators
$L_{\a}^{\textrm{cls}}$ and $L_{\a}^{\textrm{exo}}$ are different unless $\a = 0$. Furthermore, 
$L_{\a}^{\textrm{exo}}$ is non-negative
if and only if $\a \le 0$ and strictly positive when $\a < 0$.

There is also a nice probabilistic background of these considerations, see \cite[Appendix 1]{BS}.
Indeed, both $-L_{\a}^{\textrm{cls}}$, $\a > -1$, and $-L_{\a}^{\textrm{exo}}$, $\a < 0$, generate 
semigroups in $L^2(d\mu_{\a})$ that are transition probability semigroups for linear diffusions
known as the Laguerre processes. The process related to $L_{\a}^{\textrm{exo}}$ is submarkovian,
due to the nature of the left boundary point $x=0$. In particular, in the overlapping range
$-1 < \a < 0$, this boundary point is reflecting in the case of $L_{\a}^{\textrm{cls}}$ and killing
in the case of $L_{\a}^{\textrm{exo}}$. For $\a \ge 0$ the left boundary cannot be reached by the
Laguerre process and thus does not belong to the state space. A more precise description of the boundary
behavior is the following (see \cite{BS} for the terminology): $x=\infty$ is always a natural point,
but the nature of $x=0$ depends on $\a$ and it is entrance-not-exit for (classical) $\a \ge 0$, 
exit-not-entrance for (exotic) $\a \le -1$, non-singular reflecting for non-exotic $-1 < \a < 0$, and finally
non-singular killing for exotic $-1 < \a < 0$.

The principal aim of this paper is to initiate the study of the `Laplacian' $L_{\a}^{\textrm{exo}}$,
as well as its counterparts in other settings,
and the associated orthogonal expansions from a harmonic analysis perspective. This environment is
different from and more complicated than the well-studied classical case of
$L_{\a}^{\textrm{cls}}$, since for $\a \le -1$ the measure $\mu_{\a}$ is not finite near $x=0$.
More precisely, in the metric measure space
$(\R_+,\mu_{\a},|\cdot|)$ there are balls near the origin
of infinite measure and arbitrarily small radii (here $|\cdot|$ stands for the Euclidean distance).
Therefore many standard technical tools, and even intuition, fail in this context.

Our main result pertains to a general $d$-dimensional, $d \ge 1$, self-adjoint
`Laplacian' $\mathbb{L}_{\a}$
emerging from summing the action of $L_{\a_i}^{\textrm{cls}}$ or $L_{\a_i}^{\textrm{exo}}$ in each 
coordinate; now $\a \in \R^d$ is a multi-parameter.
We prove that the maximal operator of the semigroup generated by $-\mathbb{L}_{\a}$ satisfies
the weak type $(1,1)$ estimate with respect to
a measure which is the tensor product of the one-dimensional $\mu_{\a_i}$,
see Theorem \ref{thm:weak}. This implies, in particular, the almost everywhere convergence
for the semigroup to initial values taken from $L^1(d\mu_{\a})$.
We emphasize that even in the classical multi-dimensional
situation, when $\mathbb{L}_{\a}$ with $\a \in (-1,\infty)^d$ corresponds to
$\sum_{i=1}^d L_{\a_i}^{\textrm{cls}}$,
proving weak type $(1,1)$ of the maximal operator is a complicated task. The first proof
was delivered by Dinger \cite{Di}, and another one more recently by Sasso \cite{Sa4} under the restriction
$\a \in [0,\infty)^d$. 
Our methods also lead to a new complete proof of this result, simpler than the existing ones; 
 see Theorem~\ref{thm:max_Lcl}.
These new arguments are no doubt of independent interest.

It is remarkable that, in much the same spirit, exotic `Laplacians' occur in numerous other settings
well known in the literature, like those of Laguerre functions, Jacobi trigonometric polynomials and 
functions and Fourier-Bessel systems, just to mention a few.
In this paper we investigate only two further important instances related to
the Bessel and Jacobi differential operators
\begin{align*}
B_{\nu} & = -\frac{d^2}{dx^2} - \frac{2\nu+1}{x} \frac{d}{dx},\\
J_{\ab} & = -(1-x^2) \frac{d^2}{dx^2} - \big[\beta-\alpha - (\alpha+\beta+2)x\big]\frac{d}{dx},
\end{align*}
that correspond to the (modified) Hankel transform on $\R_+$ and the 
classical Jacobi polynomials on $(-1,1)$, respectively.
Harmonic analysis of self-adjoint `Laplacians' emerging from $B_{\nu}$ and $J_{\ab}$ in the classical ranges
of the parameters $\nu,\ab > -1$ has been widely investigated, see for instance
\cite{BaU,BCC0,BCC,BCDFR,BCN,BDT,BFM,BFS,BHNV,CC,CaU,CaSz,CKP,DPW,MuS,NoSj0,NoSj}
and also references therein, which is only a small part of the related literature.
In both contexts, we consider the classical and exotic self-adjoint operators
and introduce the resulting general multi-dimensional Jacobi and Bessel `Laplacians'. Then we study the 
maximal operators of the associated multi-dimensional semigroups and prove that they satisfy 
weak type $(1,1)$ estimates with respect to the appropriate measures,
see Theorems \ref{thm:weakB} and \ref{thm:weakJd}.
Again, this implies almost everywhere convergence for the semigroups to prescribed $L^1$ initial data.
It is worth pointing out that the classical and exotic Bessel and Jacobi frameworks possess probabilistic
interpretations analogous to that of Laguerre indicated above. In particular, the natures of the boundary 
points depend on the type parameters in exactly the same way.

The paper is organized as follows.
In Section \ref{sec:Lagcls} we recall the classical multi-dimensional Laguerre polynomial setting
and give a new proof of the weak type $(1,1)$ estimate for the associated Laguerre semigroup maximal 
operator.
Then we introduce in Section \ref{sec:Lagexo} a general exotic multi-dimensional Laguerre framework based
on the `Laplacian' $\mathbb{L}_{\a}$ and prove the weak type $(1,1)$ estimate for the maximal
operator of the semigroup generated by $-\mathbb{L}_{\a}$. Sections \ref{sec:Bes} and \ref{sec:Jac}
are devoted to general exotic multi-dimensional Bessel and Jacobi contexts, respectively,
and in these sections weak type $(1,1)$ estimates for the maximal operators of the general Bessel and
Jacobi semigroups are obtained.

\subsection*{Notation}
We first point out that $\RR$ always means the product of $d$ half-lines, $\RR = (0,\infty)^d$. 
Throughout the paper we use a standard notation with all symbols referring to
the metric measure spaces $(\Omega,\mu,|\cdot|)$. Here $\Omega = \RR$, $(-1,1)^d$ or $(0,\pi)^d$, and
$|\cdot|$ stands for the Euclidean norm in $\Omega$, whereas $\mu$ is a suitable measure in $\Omega$.
In particular, for $x \in \Omega$ and $r>0$ we write $B(x,r)$ to denote the open ball in $\Omega$
centered at $x$ and of radius $r$. Further, by $\langle f,g \rangle_{d\mu}$ we
mean $\int_\Omega f\overline{g}\,d\mu$ whenever the integral makes sense.

Furthermore, we use the following notation and abbreviations:
\begin{align*}
 \boldsymbol{1} &= (1,\ldots,1) \in \R^d_+, \\
 \langle \a \rangle &= \a_1+\ldots + \a_d \qquad \textrm{(length of a multi-parameter $\a \in \R^d$)},\\
 \mathds{1} &\equiv \textrm{the constant function equal to $1$ on $\Omega$}, \\
x^{\gamma} & = x_1^{\gamma_1}\cdot \ldots \cdot x^{\gamma_d}_d, \qquad x\in \R^d_+, \quad \gamma \in \R^d, \\
xy & = (x_1 y_1, \ldots, x_d y_d), \qquad x,y \in \R^d, \\
x \vee y & = \max(x,y), \qquad x,y \in \R, \\
x \wedge y & = \min(x,y), \qquad x,y \in \R.
\end{align*}

When writing estimates, we will frequently use the notation $X \lesssim Y$ to indicate that
$X \le C Y$ with a positive constant $C$ independent of significant quantities. We shall write
$X \simeq Y$ when simultaneously $X \lesssim Y$ and $Y \lesssim X$.

\section{The classical Laguerre semigroup maximal operator} \label{sec:Lagcls}

Let $d \ge 1$. Given any multi-parameter $\a \in \R^d$, we define the product measure
$\mu_{\a}$ in $\R^d_+$ by
$$
d\mu_{\a}(x) = x^{\a} e^{-(x_1+\ldots + x_d)}\, dx.
$$

The classical $d$-dimensional Laguerre polynomial setting exists for $\a \in (-1,\infty)^d$.
The system of $d$-dimensional Laguerre polynomials $L_n^{\a}$, $n \in \N^d$, which are just
tensor products $L_n^{\a} = \bigotimes_{i=1}^d L_{n_i}^{\a_i}$
of the one-dimensional Laguerre polynomials, is an orthogonal basis in $L^2(d\mu_{\a})$.
The associated differential operator is $\mathbb{L}_{\a} = \sum_{i=1}^d L_{\a_i}$ ($L_{\a_i}$
acting on the $i$th coordinate variable) and one has $\mathbb{L}_{\a} L_n^{\a} = (n_1+\ldots + n_d)L_n^{\a}$.
Actually, we consider the corresponding self-adjoint operator, denoted by the same symbol
$\mathbb{L}_{\a}$, whose spectral decomposition is given by the $L_n^{\a}$ in the canonical way.

The classical Laguerre semigroup $T_t^{\a} = \exp(-t\mathbb{L}_{\a})$, $t \ge 0$,
has in $L^2(d\mu_{\a})$ the integral representation
\begin{equation} \label{sLc}
T_t^{\a}f(x) = \int_{\R_{+}^d} G_t^{\a}(x,y) f(y)\, d\mu_{\a}(y), \qquad x \in \R^d_{+}, \quad t>0.
\end{equation}
The integral kernel here has the tensor product form
$$
G_t^{\a}(x,y) = \prod_{i=1}^d G_t^{\a_i}(x_i,y_i), \qquad x,y \in \R^d_+, \quad t > 0,
$$
where the one-dimensional kernels are given explicitly by
\begin{align} \nonumber
G_t^{\a_i}(x_i,y_i) & = 
	\sum_{n_i=0}^{\infty} e^{-t n_i} \breve{L}_{n_i}^{\a_i}(x_i) \breve{L}_{n_i}^{\a_i}(y_i) \\
& = \frac{e^{t(\alpha_i+1)/2}}{2\sinh(t/2)} 
	\exp\bigg( - \frac{e^{-t/2}}{2\sinh(t/2)}(x_i+y_i)
	\bigg) (x_i y_i)^{-\alpha_i/2} I_{\alpha_i}\bigg( \frac{\sqrt{x_i y_i}}{\sinh(t/2)}\bigg). \label{sLKer}
\end{align}
Here $x_i,y_i,t > 0$, and $I_{\nu}$ denotes the modified Bessel function of the first kind of order
$\nu> -1$, cf.\ \cite[Chapter 5]{Leb}.
From standard properties of the Bessel function, it follows that
$G_t^{\a}(x,y)$ is strictly positive and smooth in $(x,y,t) \in \R^{2d+1}_+$;
moreover, the integral in \eqref{sLc} converges absolutely for $f \in L^1(d\mu_{\a})$. 
In particular, we see that \eqref{sLc} provides a pointwise definition of $T_t^{\a}f$ for
$f \in L^1(d\mu_{\a})$, thus for all $f \in L^p(d\mu_{\a}) \subset L^1(d\mu_{\a})$, $1 \le p \le \infty$.
Further, 
observe that $T_t^{\a}\mathds{1} = \mathds{1}$, since the Laguerre polynomial
$L^{\a}_{(0,\ldots,0)}$ is constant.
Consequently, $\{T_t^{\a}\}$ is a (positive and symmetric)
semigroup of contractions on each $L^p(d\mu_{\a})$, $1 \le p \le \infty$.
In other words, $\{T_t^{\a}\}$ is a Markovian symmetric diffusion semigroup.
For all these well-known facts see e.g.\ \cite[Section~2]{NS}.

We consider the classical Laguerre semigroup maximal operator
$$
T_*^{\a}f = \sup_{t>0} |T_t^{\a}f|.
$$
By Stein's general maximal theorem for semigroups of operators \cite[p.\,73]{St}, $T^{\a}_{*}$ is bounded on
each $L^p(d\mu_{\a})$, $p > 1$. However, the case $p=1$, in which only the weak type $(1,1)$ estimate
holds, is much more subtle and cannot be dealt with by known general tools.
Nonetheless, in this section we give a new, relatively short and complete, proof of the following result.
\begin{thm} \label{thm:max_Lcl}
Let $d \ge 1$ and $\a \in (-1,\infty)^d$. Then $T^{\a}_{*}$ is bounded from $L^1(d\mu_{\a})$ to
weak $L^1(d\mu_{\a})$, that is, the estimate
$$
\mu_{\a}\big\{ x \in \R^d_+ : T_*^{\a}f(x) > \lambda \big\} \le \frac{C}{\lambda}
	\int_{\R^d_+} |f(x)|\, d\mu_{\a}(x), \qquad \lambda > 0, \quad f \in L^1(d\mu_{\a}),
$$
holds with a constant $C$ independent of $\lambda$ and $f$.
\end{thm}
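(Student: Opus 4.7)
The plan is to use the local/global decomposition that has become standard for harmonic analysis of semigroups with Mehler-type kernels on non-doubling spaces. I would fix a neighborhood $N$ of the diagonal in $\R^d_+ \times \R^d_+$ whose shape reflects the geometry of $\mu_{\a}$ (the measure is doubling on Euclidean balls of radius comparable to $\min(1,1/(1+|x|))$), defined coordinatewise to respect the tensor-product structure of $G_t^{\a}$. Splitting the kernel as $G_t^{\a} = G_t^{\a,\loc} + G_t^{\a,\glob}$ by means of the characteristic functions of $N$ and $N^c$, it suffices to prove the weak type $(1,1)$ estimate for the resulting two maximal operators separately. The critical region is chosen so that on $N$ the Bessel arguments $\sqrt{x_iy_i}/\sinh(t/2)$ in \eqref{sLKer} are large whenever the kernel is not already exponentially small in $t$, while on $N^c$ the kernel admits a time-free envelope.

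\textbf{Local part.} On $N$ the large-argument asymptotic $I_\nu(z)=(2\pi z)^{-1/2}e^z(1+O(1/z))$ applies in each coordinate. Inserting it into \eqref{sLKer}, the $(x_iy_i)^{-\a_i/2}$ prefactor is absorbed by the square-root in the denominator, and completing the square in the exponent transforms $G_t^{\a}(x,y)\mathds{1}_N(x,y)$ into a multi-dimensional Gauss--Weierstrass-type kernel in $|x-y|$ multiplied by smooth, bounded factors. Taking the supremum in $t$ then leads to pointwise domination of $T_*^{\a,\loc}f$ on $N$ by a truncated noncentered Hardy--Littlewood maximal function of $|f|$ with respect to $\mu_{\a}$. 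The weak type $(1,1)$ of the latter is obtained from a Vitali/Besicovitch covering argument exploiting the local doubling of $\mu_{\a}$.

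\textbf{Global part and main obstacle.} On $N^c$ I would aim at a pointwise bound $\sup_{t>0}G_t^{\a}(x,y)\mathds{1}_{N^c}(x,y) \le K(x,y)$ and then show that the integral operator with kernel $K$ maps $L^1(d\mu_{\a})$ to weak $L^1(d\mu_{\a})$; ideally one achieves $\int K(x,y)\,d\mu_{\a}(y)\lesssim 1$ uniformly in $x$, which upgrades the conclusion to a strong $L^1$ bound on this piece. Parametrizing by $r=e^{-t}\in(0,1)$ and using both the exponential asymptotic at infinity and the power asymptotic $I_\nu(z)\sim z^\nu/(2^\nu\Gamma(\nu+1))$ at zero, the one-variable factors split into regimes where the exponent $-r(x_i+y_i)/(1-r)+2\sqrt{rx_iy_i}/(1-r)$ can be optimized in $r$. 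The main obstacle is that the supremum over $t$ does not factorize across coordinates, so a naive tensorization of one-dimensional bounds is ruled out; my remedy is to select, for each $(x,y)\in N^c$, a single optimizing time $r=r(x,y)$ from the one-dimensional analysis, insert it into the full product kernel, and then verify the integrability of the resulting envelope by estimating coordinatewise. A secondary obstacle is the behavior near the boundary $x_i=0$, where the density $x_i^{\a_i}$ is singular for $\a_i\in(-1,0)$ and $I_{\a_i}$ is essentially constant, which requires separate bookkeeping of the factors $(x_iy_i)^{-\a_i/2}I_{\a_i}(\cdots)$ in that regime.
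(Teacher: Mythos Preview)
Your local part is in the right spirit and essentially matches what the paper does (after a convenient change of variables $x_i\mapsto x_i^2$, $t\mapsto t(s)$, the local piece is controlled by the Hardy--Littlewood maximal operator for the doubling measure $d\eta_\a(x)=x^{2\a+\1}\,dx$ via a standard covering argument). One remark: the paper's local region is the Euclidean ball $B(x,m(x))$ with $m(x)=1/(|x|+1)$, not a coordinatewise box; this is what later enables the global analysis.

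The global part, however, has a genuine gap. First, the hope that the global piece might satisfy a \emph{strong} $L^1$ bound is misplaced: already in the Hermite/Ornstein--Uhlenbeck prototype the global maximal operator is only weak type $(1,1)$, and the same is true here. So you must aim for a weak type inequality directly. Second, and more seriously, your proposed remedy---selecting an optimizing $r(x,y)$ from a one-dimensional analysis and then ``verifying integrability of the resulting envelope coordinatewise''---does not work in dimension $d\ge 2$. The envelope you obtain still couples all coordinates through the common $r$, and the $d\mu_\a$-integral in $y$ (or $x$) does not reduce to a product of one-dimensional integrals; there is no mechanism in your sketch to handle this.

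The paper's idea for the global part is quite different and is the missing ingredient. After the change of variables, one proves (Lemma~2.6) that for $y\notin B(x,m(x))$ the kernel $K_s^\a(x,y)$ is bounded, uniformly in $s$ and in $|y|$, by an expression depending only on $x$ and on the \emph{angle} $\theta=\theta(x,y)$ between $x$ and $y$:
\[
K_s^\a(x,y)\lesssim e^{|x|^2}\Big[(|x|\theta)^{-d}\prod_i(x_i+|x|\theta)^{-2\a_i-1}\Big]\wedge\Big[(|x|+1)^d\prod_i\big(x_i+\tfrac{1}{|x|+1}\big)^{-2\a_i-1}\Big].
\]
This ``angular'' envelope is the key: one then runs a level-set argument in polar coordinates. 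One picks $z_0$ of minimal norm in the super-level set $\{U^\a f\ge\lambda\}$, disposes of $\{|z|\ge 2|z_0|\}$ by a crude measure estimate, and on the ring $|z_0|\le|z|\le 2|z_0|$ integrates first radially (using Lemma~2.4 on $\int_a^\infty r^\gamma e^{-r^2}\,dr$) and then over the sphere $S_+^{d-1}$, where the angular bound and Lemma~2.5(b) give a finite dyadic sum. This spherical mechanism is what replaces the impossible coordinatewise factorization and yields the weak type estimate.
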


In the case $d=1$, this was proved by Muckenhoupt \cite{Mu} by a rather elementary
analysis. For higher dimensions and in the diagonal case $\a = (\a_0,\ldots,\a_0)$,
Theorem \ref{thm:max_Lcl} was shown by Dinger \cite[Theorem 1]{Di}. 
Her proof is lengthy and quite technical. 
More recently Sasso \cite{Sa4}, using some ideas implemented earlier in the Hermite (Ornstein-Uhlenbeck)
context by other authors \cite{GMMST}, gave another proof of Theorem \ref{thm:max_Lcl} under the restriction
$\a\in [0,\infty)^d$. Actually, her proof is written in the one-dimensional case, but it is indicated
that the result can be extended to higher dimensions
and then the arguments needed are merely sketched (see \cite[Remarks 2.2 and 4.6]{Sa4}). 
The main tool in \cite{Sa4} is Schl\"afli's Poisson type integral representation
for the Bessel function entering the kernel, which makes the analysis of ${T}_{*}^\a$ rather long
and technical. In particular, the local balls defined in \cite{Sa4} are more complicated than those 
introduced in \cite{GMMST} and depend on an additional parameter coming from the Bessel function 
representation.

Our method of proving Theorem \ref{thm:max_Lcl} is based on the strategy presented in \cite{GMMST},
see also \cite{A}, but is considerably more
involved than in the original Hermite framework. Nevertheless, the reasoning we give seems to be
simpler and more transparent than the proofs mentioned above. 
It is perhaps worth pointing out that
the analogue of Theorem \ref{thm:max_Lcl} in the Hermite context was proved in at least four different ways.
This was done for the first time by one of the authors \cite{Sj} and then in \cite{MRS,GMMST,AFS}.

The proof of Theorem \ref{thm:max_Lcl} is contained in the subsections which follow.

\subsection{Some notation and technical preparation}
We let
\begin{align*}
S^{d-1} & = \{ x \in \R^d : |x| = 1 \}, \qquad \textrm{(unit sphere of dimension $d-1$)}, \\
\sigma & \equiv \textrm{natural (non-normalized) spherical measure on $S^{d-1}$},\\
\tilde{x} & = x/|x|, \qquad x \in \R^d \setminus \{0\}, \qquad \textrm{(projection onto $S^{d-1}$)},\\
d(\xi,w) & = \arccos \langle \xi , w \rangle, \qquad \xi,w \in S^{d-1}, \qquad 
\textrm{(geodesic distance on $S^{d-1}$)}, \\
\t(x,y) & = d(\tilde{x},\tilde{y}), \qquad x,y \in \R^d \setminus \{0\}, \qquad
	\textrm{(angle between $x$ and $y$)},\\
S^{d-1}_+ & = S^{d-1} \cap  \RRo,\\
c^+(\xi,r) & = \big\{ w \in S^{d-1}_+ : d(w,\xi) < r \big\}, 
\qquad \xi \in  S^{d-1}_+, \quad r > 0.
\end{align*}
Note that the geodesic and Euclidean distances on $S^{d-1}$ are equivalent,
\begin{align*}
\frac{2}{\pi} d(\xi,w) \le |\xi - w| \le d(\xi,w), 
\qquad \xi,w \in S^{d-1}.
\end{align*}

Next, we collect several technical lemmas needed in the sequel.
\begin{lem}\label{lem:L3}
Let $\gamma \in \mathbb{R}$ and $C>0$ be fixed. Then
\[
(ab + 1)^\gamma \exp(-C(b-a)^2) 
\lesssim
(a+1)^{2\gamma}, \qquad a,b \ge 0.
\]
\end{lem}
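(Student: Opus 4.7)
The plan is to split into cases according to the sign of $\gamma$ and the relative sizes of $a$ and $b$, so that in each regime either the polynomial factor $(ab+1)^\gamma$ alone suffices to give the bound, or the Gaussian factor $e^{-C(b-a)^2}$ decays fast enough to kill everything else.

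First I would treat $\gamma \ge 0$. When $b \le 2a$ one has $ab+1 \le 2a^2+1 \lesssim (a+1)^2$, so $(ab+1)^\gamma \lesssim (a+1)^{2\gamma}$ directly, and the exponential is harmlessly bounded by $1$. When $b > 2a$ the key observation is $b-a > b/2$, giving $e^{-C(b-a)^2} \le e^{-Cb^2/4}$; combined with $(ab+1)^\gamma \le (b^2/2+1)^\gamma \lesssim (b+1)^{2\gamma}$, the product is uniformly bounded in $b$, and since $\gamma\ge0$ we have $(a+1)^{2\gamma}\ge 1$, so the claim follows.

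Next, for $\gamma < 0$, I would argue as follows. If $b \ge a/2$, then $ab+1 \ge a^2/2+1 \gtrsim (a+1)^2$, and raising to the negative power $\gamma$ flips the inequality to yield $(ab+1)^\gamma \lesssim (a+1)^{2\gamma}$ without any need for the exponential. If instead $b < a/2$, then $a-b > a/2$, hence $e^{-C(b-a)^2} \le e^{-Ca^2/4}$; since $ab+1 \ge 1$ forces $(ab+1)^\gamma \le 1$, it only remains to check that $e^{-Ca^2/4} \lesssim (a+1)^{2\gamma}$, equivalently $(a+1)^{-2\gamma} \lesssim e^{Ca^2/4}$, which is clear because the left side grows polynomially in $a$ while the right side grows as a Gaussian.

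There is no genuine obstacle here: the lemma is a routine balancing of polynomial growth against Gaussian decay, and the only mild judgment call is the choice of threshold separating the two regimes (e.g.\ $b\le 2a$ versus $b>2a$, and $b \ge a/2$ versus $b<a/2$), which is calibrated so that in the "close" regime $ab$ is comparable to $a^2$ and in the "far" regime $|b-a|$ is comparable to $\max(a,b)$.
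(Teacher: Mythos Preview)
Your proof is correct and follows essentially the same approach as the paper: split according to whether $b$ is comparable to $a$, so that in the ``close'' regime $ab\simeq a^2$ handles everything, while in the ``far'' regime the Gaussian decay dominates. The paper's version is slightly more compact in that it does not separate by the sign of $\gamma$: it observes that when $b<a/2$ or $b>2a$ one has $(b-a)^2 \simeq (b+a)^2 \ge a^2+ab$, and this single estimate simultaneously provides the $e^{-c\,ab}$ decay needed to kill $(ab+1)^\gamma$ when $\gamma>0$ and the $e^{-c\,a^2}$ decay needed to produce $(a+1)^{2\gamma}$ when $\gamma<0$.
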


\begin{proof}
If $a/2 \le b \le 2a$, then $ab \simeq a^2$ and the conclusion follows.
In the opposite case, $(b-a)^2 \simeq (b+a)^2 \ge a^2 + ab$ and the asserted estimate again holds.
\end{proof}

\begin{lem}\label{lem:Gauss2}
Let $\kappa \le 0$, $\gamma \in \mathbb{R}$ and $c > 0$ be fixed and such that $\kappa + \gamma \le 0$. Then
\[
\sup_{t>0} t^\kappa (t + A)^{\gamma} \exp \left( - c \frac{z^2}{t} \right) 
\simeq
z^{2\kappa}(z^2 + A)^{\gamma},
\qquad A \ge 0, \quad z > 0.
\]
\end{lem}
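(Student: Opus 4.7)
For the lower bound, simply evaluate the expression at $t = z^2$: this gives $z^{2\kappa}(z^2+A)^\gamma e^{-c}$, which has the right form.

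For the upper bound, first use $t + A \simeq t \vee A$ (with constants depending only on $\gamma$) to reduce to estimating
\[
\sup_{t>0} t^\kappa (t \vee A)^{\gamma} \exp(-cz^2/t),
\]
and then split the supremum into two ranges, $t > A$ and $t \le A$. On the range $t > A$ the expression becomes $t^{\kappa+\gamma} e^{-cz^2/t}$. Since $\kappa + \gamma \le 0$, a direct calculus check (or the substitution $u = z^2/t$) shows that $\sup_{t>0} t^{\kappa+\gamma} e^{-cz^2/t} \simeq z^{2(\kappa+\gamma)}$, attained near $t \simeq z^2$. If $z^2 \ge A$ this matches the target $z^{2\kappa}(z^2+A)^\gamma \simeq z^{2(\kappa+\gamma)}$. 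If instead $z^2 < A$, the monotonicity of $t^{\kappa+\gamma}$ on $t > A$ (plus dropping the exponential) yields the stronger bound $A^{\kappa+\gamma}$, which is at most $z^{2\kappa}A^\gamma \simeq z^{2\kappa}(z^2+A)^\gamma$ since $\kappa \le 0$.

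On the range $t \le A$ the expression is $A^\gamma t^\kappa e^{-cz^2/t}$. Writing $u = z^2/t$ gives $t^\kappa e^{-cz^2/t} = z^{2\kappa} u^{-\kappa} e^{-cu}$ with $u \ge z^2/A$; using $-\kappa \ge 0$ and absorbing a sliver of the exponential, $u^{-\kappa} e^{-cu} \lesssim e^{-cu/2}$. This yields the bound $z^{2\kappa} A^\gamma e^{-cz^2/(2A)}$ throughout $t \le A$. When $A \ge z^2$ the exponential is harmless and $A^\gamma \simeq (z^2+A)^\gamma$, so we are done. When $A < z^2$, set $s = z^2/A > 1$ and write $A^\gamma e^{-cz^2/(2A)} = z^{2\gamma} s^{-\gamma} e^{-cs/2} \lesssim z^{2\gamma}$, where boundedness of $s^{-\gamma} e^{-cs/2}$ on $s \ge 1$ holds for every sign of $\gamma$; this again matches $z^{2\kappa}(z^2+A)^\gamma \simeq z^{2(\kappa+\gamma)}$.

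The only subtlety I anticipate is that trading $A$ for $z^2$ (or vice versa) inside the factor $A^\gamma$ can fail when done naively, because the sign of $\gamma$ is unrestricted. The point is that the exponential factor $\exp(-cz^2/A)$ generated from forcing $t \le A$ in the range $A < z^2$ precisely compensates the bad sign case $\gamma < 0$; tracking this is the one place where a small case split on $\gamma$ is essential.
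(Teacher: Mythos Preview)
Your proof is correct. Both your argument and the paper's share the same overall strategy: the lower bound comes from evaluating at $t=z^2$, and the upper bound is an elementary case analysis using monotonicity and the trick of absorbing powers into the exponential. The implementations differ in where the split happens. You first replace $t+A$ by $t\vee A$, split the supremum at $t=A$, and then in each half make a secondary case distinction on whether $z^2\gtrless A$. The paper instead splits once at $t=z^2$: for $t\ge z^2$ the exponential is harmless and one checks directly that $t^\kappa(t+A)^\gamma$ is nonincreasing (its logarithmic derivative has sign $(\kappa+\gamma)t+\kappa A\le 0$); for $t<z^2$ the exponential is estimated by $C(z^2/t)^\kappa$ when $\gamma\ge 0$ and by $C(z^2/t)^{\kappa+\gamma}$ when $\gamma<0$, after which the remaining expressions $z^{2\kappa}(t+A)^\gamma$ and $z^{2(\kappa+\gamma)}(1+A/t)^\gamma$ are each nondecreasing in $t$ and agree with the target at $t=z^2$. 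The paper's route is a bit shorter since it avoids the secondary $z^2$ versus $A$ split, but both are equally elementary and valid.
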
 

\begin{proof}
We let  $f(t)$ be the function in the supremum, and observe that it is enough to show that
$f(t)\lesssim f(z^2)$ for all $t > 0$. For  $t \ge z^2$ this follows since then
$f(t) \simeq t^\kappa (t + A)^{\gamma}$ and the last
expression is non-increasing in $t$. When $t < z^2$, we estimate the exponential factor by
const\,$\cdot (z^2/t)^\kappa$ if $\gamma \ge 0$
and by const\,$\cdot (z^2/t)^{\kappa + \gamma}$ if $\gamma < 0$.
Thus $f(t)$ is controlled by $z^{2\kappa}(t+A)^\gamma$ 
and $z^{2\kappa+2\gamma}(1+A/t)^\gamma$, respectively,
and both these expressions are non-decreasing in $t$ and 
agree with the right-hand side in the lemma for  $t = z^2$. 
\end{proof}

The following simple observation will be useful. Given $\gamma > -1$, we have
\begin{equation}\label{comp1}
\int_a^b t^\gamma \, dt \simeq (b-a) b^\gamma, \qquad b > a \ge 0.
\end{equation}

\begin{lem}\label{lem:F8}
Let $\gamma > -1$ be fixed. Then
\[
\int_a^b x^\gamma e^{-x^2} \, dx 
\simeq
\Big[ (b-a) \wedge \frac{1}{a+1} \Big] 
\big[ b \wedge (a + 1) \big]^{\gamma} e^{-a^2},
\qquad  0 \le a \le b \le \infty.
\]
\end{lem}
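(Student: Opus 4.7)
My plan is to split the interval $[a,b]$ into a local piece near $a$ of length at most $1/(a+1)$, where the Gaussian factor is essentially constant, and a possibly-present tail piece that I control via a single uniform estimate on the full Gaussian integral,
\[
J(a) := \int_a^\infty x^\gamma e^{-x^2}\,dx \lesssim (a+1)^{\gamma-1} e^{-a^2}, \qquad a \ge 0. \quad (\ast)
\]
The assertion will then follow from \eqref{comp1} combined with $(\ast)$.

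I would first prove $(\ast)$. For $a$ in any bounded range it is immediate, since both sides are of order one. For large $a$, the integration-by-parts identity
\[
J(a) = \tfrac{1}{2} a^{\gamma-1} e^{-a^2} + \tfrac{\gamma-1}{2} \int_a^\infty x^{\gamma-2} e^{-x^2}\,dx,
\]
obtained from $x^\gamma e^{-x^2} = -(x^{\gamma-1}/2)(e^{-x^2})'$, yields $(\ast)$ directly when $\gamma \le 1$ (the second summand is non-positive), and when $\gamma > 1$ the bound $x^{\gamma-2} \le a^{-2} x^\gamma$ on $[a,\infty)$ allows the second summand to be absorbed into $J(a)$ once $a$ is sufficiently large.

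With $(\ast)$ in hand, I distinguish two cases. In the \emph{short} case $b \le a + 1/(a+1)$, one has $(b-a)\wedge 1/(a+1) = b-a$ and $b \wedge (a+1) = b$, while $x^2 - a^2 \le (b-a)(a+b) \le 3$ on $[a,b]$; hence $e^{-x^2} \simeq e^{-a^2}$ there and, pulling this factor outside, the claim reduces to $\int_a^b x^\gamma\,dx \simeq (b-a) b^\gamma$, which is \eqref{comp1}. In the \emph{long} case $b > a + 1/(a+1)$, one has $(b-a) \wedge 1/(a+1) = 1/(a+1)$; a small calculation (minimizing $a + 2/(a+1)$) shows $a + 1/(a+1) \ge (a+1)/2$, whence $b \wedge (a+1) \simeq a+1$ and the right-hand side reduces to $(a+1)^{\gamma-1} e^{-a^2}$. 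The lower bound then follows from the short-interval case applied to the sub-interval $[a, a+1/(a+1)] \subseteq [a,b]$, while the upper bound follows from $\int_a^b x^\gamma e^{-x^2}\,dx \le J(a)$ together with $(\ast)$. The main obstacle is the uniform estimate $(\ast)$, which has to cover all $a \ge 0$ and all $\gamma > -1$ on the same footing; once it is available, everything else is bookkeeping.
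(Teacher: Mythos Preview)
Your proof is correct and follows essentially the same approach as the paper's: both split into the short case $b \le a + 1/(a+1)$, handled via \eqref{comp1}, and the long case, where the lower bound comes from the short sub-interval and the upper bound from the tail estimate $\int_a^\infty x^\gamma e^{-x^2}\,dx \lesssim (a+1)^{\gamma-1}e^{-a^2}$. The only difference is how that tail estimate is obtained: the paper argues softly, noting that the ratio of the two sides is a positive continuous function of $a$ with a finite positive limit as $a\to\infty$ by L'H\^opital, whereas you give an explicit integration-by-parts argument; both are equally short and valid.
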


\begin{proof}
The case $b=\infty$ is easy, since the quotient between the two sides in question is a positive
continuous function of $a \ge 0$ having a finite and positive limit when $a \to \infty$,
by L'H\^opital's rule. Therefore we assume $b < \infty$.
It is convenient to distinguish two cases.

\noindent \textbf{Case 1:} $b \le a + \frac{1}{a+1}$.
In this situation $e^{-x^2} \simeq e^{-a^2}$, $x \in [a,b]$,
and by \eqref{comp1} the integral in question is comparable with 
$(b-a) b^{\gamma} e^{-a^2}$. Now a simple analysis leads to the required relation.

\noindent \textbf{Case 2:} $b > a + \frac{1}{a+1}$.
We have $b \ge 1$. Split the region of integration into 
$(a, a + \frac{1}{a+1})$ and $(a + \frac{1}{a+1} , b)$ denoting the corresponding integrals by $I_1$
and $I_2$, respectively. In view of Case 1, we have $I_1 \simeq (a + 1)^{\gamma - 1} e^{-a^2}$.
Therefore to finish the proof it suffices to show that 
$I_2 \lesssim (a + 1)^{\gamma - 1} e^{-a^2}$.
This, however, follows from the lemma with $b=\infty$ and the fact that
$a + \frac{1}{a+1} \simeq a + 1$.
\end{proof}

\begin{lem}\label{lem:L7L9L10}
Let $d \ge 2$ and $\gamma \in (-1,\infty)^d$ be fixed. Then
\begin{itemize}
\item[(a)] 
\[
\int_{c^+(\xi,r)} w^\gamma \, d\sigma(w)
\simeq
r^{d-1} \prod_{i=1}^d (\xi_i + r)^{\gamma_i}, 
\qquad \xi \in S^{d-1}_+, \quad 0< r \le 2\pi,
\]
\item[(b)] 
\[
\int_{c^+(\xi,r)} \Big( \prod_{i=1}^d (w_i + r)^{- \gamma_i} \Big) 
w^\gamma \, d\sigma(w)
\simeq
r^{d-1}, 
\qquad \xi \in S^{d-1}_+, \quad 0< r \le 2\pi,
\]
\item[(c)] 
\[
\int_{a \le |z| \le b}
z^\gamma e^{-|z|^2} \, dz
\simeq
\Big[ (b-a) \wedge \frac{1}{a+1} \Big] 
\big[ b \wedge (a + 1) \big]^{\langle \gamma \rangle + d -1} e^{-a^2}
\]
uniformly in $0 \le a \le b \le \infty$.
\end{itemize}
\end{lem}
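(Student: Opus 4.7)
The plan is to dispatch (c) first via polar coordinates (reducing it to Lemma~\ref{lem:F8}), then to derive (b) from (a), leaving the spherical cap estimate in (a) as the core technical step.

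For part (c), switch to polar coordinates $z = \rho w$ with $\rho = |z|$ and $w = z/|z| \in S^{d-1}_+$. Since $z^\gamma = \rho^{\langle\gamma\rangle} w^\gamma$ and the Jacobian is $\rho^{d-1}$, the integral factors as
\[
\int_{a \le |z| \le b} z^\gamma e^{-|z|^2} \, dz
= \bigg( \int_{S^{d-1}_+} w^\gamma \, d\sigma(w) \bigg)
\int_a^b \rho^{\langle\gamma\rangle + d - 1} e^{-\rho^2} \, d\rho.
\]
The spherical factor is a positive finite constant since $\gamma_i > -1$ for every $i$. The radial exponent satisfies $\langle \gamma \rangle + d - 1 > -1$, so Lemma~\ref{lem:F8} applies to the remaining one-variable integral and delivers the claimed comparability.

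For part (a), recall that $\sigma(c^+(\xi,r)) \simeq r^{d-1}$ is standard, and that $|w-\xi| \lesssim d(w,\xi) \le r$ on the cap. Split the indices into $I_{\mathrm{big}} = \{ i : \xi_i \ge 2r\}$ and $I_{\mathrm{small}} = \{ i : \xi_i < 2r\}$. For $i \in I_{\mathrm{big}}$ we have $w_i \simeq \xi_i \simeq \xi_i + r$ uniformly on the cap, so the corresponding factors $w_i^{\gamma_i}$ may be replaced by $(\xi_i + r)^{\gamma_i}$ and pulled out of the integral. The task reduces to showing
\[
\int_{c^+(\xi,r)} \prod_{i \in I_{\mathrm{small}}} w_i^{\gamma_i} \, d\sigma(w)
\simeq r^{d-1} \prod_{i \in I_{\mathrm{small}}} r^{\gamma_i},
\]
which is the main obstacle and is handled by local coordinates on the cap. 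Taking $(w_i)_{i \in I_{\mathrm{small}}}$ as parameters, the region in these variables is a subset of $[0,Cr]^{|I_{\mathrm{small}}|}$ of Lebesgue measure $\simeq r^{|I_{\mathrm{small}}|}$, while the remaining parameters sweep out a factor $\simeq r^{d-1-|I_{\mathrm{small}}|}$. One-dimensional integrations $\int_0^{Cr} w_i^{\gamma_i}\, dw_i \simeq r^{\gamma_i+1}$ in the small-index variables (valid since $\gamma_i > -1$) then combine to give precisely the required comparability.

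For part (b), use the same dichotomy of indices. When $i \in I_{\mathrm{big}}$, $w_i + r \simeq w_i$ on the cap, so $(w_i+r)^{-\gamma_i} w_i^{\gamma_i} \simeq 1$ and these factors may be discarded. When $i \in I_{\mathrm{small}}$, $w_i \le \xi_i + r \lesssim r$, hence $w_i + r \simeq r$ and $(w_i+r)^{-\gamma_i} \simeq r^{-\gamma_i}$ is essentially a constant, to be pulled out of the integral. What remains is
\[
\prod_{i \in I_{\mathrm{small}}} r^{-\gamma_i} \int_{c^+(\xi,r)} \prod_{i \in I_{\mathrm{small}}} w_i^{\gamma_i} \, d\sigma(w),
\]
and applying part (a) with exponents $\gamma_i$ for $i \in I_{\mathrm{small}}$ and $0$ for $i \in I_{\mathrm{big}}$ yields $\prod_{i \in I_{\mathrm{small}}} r^{-\gamma_i} \cdot r^{d-1} \prod_{i \in I_{\mathrm{small}}} (\xi_i+r)^{\gamma_i} \simeq r^{d-1}$, using $\xi_i + r \simeq r$ for $i \in I_{\mathrm{small}}$.
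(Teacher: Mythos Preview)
Your argument is correct and follows the paper's approach: polar coordinates reduce (c) to Lemma~\ref{lem:F8}, and the same big/small index splitting reduces (b) to an application of (a). For (a) the paper simply refers to \cite[pp.\,107--109]{DaiXu} rather than giving a proof, whereas you sketch a local-coordinate argument that has the right shape (the lower bound in the reduced cap integral could use a touch more care, but this is no more than what the paper itself supplies).
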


\begin{proof}
Item (a) is a slight modification of \cite[(5.1.9)]{DaiXu}, the difference is that here we integrate over
subsets of $S_+^{d-1}$ and allow a wider range of $\gamma$. Since the proof is essentially a repetition of 
the arguments given in \cite[pp.\,107--109]{DaiXu}, we leave it to the reader.

Next, observe that part (c) is a straightforward consequence of Lemma~\ref{lem:F8}. Indeed, integration in 
polar coordinates produces
\begin{align*}
\int_{a \le |z| \le b}
z^\gamma e^{-|z|^2} \, dz
=
\bigg( \int_a^b t^{\langle \gamma \rangle + d -1} e^{-t^2} \, dt \bigg)
\bigg( \int_{S_+^{d-1}} w^\gamma \, d\sigma(w) \bigg),
\end{align*}
which together with Lemma~\ref{lem:F8} implies the required estimate.

Finally, we prove item (b). Let
\[
I = \{i : \xi_i > 2r\}, \qquad J = \{i : \xi_i \le 2r\}.
\]
Notice that for $w \in c^+(\xi,r)$ we have
\begin{align*}
\xi_i/2 < \xi_i - r < \xi_i - |w_i - \xi_i| \le w_i 
& \le \xi_i + |w_i - \xi_i| < \xi_i + r < 3\xi_i/2, 
\qquad i \in I, \\
w_i & \le \xi_i + |w_i - \xi_i| < \xi_i + r \le 3r, 
\qquad i \in J,
\end{align*}
which means that $w_i + r \simeq \xi_i \simeq w_i$ for $i \in I$ 
and $\xi_i + r \simeq w_i + r \simeq r$ for $i \in J$.
This leads to
\begin{align*}
\int_{c^+(\xi,r)} \Big( \prod_{i=1}^d (w_i + r)^{- \gamma_i} \Big) 
w^\gamma \, d\sigma(w)
\simeq
\Big( \prod_{i \in J} r^{- \gamma_i} \Big)
\int_{c^+(\xi,r)} \Big( \prod_{i \in J} w_i^{\gamma_i} \Big) 
\, d\sigma(w).
\end{align*}
Now an application of part (a) shows that the expression in question is comparable with 
\[
\Big( \prod_{i \in J} r^{- \gamma_i} \Big) r^{d-1} 
\Big( \prod_{i \in J} (\xi_i + r)^{\gamma_i} \Big) 
\simeq r^{d-1},
\]
and the proof of part (b) is finished.
\end{proof}

\subsection{Reformulation, reduction and the main splitting} \label{ssec:rrs}

For any $\a \in \R^d$ define the measures $\nu_{\a}$ and $\eta_{\a}$ in $\R^d_+$ by
$$
d\nu_{\a}(x) = x^{2\a+\1} e^{-|x|^2}\, dx, \qquad d\eta_{\a}(x) = x^{2\a+\1} \, dx.
$$
In the remaining part of Section \ref{sec:Lagcls} we always assume $\a \in (-1,\infty)^d$.

We now make the changes of variables $x \mapsto xx$,
$y \mapsto yy$, let $t=t(s) = 2\log\frac{1+s}{1-s}$, $s \in (0,1)$
(equivalently, $s=\tanh(t/4)$) and eliminate the Bessel function by means of the standard bounds
for $I_{\nu}$ with $\nu>-1$ (see \cite[(5.16.4) and (5.16.5)]{Leb}),
\begin{align}\label{Basy*}
I_\nu (z) \simeq z^\nu (z + 1)^{-\nu -1/2} e^z, \qquad z > 0.
\end{align}
From \eqref{sLKer} we then infer that
\begin{align*}
G^{\a}_{t(s)}\big(xx,yy\big) & \simeq s^{-d/2} \prod_{i=1}^d 
	\big[ (1-s)x_i y_i + s \big]^{-\a_i-1/2} \exp\bigg( |x|^2-\frac{|(1+s)x-(1-s)y|^2}{4s}\bigg) \\
	& =:\mathcal{G}_s^{\a}(x,y)
\end{align*}
uniformly in $x,y \in \R^d_+$ and $s\in (0,1)$. Thus the weak type $(1,1)$ of $T_*^{\a}$ with respect
to $\mu_{\a}$ (stated in Theorem \ref{thm:max_Lcl}) is equivalent to the weak type $(1,1)$ with respect to
$\nu_{\a}$ of the maximal operator
$$
f(x) \mapsto \sup_{0<s<1} \bigg| \int_{\R^d_+} \mathcal{G}_s^{\a}(x,y) f(y) \, d\nu_{\a}(y) \bigg|.
$$

Using Lemma~\ref{lem:L3} (specified to $\gamma = - \a_i -1/2$, $C=1/8$, 
$a = \frac{(1+s) x_i}{\sqrt{s}} \simeq \frac{ x_i}{\sqrt{s}}$ and
$b = \frac{(1-s) y_i}{\sqrt{s}}$, $i=1,\ldots,d$) we see that $\mathcal{G}_s^{\a}(x,y)$
is controlled by the kernel
$$
K_s^{\a}(x,y) := s^{-d/2} \prod_{i=1}^d \big(x_i+\sqrt{s}\,\big)^{-2\a_i-1}
	\exp\bigg( |x|^2-\frac{|(1+s)x-(1-s)y|^2}{8s}\bigg).
$$
This leads us to the maximal operator
$$
K^{\a}_*f(x) = \sup_{0<s<1} \int_{\R^d_+} K_s^{\a}(x,y)f(y)\, d\nu_{\a}(y), 
	\qquad x \in \R^d_+, \quad 0 \le f \in L^1(d\nu_{\a}).
$$
The following result obviously implies Theorem \ref{thm:max_Lcl}.
\begin{thm} \label{thm:max_Lclr}
Let $d \ge 1$ and $\a \in (-1,\infty)^d$. Then
$$
\nu_{\a}\big\{ x \in \R^d_+ : K^{\a}_*f(x) > \lambda \big\} 
	\le \frac{C}{\lambda} \int_{\R^d_{+}}f(x)\, d\nu_{\a}(x), 
		\qquad \lambda>0, \quad 0 \le f \in L^1(d\nu_{\a}),
$$
with a constant $C$ independent of $\lambda$ and $f$.
\end{thm}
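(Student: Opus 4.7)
The plan is to follow the local/global decomposition strategy developed by Garcia-Cuerva, Mauceri, Meda, Sjögren and Torrea for the Ornstein-Uhlenbeck maximal operator, adapted to the Laguerre-type kernel $K^\a_s$. Introduce an admissibility scale $m(x) \simeq 1\wedge|x|^{-1}$ at $x\in\R^d_+$ and the local set
\[
L_\gamma = \big\{(x,y)\in (\R^d_+)^2 : |x-y| < \gamma\, m(x)\big\}
\]
for a small fixed $\gamma>0$. Split $K^\a_s = K^\a_s\,\chi_{L_\gamma} + K^\a_s\,\chi_{L_\gamma^c} =: K^{\a,\loc}_s + K^{\a,\glob}_s$. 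It then suffices to establish the weak type $(1,1)$ bound separately for the corresponding maximal operators $K^{\a,\loc}_*$ and $K^{\a,\glob}_*$.

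For the local part, the key is a pointwise estimate of the form
\[
K^\a_s(x,y)\, y^{2\a+\1} \, e^{-|y|^2} \;\lesssim\; \frac{1}{\nu_\a\big(B(x,\,\sqrt{s}+|x-y|)\big)}\,
  \exp\!\Big(-c\,\frac{|x-y|^2}{s}\Big),\quad (x,y)\in L_\gamma,\ s\in(0,1),
\]
with a universal $c>0$. Note that on $L_\gamma$ one has $||x|^2-|y|^2| \le |x-y|(|x|+|y|) \lesssim 1$, so the factor $e^{|x|^2-|y|^2}$ coming from $K^\a_s$ is bounded; the anisotropic Bessel weight $\prod_i (x_i+\sqrt s)^{-2\a_i-1}$ and the coordinate factor $y^{2\a+\1}$ then recombine, via coordinatewise casework together with the sharp volume estimate of Lemma~\ref{lem:L7L9L10}(c), into the reciprocal $\nu_\a$-volume of the Euclidean ball of radius $\sqrt s+|x-y|$ centred at $x$. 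Once this pointwise bound is in hand, taking $\sup_{s\in(0,1)}$ controls $K^{\a,\loc}_*f(x)$ by a local Hardy-Littlewood maximal operator with respect to $\nu_\a$, which is of weak type $(1,1)$ by a standard Calderón-Zygmund argument on the locally doubling metric measure space $(\R^d_+,\nu_\a,|\cdot|)$ (local doubling is guaranteed by $\a_i>-1$).

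For the global part, outside $L_\gamma$ the exponent $|x|^2-|(1+s)x-(1-s)y|^2/(8s)$ is sufficiently negative that Lemma~\ref{lem:Gauss2} can be applied to compute $\sup_{s\in(0,1)}K^{\a,\glob}_s(x,y)$ pointwise, reducing $K^{\a,\glob}_*$ to an integral operator with an explicit kernel $\mathcal{K}(x,y)$. The target is then the dual strong type bound
\[
\int_{\R^d_+} \mathcal{K}(x,y)\,d\nu_\a(x) \;\lesssim\; 1 \qquad \text{uniformly in } y\in\R^d_+,
\]
which I intend to verify by integration in polar coordinates centred at the origin, separating the sphere $S^{d-1}_+$ from the radial variable and using Lemma~\ref{lem:L7L9L10}(a) and (c) to control the angular and radial contributions respectively. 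The natural split is between the regime $|x|\simeq |y|$, where after optimizing in $s$ the Gaussian decay in $|(1+s)x-(1-s)y|$ supplies the required smallness, and the regime $|x|\not\simeq |y|$, where $|x-y|\simeq |x|+|y|$ and a power-type decay in $1+||x|^2-|y|^2|$ already suffices.

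The main obstacle is the local pointwise estimate. The Bessel-derived factor $\prod_i(x_i+\sqrt s)^{-2\a_i-1}$ and the weight $y^{2\a+\1}$ generate a genuinely anisotropic scaling that depends jointly on the Gaussian scale $\sqrt s$ and on the distance of $x$ to each coordinate hyperplane; when $\a_i\in(-1,-1/2)$ the weight is even singular on $\partial\R^d_+$, so the two effects must be balanced directionwise with some care. The joint use of Lemma~\ref{lem:L3} to absorb Gaussian factors into polynomial ones, of Lemma~\ref{lem:F8} to localize the radial integration, and of Lemma~\ref{lem:L7L9L10}(c) to compute $\nu_\a$-measures of Euclidean balls is precisely what is needed to make this balance work uniformly in $\a\in(-1,\infty)^d$.
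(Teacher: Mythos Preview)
Your local/global split and your treatment of the local part are essentially the paper's approach (the paper phrases it as comparing $\nu_\a$ with the doubling power measure $\eta_\a$ on balls of radius $\lesssim m(x)$ and then invoking the Hardy--Littlewood maximal function for $\eta_\a$ via a covering argument, but this is equivalent to your local doubling formulation). One small slip: Lemma~\ref{lem:L7L9L10}(c) computes $\nu_\a$-measures of \emph{annuli centred at the origin}, not of Euclidean balls centred at an arbitrary $x$; for the latter you want the formula \eqref{Bball} for $\eta_\a(B(x,r))$ together with \eqref{proexp}.

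The real gap is in the global part. Your plan is to dominate $\sup_s K_s^{\a,\glob}(x,y)$ by an explicit kernel $\mathcal K(x,y)$ and then prove the \emph{dual strong type} bound $\int \mathcal K(x,y)\,d\nu_\a(x)\lesssim 1$. This fails already in dimension $d=1$ with $\a=0$. Fix $y$ large and take $x\in(y/2,y)$ with $y-x=\delta\in(1/y,y/2)$; the supremum in $s$ is attained near $s_0=(y-x)/(x+y)\simeq \delta/y$, where the Gaussian exponent vanishes, and one computes $\mathcal K(x,y)\simeq (\delta y)^{-1/2}e^{x^2}$. Integrating against $d\nu_0(x)=x\,e^{-x^2}\,dx$ over this range gives
\[
\int_{1/y}^{y/2}(\delta y)^{-1/2}\,(y-\delta)\,d\delta \;\simeq\; y^{1/2}\cdot y^{1/2}\;=\;y,
\]
which blows up as $y\to\infty$. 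So the global maximal operator is \emph{not} of strong type $(1,1)$; it is genuinely only weak type, exactly as in the Ornstein--Uhlenbeck setting. The paper proceeds instead by bounding $\sup_s K_s^\a(x,y)$ on the global region by a kernel depending on $y$ only through the angle $\theta(x,y)$ (Lemma~\ref{lem:globest}), and then proves the weak type $(1,1)$ for the resulting operator $U^\a$ by a direct level-set argument: one locates the minimal radius $r_0$ at which the level set is reached, disposes of $\{|x|\ge 2r_0\}$ by crude measure estimates, and on the annulus $r_0\le|x|\le 2r_0$ passes to polar coordinates, introduces the first-hitting radius $r(w)$ on each ray, and uses Lemma~\ref{lem:L7L9L10}(b) to close the spherical integral. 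Your outline needs to replace the (false) strong type step by an argument of this kind.
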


To prove this, we follow a well-known general strategy and decompose $K^{\a}_*$ into its local and global
parts. Define
\[
m(x) = \frac{1}{|x| + 1}, \qquad x \in \RR.
\]
Our local balls will be of the type $B(x, a m(x))$, where $a > 0$ is fixed.
The crucial fact is that in such balls the measure $\nu_\a$ is proportional to the power measure
$\eta_{\a}$. More precisely, for $a > 0$ fixed, we have
\begin{align}\label{proexp}
e^{-|y|^2} \simeq e^{-|z|^2}, \qquad
d\nu_\a (y) \simeq e^{-|z|^2} d\eta_\a (y), \qquad y \in B(z, a m(z)), \quad z \in \RR.
\end{align} 
For further reference, notice that
\begin{align}\label{mest}
\frac{1}{a+1} \le \frac{m(x)}{m(x_0)} \le a + 1 
\textrm{\quad for \quad $|x - x_0| \le a$, \quad $a>0$.}
\end{align} 

The local and global parts of $K_*^\a$ are defined by
\begin{align*}
K_{*}^{\a,\loc} f(x) 
&= K_{*}^\a \big(f \chi_{B(x,m(x))} \big) (x)
= \sup_{0 < s < 1}  \int_{B(x,m(x))} K_s^{\alpha}(x,y) f(y) \, d\nu_\a(y), \\
K_{*}^{\a,\glob} f(x) 
& = K_{*}^\a \big(f \chi_{\RR \setminus B(x,m(x))} \big) (x)
= \sup_{0 < s < 1} \int_{\RR \setminus B(x,m(x))} K_s^{\alpha}(x,y) f(y) \, d\nu_\a(y).
\end{align*}
Clearly, it is enough to verify the weak type $(1,1)$ estimate for 
$K_{*}^{\a,\loc}$ and $K_{*}^{\a,\glob}$ separately. The treatment of $K_{*}^{\a,\loc}$ is relatively simple
since this operator can be controlled by means of the Hardy-Littlewood maximal function related to the
doubling measure $\eta_{\a}$. On the other hand, the analysis of $K_{*}^{\a,\glob}$
is non-standard and tricky.
As we shall see, $K_{*}^{\a,\glob}$ can be dominated by an integral operator
that turns out to be of weak type $(1,1)$.

\subsection{Treatment of the local part $K_{*}^{\a,\loc}$}
We aim at estimating the quantity
$e^{-|x|^2} K_s^{\alpha}(x,y)$ in a local ball by the so-called Gaussian bound related to the space
of homogeneous type $(\RR,\eta_\a,|\cdot|)$.
Treating the exponent in the definition of $K_s^{\alpha}(x,y)$, we get
\begin{align*}
|(1+s) x - (1-s) y|^2
& =
|(1+s) (x - y) +2sy|^2
= (1+s)^2 \Big|x - y + 2\frac{s}{1+s}y\Big|^2 \\ 
& \ge |x-y|^2 - 4s|\langle x-y, y \rangle| 
 \ge |x - y|^2 - 4s |y| m(x) \\ 
& \ge |x - y|^2 - 8s,
\end{align*}
provided that $s \in (0,1)$, $x \in \RR$ and $y \in B(x,m(x))$;
the last inequality above is a consequence of \eqref{mest} with $a=1$.
Combining this with the comparability (cf.\ \cite[Lemma 2.2]{NoSz})
\begin{align}\label{Bball}
\eta_\a \big( B(x,r) \big) \simeq
r^d \prod_{i=1}^d ( x_i + r )^{2\a_i + 1},
\qquad x \in \RR, \quad r > 0,
\end{align}
and \eqref{proexp} specified to $a=1$ and $z=x$, we obtain
\[
K_{*}^{\a,\loc} f(x) 
\lesssim
\sup_{0 < s < 1} \frac{1}{\eta_\a \big( B(x,\sqrt{s}) \big) } 
\int_{B(x,m(x))} \exp\Big( -\frac{|x - y|^2}{8s} \Big)
f(y) \, d\eta_\a(y).
\]
Since the measure $\eta_\a$ is doubling, it follows that
\begin{align}\label{est1}
K_{*}^{\a,\loc} f(x) 
\lesssim
M_\a (f \chi_{B(x,m(x))} )(x), 
\qquad x \in \RR, \quad 0 \le f \in L^1(d\nu_\a),
\end{align}
where $M_\a$ is the centered Hardy-Littlewood maximal operator associated with the space of homogeneous
type $(\RR,\eta_\a,|\cdot|)$. 
From the general theory, see \cite[Chapter 2]{H}, we know that $M_\a$ is bounded from
$L^1(d\eta_\a)$ to weak $L^{1}(d\eta_\a)$.

By a well-known covering type argument (see for example \cite[Lemma 3.2 on p.\,16]{A})
there exists a sequence of balls $B(q_k,m(q_k))$, $k=1,2,\ldots$, which cover $\RR$ and such that the 
larger concentric balls $B(q_k,3 m(q_k))$ have bounded overlap, i.e.
\[
\sum_{k=1}^\infty \chi_{B(q_k,3 m(q_k))} (y) \lesssim 1,
\qquad y \in \RR.
\]
Using \eqref{mest} with $a=1$ it is easy to check that
\[
B(x,m(x))
\subset B(q_k,3 m(q_k)) \textrm{\qquad for \quad $x \in B(q_k,m(q_k))$, \quad $k \ge 1$.}
\]
Consequently, by \eqref{est1} there exists a constant $C$ such that
\[
K_{*}^{\a,\loc} f(x) 
\le
C
M_{\a} (f\chi_{B(q_k,3 m(q_k))}) (x), 
\qquad x \in B(q_k,m(q_k)), \quad k\ge 1, \quad 0 \le f \in L^1(d\nu_\a).
\]

Now we are ready to conclude that $K_{*}^{\a,\loc}$ is of weak type $(1,1)$. Let $\lam > 0$ and
$0 \le f \in L^1(d\nu_\a)$. Applying the above estimate of $K_{*}^{\a,\loc} f(x)$
and then using \eqref{proexp} (with either $a=1$ or $a=3$, and $z=q_k$, $k \ge 1$)
and the fact that $M_\a$ is of weak type $(1,1)$ with respect to $\eta_\a$, we obtain
\begin{align*}
\nu_\a  \big\{x : K_{*}^{\a,\loc} f(x) > \lam \big\} 
& \le 
\sum_{k=1}^\infty
\nu_\a  \big\{x \in B(q_k,m(q_k)) : 
M_\a (f\chi_{B(q_k,3 m(q_k))}) (x) > \lam/C \big\}  \\
& \simeq 
\sum_{k=1}^\infty e^{-|q_k|^2}
\eta_\a  \big\{x \in B(q_k,m(q_k)) : 
M_\a (f\chi_{B(q_k,3 m(q_k))}) (x) > \lam/C \big\}  \\
& \lesssim
\sum_{k=1}^\infty e^{-|q_k|^2}
\frac{\|f \chi_{ B(q_k,3 m(q_k)) } \|_{L^1(d\eta_\a)}}{\lam}
\lesssim
\frac{\|f \|_{L^1(d\nu_\a)}}{\lam}.
\end{align*}
The conclusion follows.

\subsection{Analysis of the global part} \label{sec:Lc_glob}

We now focus on the more tricky operator $K_*^{\a,\glob}$.
To begin with, we prove a uniform estimate of $K^{\a}_s(x,y)$ outside local balls by an expression
independent of $s$, with dependence on $y$ only through the angle between $x$ and $y$, denoted $\t(x,y)$
or simply $\t$. Curiously enough, such a crude bound will be sufficient for our purpose.
\begin{lem}\label{lem:globest}
Let $d \ge 1$ and $\a \in (-1,\infty)^d$. The estimate
\begin{align}\label{globest}
K_s^{\alpha}(x,y)
\lesssim
e^{|x|^2} \bigg[ (|x|\t)^{-d} \prod_{i=1}^d (x_i + |x|\t)^{-2\a_i - 1} \bigg]
\wedge
\bigg[ (|x| + 1)^{d} \prod_{i=1}^d \Big(x_i + \frac{1}{|x|+1}\Big)^{-2\a_i - 1} \bigg]
\end{align}
holds uniformly in $s \in (0, 1)$, $x \in \RR$ and $y \in \RR \setminus B(x,m(x))$;
here $\t = \t (x,y)$ and the quantity in the first square bracket above is understood as
$\infty$ when $|x|\t = 0$.
\end{lem}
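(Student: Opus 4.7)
The plan is to write $K_s^{\a}(x,y)/e^{|x|^2} = h(s)\exp(-E/(8s))$, where
\[
h(s) = s^{-d/2}\prod_{i=1}^d(x_i+\sqrt{s})^{-2\a_i-1}, \qquad E = |(1+s)x-(1-s)y|^2,
\]
and to prove the two bounds in~\eqref{globest} separately. The common device is the comparison $(x_i+\sqrt{s})^{-2\a_i-1} \simeq (s+x_i^2)^{-\a_i-1/2}$, which lets one factor the kernel over $i$ and apply Lemma~\ref{lem:Gauss2} coordinate-wise with $\kappa = -1/2$, $\gamma = -\a_i-1/2$ (so that $\kappa+\gamma = -\a_i-1 \le 0$, thanks to $\a_i > -1$) and $A = x_i^2$. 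Once a uniform lower bound $E \gtrsim z^2$ is available in the relevant $s$-range, splitting the exponential as $\prod_i \exp(-cz^2/(ds))$ and taking the supremum in $s$ factor by factor yields
\[
h(s)\exp(-E/(8s)) \lesssim z^{-d}\prod_{i=1}^d(x_i+z)^{-2\a_i-1}.
\]

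For the first bracket in~\eqref{globest} I would take $z \simeq |x|\t$ and prove $E \gtrsim |x|^2\t^2$ uniformly in $s \in (0,1)$ and $y \in \RR$. Setting $r = (1+s)|x|$, $\rho = (1-s)|y|$ and using $1-\cos\t \gtrsim \t^2$, one has
\[
E = (r-\rho)^2 + 2r\rho(1-\cos\t) \gtrsim (r-\rho)^2 + r\rho\,\t^2,
\]
which gives the claim by splitting into $\rho \le r/2$ or $\rho \ge 2r$ (where $(r-\rho)^2 \gtrsim r^2 \ge |x|^2 \gtrsim |x|^2\t^2$, as $\t \le \pi$) and $r/2 < \rho < 2r$ (where $r\rho\,\t^2 \gtrsim |x|^2\t^2$).

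For the second bracket I split on the size of $s$. When $s \le c_0 m(x)^2$ with a suitably small $c_0 > 0$, the identity $E = |x-y|^2 + 2s(|x|^2-|y|^2) + s^2|x+y|^2$, the hypothesis $|x-y| \ge m(x)$, the bound $||y|^2-|x|^2| \le |x-y|(|x|+|y|)$, and a case split on whether $|y| \le 2|x|+1$ (so $|x|+|y| \lesssim 1/m(x)$) or $|y| > 2|x|+1$ (so $|x-y| \gtrsim 1$ and $|x|+|y| \lesssim |x-y|$) jointly yield $E \gtrsim m(x)^2$; Lemma~\ref{lem:Gauss2} with $z \simeq m(x)$ then delivers the required bound. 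For $s > c_0 m(x)^2$ I discard the exponential and prove directly that $h(s) \lesssim m(x)^{-d}\prod_i(x_i+m(x))^{-2\a_i-1}$: with $u = \sqrt{s}/m(x) \ge \sqrt{c_0}$ and $t_i = x_i/m(x) \ge 0$, the ratio takes the form $u^{-d}\prod_i((t_i+u)/(t_i+1))^{-2\a_i-1}$, and one must bound this uniformly in $u$ and in $t_i \ge 0$.

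This last uniform estimate is the principal obstacle. For $u \in [\sqrt{c_0},1]$ the variable $u$ ranges over a compact interval bounded away from $0$, and every factor is controlled by continuity. For $u \ge 1$ the factors with $\a_i \ge -1/2$ are $\le 1$, while those with $\a_i < -1/2$ are bounded by $u^{|2\a_i+1|}$ (the base $(t_i+u)/(t_i+1)$ is maximized at $t_i = 0$, where it equals $u$). The key saving is that $\a_i > -1$ forces $|2\a_i+1| < 1$ for every $i$ with $\a_i < -1/2$; since there are at most $d$ such indices, the total power of $u$ picked up from the product is strictly less than $d$, and the prefactor $u^{-d}$ absorbs it.
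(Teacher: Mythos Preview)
Your proof is correct and follows essentially the same architecture as the paper's: bound the angular part $E\gtrsim |x|^2\t^2$ and apply Lemma~\ref{lem:Gauss2} coordinate-wise for the first bracket, then split on the size of $s$ relative to $m(x)^2$ for the second. The differences are only in execution. For the small-$s$ case of the second bracket, the paper uses the simpler triangle-inequality estimate $|(1+s)x-(1-s)y|=|(1-s)(x-y)+2sx|\ge \tfrac{3}{4}|x-y|-2s|x|$, which avoids your two-case split on $|y|$. For the large-$s$ case, the paper observes directly that each factor $s\mapsto s^{-1/2}(x_i+\sqrt{s})^{-2\a_i-1}$ is monotonically decreasing (since $\a_i>-1$), so $h(s)\le h\big(\tfrac{1}{4(|x|+1)^2}\big)$ is immediate; this replaces your compactness-plus-power-counting argument with a one-line monotonicity observation and makes the ``principal obstacle'' disappear.
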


When $d=1$ the bound \eqref{globest} gives
$$
K_s^{\a}(x,y) \lesssim (x+1)^{-2\a}e^{x^2}.
$$
It is straightforward to check that the function $x\mapsto (x+1)^{-2\a}e^{x^2}$
belongs to weak $L^{1}(d\nu_{\a})$ and therefore in the one-dimensional case
$K_*^{\a,\glob}$ satisfies the weak type $(1,1)$ estimate. 

\begin{proof}[{Proof of Lemma \ref{lem:globest}}]
We first show that $K^{\a}_s(x,y)$ is controlled by $e^{|x|^2}$ times the first square bracket in 
\eqref{globest}. Since
\[
|w-z|^2 \ge |w|^2 \sin^2\t(w,z), \qquad w,z \in \R^d \setminus\{0\},
\]
we get
\begin{align}\label{est11}
|(1+s) x - (1-s) y|^2 \ge (1+s)^2 |x|^2 \sin^2\t
\simeq |x|^2 \t^2, \qquad x,y \in \RR, \quad s \in (0,1),
\end{align}
where $\t = \t(x,y) \in [0,\pi/2]$.
This estimate together with Lemma~\ref{lem:Gauss2} (applied with $\kappa = -1/2$, 
$\gamma = - \a_i -1/2$, $A=x_i^2$, $z = |x| \t$) shows that, for some $c > 0$,
the left-hand side in \eqref{globest} is controlled by
\begin{align*}
e^{|x|^2}\sup_{0 < s < 1} s^{-d/2} \prod_{i=1}^d (x_i + \sqrt{s})^{-2\a_i - 1}
\exp\Big( -c \frac{|x|^2 \t^2}{s} \Big) 
\lesssim e^{|x|^2}
(|x|\t)^{-d} \prod_{i=1}^d (x_i + |x|\t)^{-2\a_i - 1},
\end{align*}
uniformly in $x,y \in \RR$.
To finish the proof, it is enough to show that $K^{\a}_s(x,y)$ is controlled by $e^{|x|^2}$ times
the second square bracket in \eqref{globest}.

Let us first consider $s > \frac{1}{4(|x| + 1)^2}$. 
Since the function $s \mapsto s^{-1/2} (x_i + \sqrt{s})^{-2\a_i - 1}$ is decreasing,
we obtain the desired bound. In the opposite case $s \le \frac{1}{4(|x| + 1)^2} \le 1/4$ and
\begin{align*}
|(1+s) x - (1-s) y|
& = |(1-s) (x - y) +2sx|
\ge \frac{3}{4}|x-y| - 2s|x|
\ge \frac{3}{4 (|x| + 1)} - \frac{|x|}{2 (|x| + 1)^2}\\
& \ge \frac{1}{4 (|x| + 1)},
\qquad x \in \RR, \quad y \in \RR \setminus B(x,m(x)).
\end{align*}
This together with Lemma~\ref{lem:Gauss2} (specified to 
$\kappa = -1/2$, $\gamma = - \a_i -1/2$, $c = 1/(128d)$, $A=x_i^2$, 
$z = (|x| + 1)^{-1}$) produces the required estimate.
\end{proof}

Since the case $d=1$ is already done, in what follows we assume $d \ge 2$.
For $0 \le f \in L^1(d\nu_\a)$ and $x \in \RRo$ we introduce the auxiliary integral operator
\begin{align*}
& U^{\a} f(x) \\
& = e^{|x|^2} \int_{\RR} 
\bigg[ (|x|\t)^{-d} \prod_{i=1}^d (x_i + |x|\t)^{-2\a_i - 1} \bigg]
\wedge
\bigg[ (|x| + 1)^{d} \prod_{i=1}^d \Big(x_i + \frac{1}{|x|+1}\Big)^{-2\a_i - 1} \bigg] f(y) \, d\nu_\a (y);
\end{align*}
recall that $\t = \t(x,y) \in [0,\pi/2]$ and the quantity in the first square bracket above is
interpreted  as $\infty$ if $|x| \t = 0$.
It is straightforward to see that the function $\RRo \ni x \mapsto U^{\a} f(x)$ is continuous. 
Moreover, in view of Lemma~\ref{lem:globest}
\[
K_{*}^{\a,\glob} f (x)
\lesssim
U^{\a} f(x), \qquad x \in \RR, \quad 0 \le f \in L^1(d\nu_\a).
\]
So our task reduces to showing that $U^{\a}$ is of weak type $(1,1)$ with respect to $\nu_{\a}$.

Let $\lam > 0$ and $0 \le f \in L^1(d\nu_\a)$. Let $z_0 \in \RRo$ be such that
\[
|z_0| = \min \left\{|z|: z \in \overline{\RR}, \,\, 
U^{\a} f(z) \ge   \lambda  \right\}, 
\qquad r_0 := |z_0|.
\]
Such a $z_0$ exists because the level set above is closed in $\RRo$ and without any loss of generality we
may assume that it is nonempty. Observe that, in particular, we have
\begin{align}\label{est99} 
e^{r_0^2} (r_0 + 1)^d 
\prod_{i=1}^d \Big((z_0)_i + \frac{1}{r_0+1}\Big)^{-2\a_i - 1}
\|f\|_{L^1(d\nu_\a)} \ge \lam.
\end{align}
In what follows we may assume that $r_0>1$, since otherwise there exists $C > 0$ such that 
$\|f\|_{L^1(d\nu_\a)} \ge C \lam$, which forces
\[
\nu_\a  \left\{ z :  U^{\a} f(z) \ge   \lambda\right\}
\le \nu_\a (\RR) \lesssim \frac{\|f\|_{L^1(d\nu_\a)} }{ \lam }.
\]

We first verify that it is enough to consider the ring $\{ z \in \RRo : r_0 \le |z| \le 2 r_0\}$.
Using Lemma~\ref{lem:L7L9L10} (c) (with $\gamma = 2\a + \1$,
$a = 2r_0$ and $b = \infty$) and \eqref{est99}, we obtain
\begin{align*}
\nu_\a  \left\{ z \in \RRo : |z| \ge 2r_0 \right\}
& \simeq  
r_0^{2 \langle \a \rangle + 2d - 2} e^{-4r_0^2} \\
& \le 
\frac{\|f\|_{L^1(d\nu_\a)} }{ \lam }
(r_0 + 1)^d \prod_{i=1}^d \Big((z_0)_i + \frac{1}{r_0+1}\Big)^{-2\a_i - 1}
r_0^{2 \langle \a \rangle + 2d - 2} e^{-3r_0^2}.
\end{align*}
Since $(z_0)_i \le |z_0| = r_0$, we have
\[
\frac{1}{r_0+1} \le 
(z_0)_i + \frac{1}{r_0+1}
\le r_0+1, \qquad i = 1,\ldots,d,
\]
and consequently
\[
\nu_\a \left\{ z \in \RRo : |z| \ge 2r_0 \right\}
\lesssim
\frac{\|f\|_{L^1(d\nu_\a)} }{ \lam }.
\]

Thus we need only consider the region $r_0 \le |z| \le 2 r_0$. Let
\[
H = \left\{ w \in S_+^{d-1} : \textrm{there exists $\rho \in [r_0, 2r_0]$ such that
$U^{\a} f(\rho w) \ge \lam$} \right\},
\]
and for every $w \in H$ let
\[
r(w) = \min \left\{ \rho \in [r_0, 2r_0] : U^{\a} f(\rho w) \ge \lam \right\}.
\]
This definition is correct, in view of the continuity of $U^{\a} f$.
For every $w \in H$ we have $U^{\a} f(r(w) w) \ge \lam$, which means that
\begin{align*}
\lam & \le 
e^{r(w)^2} \int_{\RR} 
\bigg[ (r(w)\t)^{-d} \prod_{i=1}^d (r(w) w_i + r(w)\t)^{-2\a_i - 1} \bigg] \\
& \qquad \qquad \qquad \qquad 
\wedge
\bigg[ (r(w) + 1)^{d} \prod_{i=1}^d \Big(r(w) w_i + \frac{1}{r(w)+1}\Big)^{-2\a_i - 1} \bigg]
	f(y) \, d\nu_\a (y) \\
& \simeq
e^{r(w)^2} r_0^{-2 \langle \a \rangle - 2d}
\int_{\RR} 
\bigg[ \t^{-d} \prod_{i=1}^d (w_i + \t)^{-2\a_i - 1} \bigg] 
\wedge
\bigg[ r_0^{2d} \prod_{i=1}^d \Big(w_i + \frac{1}{r_0^2}\Big)^{-2\a_i - 1} \bigg] f(y) \, d\nu_\a (y),
\end{align*}
where $\t = \t(w,y)$.

Now an application of Lemma~\ref{lem:F8} (taken with 
$\gamma = 2 \langle \a \rangle + 2 d - 1$, $a = r(w) \simeq r_0$ and $b=\infty$) combined with the above estimate gives
\begin{align*}
& \nu_\a  \left\{ z \in \RRo : r_0 \le |z| \le 2r_0, \, \, U^{\a} f (z) \ge \lam \right\} \\
& \le 
\nu_\a  \left\{ rw : w \in H, \, \, r \ge r(w) \right\}  
=
\int_H \int_{r(w)}^\infty r^{2 \langle \a \rangle + 2d - 1} e^{-r^2} 
\, dr \, w^{2\a + \1} \, d\sigma(w) \\
& \simeq 
\int_H r_0^{2 \langle \a \rangle + 2d - 2} e^{-r(w)^2} w^{2\a + \1} \, d\sigma(w) \\
& \lesssim
\lam^{-1} r_0^{-2} \int_H \int_{\RR}
\bigg[ \t^{-d} \prod_{i=1}^d (w_i + \t)^{-2\a_i - 1} \bigg] 
\wedge
\bigg[ r_0^{2d} \prod_{i=1}^d \Big(w_i + \frac{1}{r_0^2}\Big)^{-2\a_i - 1} \bigg] \\
& \qquad \qquad \qquad \qquad \qquad \times
f(y) \, d\nu_\a (y) \, w^{2\a + \1} \, d\sigma(w).
\end{align*}
Therefore, in order to finish the proof of weak type $(1,1)$ for $U^{\a}$, it is enough to check that
\[
\int_{S_+^{d-1}}
\bigg[ \t^{-d} \prod_{i=1}^d (w_i + \t)^{-2\a_i - 1} \bigg] 
\wedge
\bigg[ r_0^{2d} \prod_{i=1}^d \Big(w_i + \frac{1}{r_0^2}\Big)^{-2\a_i - 1} \bigg]
w^{2\a + \1} \, d\sigma(w)
\lesssim
r_0^2,
\]
uniformly in $y \in \RR$ and $r_0 > 1$; here $\t = \t(w,y) = d(w, \tilde{y})$.

Let $I$ denote the last integral. We split the region of integration in $I$ into dyadic pieces,
\begin{align*}
I =
\int_{ d(w,\tilde{y}) < \frac{1}{r_0^2} } \ldots 
+ \sum_{k=1}^\infty 
\int_{ \frac{2^{k-1}}{r_0^2} \le d(w,\tilde{y}) < \frac{2^k}{r_0^2} }
\ldots,
\end{align*}
where the integration is over subsets of $S_+^{d-1}$.
Applying the second estimate in the minimum to the first term and the first one to the remaining terms,
we get
\begin{align*}
I 
&\lesssim
\int_{ d(w,\tilde{y}) < \frac{1}{r_0^2} } 
r_0^{2d} \prod_{i=1}^d \Big(w_i + \frac{1}{r_0^2}\Big)^{-2\a_i - 1} 
w^{2\a + \1} \, d\sigma(w) \\
& \quad + \sum_{k=1}^\infty 
\int_{ \frac{2^{k-1}}{r_0^2} \le d(w,\tilde{y}) < \frac{2^k}{r_0^2} }
\Big( \frac{r_0^2}{2^k} \Big)^d
\prod_{i=1}^d (w_i + d(w,\tilde{y}))^{-2\a_i - 1} 
w^{2\a + \1} \, d\sigma(w) \\
&\lesssim
\sum_{k=0}^\infty 
\Big( \frac{r_0^2}{2^k} \Big)^d
\int_{ d(w,\tilde{y}) < \frac{2^k}{r_0^2} \wedge 2\pi }
\prod_{i=1}^d \Big(w_i + \frac{2^k}{r_0^2} \wedge 2\pi \Big)^{-2\a_i - 1} 
w^{2\a + \1} \, d\sigma(w).
\end{align*}
This, however, by Lemma~\ref{lem:L7L9L10} (b) (taken with 
$\gamma = 2\a + \1$, $\xi = \tilde{y}$ and $r = \frac{2^k}{r_0^2} \wedge 2\pi$) leads to
\[
I \lesssim
\sum_{k=0}^\infty 
\Big( \frac{r_0^2}{2^k} \Big)^d
\Big( \frac{2^k}{r_0^2} \wedge 2\pi  \Big)^{d-1}
\le 
r_0^2 \sum_{k=0}^\infty 2^{-k}
\simeq r_0^2,
\]
which is the desired estimate. 

The weak type $(1,1)$ estimate for $U^{\a}$ is proved, and it follows that
$K_{*}^{\a,\glob}$ is of weak type $(1,1)$ with respect to $\nu_\a$.
This finishes proving Theorem \ref{thm:max_Lclr}, thus also Theorem \ref{thm:max_Lcl}.

\section{Exotic Laguerre semigroup maximal operator} \label{sec:Lagexo}

To begin with, we consider the one-dimensional situation. 
\subsection{Description of the exotic Laguerre context in dimension one}
Let $\a \in \R$.
Recall that the classical (non-exotic) one-dimensional Laguerre setting exists for $\a > -1$
and the corresponding
self-adjoint Laplacian is $L_{\a}^{\textrm{cls}}$. The semigroup generated by $L_{\a}^{\textrm{cls}}$
has the integral representation with the explicit integral kernel, see \eqref{sLc} and \eqref{sLKer},
$$
G_t^{\a}(x,y) = \frac{e^{t(\alpha+1)/2}}{2\sinh(t/2)} 
	\exp\bigg( - \frac{e^{-t/2}}{2\sinh(t/2)}(x+y)
	\bigg) (x y)^{-\alpha/2} I_{\alpha}\bigg( \frac{\sqrt{x y}}{\sinh(t/2)}\bigg).
$$

Now, the exotic situation occurs for $0 \neq \a < 1$ when the associated self-adjoint Laplacian
is $L_{\a}^{\textrm{exo}}$ (recall that $L_{0}^{\textrm{exo}} = L_0^{\textrm{cls}}$). 
The orthogonal basis of $L^2(d\mu_{\a})$ underlying the spectral decomposition
of $L_{\a}^{\textrm{exo}}$ is $\{x^{-\a}L_n^{-\a} : n \ge 0\}$ and we have
$L_{\a}^{\textrm{exo}}(x^{-\a}L_n^{-\a}) = (n-\a)x^{-\a}L_n^{-\a}$, $n=0,1,2,\ldots$.
Therefore the exotic Laguerre semigroup $\widetilde{T}_t^{\a} = \exp(-tL_{\a}^{\textrm{exo}})$, $t \ge 0$, is
in $L^2(d\mu_{\a})$ given by
$$
\widetilde{T}_t^{\a}f = \sum_{n=0}^{\infty} e^{-t(n-\a)} 
	\big\langle f, x^{-\a}\breve{L}_n^{-\a}\big\rangle_{d\mu_{\a}} \, x^{-\a}\breve{L}_n^{-\a}, \qquad t \ge 0.
$$
The corresponding integral representation is
\begin{equation} \label{eLc}
\widetilde{T}_t^{\a}f(x) = \int_0^{\infty} \widetilde{G}_t^{\a}(x,y) f(y)\, d\mu_{\a}(y),
	\qquad x, t >0,
\end{equation}
with the integral kernel
\begin{align} \nonumber
\widetilde{G}_t^{\a}(x,y) &= \sum_{n=0}^{\infty} e^{-t(n-\a)} \,
	x^{-\a}\breve{L}_n^{-\a}(x)\, y^{-\a}\breve{L}_n^{-\a}(y) \\
&	= e^{t\a} (xy)^{-\a} G_t^{-\a}(x,y), \qquad x,y,t>0. \label{ecl}
\end{align} 
The fact that the exotic kernel is expressed directly in terms of the classical one is crucial for our
developments. In particular, we see that $\widetilde{G}_t^{\a}(x,y)$ is strictly positive and smooth
in $(x,y,t) \in \R_+^3$; this, of course, follows from analogous properties of the non-exotic kernel.

Using the standard bounds for the Bessel function \eqref{Basy*},
it is straightforward to verify that in case $\a < 0$ the integral in \eqref{eLc} converges absolutely
for $f \in L^p(d\mu_{\a})$, $1 \le p \le \infty$.
Thus \eqref{eLc} provides a pointwise definition of $\widetilde{T}_t^{\a}$, $t>0$, on the $L^p$ spaces.
Moreover, $\{\widetilde{T}_t^{\a}\}$ is a semigroup of operators on each 
$L^p(d\mu_{\a})$, $1 \le p \le \infty$; this can be checked, for instance, by means of the 
relation \eqref{ecl} and the analogous property of $\{T_t^{-\a}\}$.

The situation is more subtle in case $0 < \a < 1$. Then the Bessel function asymptotics reveal that
\eqref{eLc} provides a definition of $\widetilde{T}_t^{\a}$ on $L^p(d\mu_{\a})$ only if $p > \a + 1$.
Indeed, if $p \le \a + 1$ then there is an $f \in L^p(d\mu_{\a})$ such that the integral
in \eqref{eLc} diverges for all $x,t>0$. Moreover, if we assume $p > \a+1$ and, given $t>0$,
require that $\widetilde{T}_t^{\a}f \in L^p(d\mu_{\a})$ for all $f \in L^p(d\mu_{\a})$, then we arrive at the
dual restriction $p < (\a+1)/\a$. Thus, in case $0 < \a < 1$, the exotic Laguerre
semigroup maps $L^p$ into itself 
only for $\a+1 < p < (\a+1)/\a$. This is an instance of the
so-called \emph{pencil phenomenon} occurring also in other Laguerre frameworks, cf.\ \cite{MST,NoSj1}.

For further reference we note that in the overlapping range $-1< \a < 0$ the exotic kernel
is dominated by the classical one,
\begin{equation} \label{den}
	\widetilde{G}_t^{\a}(x,y) < G_t^{\a}(x,y), \qquad x,y,t>0, \quad -1 < \a < 0.
\end{equation}
This is an easy consequence of the following inequality satisfied by $I_{\nu}$, see \cite[Theorem 1]{Lo},
\begin{equation} \label{iI}
I_{\nu+\varepsilon}(z) < I_{\nu}(z), \qquad z > 0, \quad \nu \ge -\varepsilon/2, \quad \nu > -1, \quad
	\varepsilon > 0.
\end{equation}
We remark that
\eqref{den} is quite obvious, at least heuristically, in view of the probabilistic interpretation,
see Section \ref{sec:intro}. Indeed, the two kernels are transition probability densities
for processes that are distinguished only by the nature of the boundary point $x=0$,
which is killing or reflecting, respectively. Roughly speaking, one of these processes is just
the other one killed upon hitting the boundary. 

Another fact we shall need is that for $\a < 0$ and $t >0$, the operator $\widetilde{T}_t^{\a}$ is
contractive on $L^{\infty}$. More precisely, we have even strict inequality in the estimate
\begin{equation} \label{cnex}
\widetilde{T}_t^{\a} \mathds{1}(x) < 1, \qquad x,t>0, \quad \a < 0.
\end{equation}
This is justified as follows. By the explicit formula for $\widetilde{G}_t^{\a}(x,y)$, see \eqref{ecl},
we have
$$
\widetilde{T}_t^{\a}\mathds{1}(x) = \frac{e^{t(\a+1)/2}}{\sinh(t/2)} \, x^{-\a/2} 
	\exp\bigg(-\frac{e^{-t/2}}{2\sinh(t/2)}x\bigg) \; \mathcal{I}(x),
$$
where 
$$
\mathcal{I}(x) = \int_0^{\infty} y^{\a+1} \exp\bigg( -\frac{e^{t/2}}{2\sinh(t/2)} y^2\bigg)
	I_{-\a}\bigg( \frac{\sqrt{x}}{\sinh(t/2)}y\bigg)\, dy.
$$
The integral here can be computed by means of \cite[Lemma 2.2]{NS}. The result is
$$
\mathcal{I}(x) = \frac{2^{\a}}{\Gamma(1-\a)} \big[\sinh(t/2)\big]^{\a+1} e^{-t/2} x^{-\a/2}\,
	{_1F_1}\bigg( 1; 1-\a; \frac{e^{-t/2}}{2\sinh(t/2)}x\bigg),
$$
${_1F_1}$ denoting Kummer's confluent hypergeometric function. This leads to the formula
$$
\widetilde{T}_t^{\a}\mathds{1}(x) = H_{1,1-\a}\bigg( \frac{e^{-t/2}x}{2\sinh(t/2)}\bigg), \qquad x,t > 0,
$$
where $H_{1,1-\a}$ is the function defined in \cite[Section 2]{NS}. 
Now we can easily conclude \eqref{cnex} by the proof of \cite[Lemma 2.3]{NS}. Furthermore,
\eqref{cnex} cannot be improved (i.e.\ the right-hand side cannot be smaller) since, 
in view of \cite[Lemma 2.3 (a)]{NS}, $H_{1,1-\a}(u) \to 1$ as $u \to \infty$.

\subsection{Multi-dimensional exotic Laguerre context and the maximal theorem}
We now pass to the multi-dimensional situation that arises, roughly speaking, by taking a tensor product
of the one-dimensional classical and exotic Laguerre settings. 
Let $d \ge 1$. We associate with each $\E \subset \{1,\ldots,d\}$ a set of multi-parameters
\begin{align}\label{def:ABC}
A(\E) & = \{\a \in \R^d : 0\neq \a_i < 1 \;\;\textrm{for}\;\; i \in \E \;\; \textrm{and} \;\;
\a_i > -1 \;\;\textrm{for}\;\; i \in \E^c\};
\end{align}
here and elsewhere $\E^c$ stands for the complement of $\E$ in $\{1,\ldots,d\}$.
The set $\E$ will indicate which coordinate axes are exotic.
From now on we assume that $\E$ is fixed, and we always consider $\a \in A(\E)$.
Further, for such $\a$ we let 
\[
\mE(\a) = 
\begin{cases}	
	\max\{\a_i : i \in \E\}, & \quad \E \ne \emptyset, \\
	- \infty, & \quad \E = \emptyset.
\end{cases}
\]

Define
$$
\mathfrak{L}_n^{\a,\E} = \bigotimes_{i=1}^d 
\begin{cases}
	x_i^{-\a_i} \breve{L}_{n_i}^{-\a_i}, & \quad i \in \E,\\
	\breve{L}_{n_i}^{\a_i}, & \quad i \notin \E,
\end{cases}	
	\qquad n \in \N^d.
$$
Then the system $\{\mathfrak{L}_{n}^{\a,\E} : n \in \N^d\}$ is an orthonormal basis in $L^2(d\mu_{\a})$.
These are eigenfunctions of the Laguerre differential operator $\mathbb{L}_{\a} = \sum_{i=1}^d L_{\a_i}$
(recall that here $L_{\a_i}$ acts on the $i$th coordinate variable), we have
$\mathbb{L}_{\a} \mathfrak{L}_{n}^{\a,\E} = \lambda_n^{\a,\E} \mathfrak{L}_n^{\a,\E}$, 
$n \in \N^d$, where the eigenvalues are given by
$$
\lambda_n^{\a,\E} =  \sum_{i\in \E} (n_i-\a_i) + \sum_{i \in \E^c} n_i
	= \sum_{i = 1}^d n_i - \sum_{i \in \E} \a_i, \qquad n \in \N^d.
$$
Here and later on we use the standard conventions concerning empty sums and products.

We consider the self-adjoint extension of $\mathbb{L}_{\a}$, acting initially on
$\spann\{\mathfrak{L}_n^{\a,\E}: n\in \N^d\} \subset L^2(d\mu_{\a})$, defined by
$$
\mathbb{L}_{\a,\E} f = \sum_{n \in \N^d} \lambda_n^{\a,\E} \big\langle f, \mathfrak{L}_n^{\a,\E}
	\big\rangle_{d\mu_{\a}} \, \mathfrak{L}_{n}^{\a,\E}
$$
on the domain $\dom \mathbb{L}_{\a,\E}$ consisting of all those $f \in L^2(d\mu_{\a})$ for which
this series converges in $L^2(d\mu_{\a})$. Notice that $\mathbb{L}_{\a,\E}$ is non-negative
in the spectral sense if and only if $\sum_{i \in \E} \a_i \le 0$. Observe also that with
$\E = \emptyset$ we recover the classical multi-dimensional Laguerre polynomial context considered
in Section \ref{sec:Lagcls}. Otherwise, i.e.\ when $\E \neq \emptyset$, we use the adjective
\emph{exotic} to distinguish this situation and related objects from the classical setup.

The semigroup $\mathbb{T}_t^{\a,\E} = \exp(-t\mathbb{L}_{\a,\E})$, $t \ge 0$,
defined spectrally in $L^2(d\mu_{\a})$, has the
integral representation
\begin{equation} \label{eLir}
\mathbb{T}_t^{\a,\E}f(x) = \int_{\R^d_+} \mathbb{G}_t^{\a,\E}(x,y)f(y)\, d\mu_{\a}(y),
	\qquad x \in \R^d_+, \quad t > 0,
\end{equation}
where the integral kernel is the product of one-dimensional classical and exotic kernels,
$$
\mathbb{G}_t^{\a,\E}(x,y) = 
	\prod_{i \in \E} \widetilde{G}_t^{\a_i}(x_i,y_i)
	\prod_{i \in \E^c} G_t^{\a_i}(x_i,y_i), \qquad  x,y \in \R^d_+, \quad t>0.
$$
Clearly, $\mathbb{G}_t^{\a,\E}(x,y)$ is strictly positive and symmetric, and moreover smooth
in $(x,y,t) \in \R^{2d+1}_+$; this, as well as several further facts below,
follows from analogous properties of the one-dimensional kernels.

When $\mE(\a) < 0$, the formula \eqref{eLir} provides a pointwise definition of
$\mathbb{T}_t^{\a,\E}f$ on all $L^p(d\mu_{\a})$ spaces, $1\le p \le \infty$.
Moreover, $\{\mathbb{T}_t^{\a,\E}\}$
is a semigroup of contractions on each $L^p(d\mu_{\a})$, $1\le p \le \infty$. This follows
(see \cite[Lemma 2.1]{NS}) from the
estimate $\mathbb{T}_t^{\a,\E} \mathds{1}(x) \le 1$, $x \in \R^d_{+}$, $t>0$, where the inequality
is actually strict provided that $\E \neq \emptyset$. Thus $\{\mathbb{T}_t^{\a,\E}\}$ is
a submarkovian symmetric diffusion semigroup, which is Markovian if and only if $\E = \emptyset$.

For $\a$ satisfying $\mE(\a) > 0$ a pencil phenomenon occurs. The operators
$\mathbb{T}_t^{\a,\E}$, $t >0$, are not even defined in $L^p(d\mu_{\a})$ when
$p \le 1 + \mE(\a)$; in particular, this happens for $p=1$. On the other hand,
they are well defined for $p > 1 + \mE(\a)$, but then the requirement that, given $t>0$,
$\mathbb{T}_t^{\a,\E}f \in L^p(d\mu_{\a})$ for all $f \in L^p(d\mu_{\a})$ forces the dual restriction
$p < 1 + 1/\mE(\a)$; in particular, $\mathbb{T}_t^{\a,\E}\mathds{1}$
is an unbounded function for each $t>0$.

Bring now in the maximal operator
$$
\mathbb{T}_{*}^{\a,\E}f = \sup_{t>0} \big| \mathbb{T}_t^{\a,\E}f\big|.
$$
Our aim is to prove that $\mathbb{T}_*^{\a,\E}$ satisfies the weak type $(1,1)$ estimate with respect
to the measure $\mu_{\a}$. In view of what was already said, this problem makes sense only when
$\mE(\a) < 0$. Note that for such $\a$, Stein's general maximal theorem for semigroups of operators
\cite[p.\,73]{St} implies the $L^p(d\mu_{\a})$-boundedness of $\mathbb{T}_{*}^{\a,\E}$ for $p>1$.
Because of the pencil phenomenon, from the perspective of the $L^p$ mapping properties of
$\mathbb{T}_*^{\a,\E}$ the case $\mE(\a) > 0$ is qualitatively different and
more sophisticated than $\mE(\a) < 0$; thus it is beyond the scope of this paper.
We refer to \cite{NoSj1} for some interesting questions
that can be posed in connection with the pencil phenomenon.

The main result of this paper reads as follows.
\begin{thm} \label{thm:weak}
Let $d \ge 1$ and $\a \in A(\E)$ for some $\E \subset \{1,\ldots,d\}$. Assume that $\mE(\a) < 0$. 
Then $\mathbb{T}_{*}^{\a,\E}$ is bounded from $L^1(d\mu_{\a})$ to weak $L^{1}(d\mu_{\a})$,
that is, the estimate
$$
\mu_{\a}\big\{x \in \R^d_{+}: \mathbb{T}_{*}^{\a,\E}f(x)> \lambda \big\} \le \frac{C}{\lambda}
	\int_{\R^d_{+}} |f(x)|\, d\mu_{\a}(x), \qquad \lambda > 0, \quad f \in L^1(d\mu_{\a}),
$$
holds with a constant $C$ independent of $\lambda$ and $f$.
\end{thm}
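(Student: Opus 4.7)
The plan is to follow the local/global strategy of Section \ref{sec:Lagcls}, using the identity \eqref{ecl} to bring the exotic kernel into a form amenable to the analysis developed there. Applying \eqref{ecl} coordinatewise in each exotic direction, one obtains
\[
\mathbb{G}_t^{\a,\E}(x,y) = e^{t\sum_{i\in\E}\a_i}\prod_{i\in\E}(x_iy_i)^{-\a_i}\, G_t^{\a^*}(x,y),
\]
where $\a^*\in(-1,\infty)^d$ is given by $\a^*_i=-\a_i$ for $i\in\E$ and $\a^*_i=\a_i$ otherwise. The assumption $\mE(\a)<0$ forces $\sum_{i\in\E}\a_i<0$, so the scalar prefactor is bounded by $1$ uniformly in $t$ and may be discarded. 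Performing the same changes of variables ($x\mapsto xx$, $y\mapsto yy$, $s=\tanh(t/4)$) and applying \eqref{Basy*} together with Lemma \ref{lem:L3} as in Subsection \ref{ssec:rrs}, the theorem reduces to proving the weak type $(1,1)$ with respect to $\nu_\a$ of the maximal operator whose kernel is
\[
\widetilde{K}_s^{\a,\E}(x,y) = s^{-d/2}\prod_{i\in\E}(x_iy_i)^{-2\a_i}(x_i+\sqrt{s})^{2\a_i-1}\prod_{i\in\E^c}(x_i+\sqrt{s})^{-2\a_i-1}\exp\Big(|x|^2-\frac{|(1+s)x-(1-s)y|^2}{8s}\Big).
\]

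Next, I would decompose this maximal operator into its local and global parts using $B(x,m(x))$ with $m(x)=1/(|x|+1)$. For the local part, since $-2\a_i>0$ for $i\in\E$, the bound $y_i\le x_i+m(x)$ dominates the extra factor $\prod_{i\in\E}(x_iy_i)^{-2\a_i}$ pointwise by a function of $x$ (and $s$) alone; the resulting estimate, after combining with $(x_i+\sqrt{s})^{2\a_i-1}$, yields a Gaussian-type bound on a suitable doubling space, controllable by a Hardy-Littlewood maximal operator exactly in the spirit of Section \ref{sec:Lagcls}. For the global part, I would mirror Subsection \ref{sec:Lc_glob}: exploit $|(1+s)x-(1-s)y|^2\ge|x|^2\theta(x,y)^2$ and Lemma \ref{lem:Gauss2} to derive a uniform-in-$s$ pointwise bound for $\widetilde{K}_s^{\a,\E}(x,y)e^{-|x|^2}$ depending only on $x$ and the angle, dominate the global maximal operator by an angular integral operator, select a minimizer $z_0$ of $|z|$ on the relevant level set, control the tail $\{|z|\ge 2|z_0|\}$ via Lemma \ref{lem:L7L9L10}(c), and close the ring $\{|z_0|\le|z|\le 2|z_0|\}$ through a dyadic spherical decomposition together with Lemma \ref{lem:L7L9L10}(b).

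The main obstacle is the local part when $\a_i\le-1$ for some $i\in\E$, since in that regime $\nu_\a$ fails to be doubling near the exotic axis $\{x_i=0\}$ and the Hardy-Littlewood machinery does not apply off the shelf. The rescue is built into the kernel itself: the factor $(x_iy_i)^{-2\a_i}$ vanishes precisely where $\nu_\a$ blows up, so after redistributing the weight between the kernel and the measure, the effective measure becomes locally comparable to the doubling measure $\eta_{\a^*}$ associated with $\a^*\in(-1,\infty)^d$, to which the classical maximal theorem applies, and one transfers back to $\nu_\a$ through the explicit Radon-Nikodym factor $\prod_{i\in\E}x_i^{4\a_i}$. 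Carrying out this redistribution consistently on the bounded-overlap covering used to sum the local estimates, and keeping the constants uniform, is the delicate technical core of the argument.
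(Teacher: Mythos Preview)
Your overall architecture (reduce to the kernel $\widetilde K_s^{\a,\E}$ and then split) is sound, but the specific local/global decomposition you propose, based on the Euclidean balls $B(x,m(x))$, does not go through as stated.

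First, discarding the prefactor $e^{t\sum_{i\in\E}\a_i}$ is harmless as an upper bound but fatal for your global analysis. Take $d=1$, $\a\le-1$, $x=1$, $y=M$ large and $s=1-2/M$. Then $\theta(x,y)=0$, yet $\widetilde K_s^{\a,\E}(x,y)\simeq M^{-2\a}\to\infty$, so no bound depending only on $x$ and $\theta$ can hold. The missing factor is $(1-s)^{-2\a}\simeq M^{2\a}$, which is exactly what cancels this growth. Even if you restore it, the device that eliminates the $y_i$-dependence (Lemma~\ref{lem:L3_2} applied with $a=(1+s)x_i$, $b=(1-s)y_i$) requires $a\ge 1$, hence $x_i\gtrsim 1$; for small $x_i$ that bound is false (take $a$ small and $b\simeq 1$). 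The paper handles this by a different splitting: first separate $y_i\sim x_i$ from $y_i\not\sim x_i$ in each exotic coordinate (Lemma~\ref{lem:p_loc}); on the complement, split $0<s\le 1/4$ versus $1/4<s<1$, obtaining one-dimensional \emph{strong} $(1,1)$ estimates in the first range and for small~$x$ in the second (Lemmas~\ref{lem:p_glob_0} and~\ref{lem:p_lg1}), and reserving the angular weak-type argument only for $x\in[1,\infty)^d$ and $s>1/4$ (Lemma~\ref{lem:p_gg1}), where Lemma~\ref{lem:L3_2} is legitimate.

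Second, your local redistribution idea is morally right but cannot be summed over the Euclidean bounded-overlap covering $\{B(q_k,3m(q_k))\}$: the Radon--Nikodym factor $\prod_{i\in\E}x_i^{4\a_i}$ is unbounded on any such ball that touches an exotic hyperplane $\{x_i=0\}$, so the constants are not uniform in $k$. What works instead is to localise dyadically in each exotic coordinate, $2^{n_i}<x_i\le 2^{n_i+1}$, where this factor \emph{is} essentially constant; on those strips the problem reduces to the classical maximal theorem (Theorem~\ref{thm:max_Lclr}) for the parameter $\check\a=(-\a',\a'')\in(-1,\infty)^d$, and the finite overlap of the enlarged strips gives the summation. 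This is precisely the content of Lemma~\ref{lem:p_loc}.
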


When proving Theorem \ref{thm:weak} we can make the following reductions.
\begin{itemize}
\item[(R1)] Assume $f \ge 0$, since the kernel $\mathbb{G}_t^{\a,\E}(x,y)$ is positive.
\item[(R2)] Restrict to $\a$ satisfying $\mE(\a) \le -1$, because of the majorization \eqref{den}. 
\item[(R3)] Consider $\E \neq \emptyset$, since the case $\E =\emptyset$ 
	corresponds to the classical case in which the result is already known, see Theorem \ref{thm:max_Lcl}.
\item[(R4)] Drop $\E$ from the notation, in view of (R2).
\item[(R5)] Assume, for symmetry reasons, that $\E = \{1, \ldots, \e \}$ for some $1 \le \e \le d$.
\end{itemize}
Then $\a = (\a',\a'') \in (-\infty,-1]^{\e} \times (-1,\infty)^{d-\e}$ and the kernel can be written as
$$
\mathbb{G}_t^{\a}(x,y) = \prod_{i=1}^{\e} \widetilde{G}_t^{\a_i}(x_i,y_i) 
	\prod_{i=\e+1}^d G_t^{\a_i}(x_i,y_i)
	\equiv \widetilde{G}_t^{\a'}(x',y') G_t^{\a''}(x'',y''),
$$
where for $z \in \R^d$ we denote $z'=(z_1,\ldots,z_{\e})\in \R^{\e}$ and 
$z''=(z_{\e+1},\ldots,z_d)\in \R^{d-\e}$.
Note that the double-prime part may be void here and in what follows.

Taking into account \eqref{ecl} and the considerations and notation from Section \ref{ssec:rrs}, we see that
$$
\mathbb{G}_{t(s)}^{\a}\big( xx,yy \big) \lesssim \mathbb{K}_s^{\a}(x,y),
	\qquad x,y \in \R^d_+, \quad s \in (0,1),
$$
where
$$
\mathbb{K}_{s}^{\a}(x,y) := 
	(1-s)^{-2 \langle \a' \rangle}(x'y')^{-2\a'} K_s^{-\a'}(x',y') K_s^{\a''}(x'',y'').
$$
This leads us to considering the maximal operator
$$
\mathbb{K}^{\a}_{*}f(x)  = \sup_{0<s<1} \int_{\R^d_+} \mathbb{K}_s^{\a}(x,y) f(y)\, d\nu_{\a}(y),
	\qquad x \in \R^d_+, \quad 0 \le f \in L^1(d\nu_{\a}).
$$
Clearly, Theorem \ref{thm:weak} will follow once we prove the result for $\mathbb{K}^{\a}_{*}$
stated below.
\begin{thm} \label{thm:weakr}
Let $d \ge 1$ and $\a \in (-\infty,-1]^{\e} \times (-1,\infty)^{d-\e}$ for some $1\le \e \le d$.
Then $\mathbb{K}_{*}^{\a}$ satisfies
$$
\nu_{\a}\big\{x\in \R^d_{+}: \mathbb{K}_*^{\a}f(x)> \lambda \big\} \le \frac{C}{\lambda}
	\int_{\R_+^d}f(x)\, d\nu_{\a}(x), \qquad \lambda > 0, \quad 0 \le f \in L^1(d\nu_{\a}),
$$
with a constant $C$ independent of $\lambda$ and $f$.
\end{thm}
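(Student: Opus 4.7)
The plan is to adapt the proof of Theorem \ref{thm:max_Lclr}, managing the additional exotic factors in $\mathbb{K}_s^\a$ via a reformulation that brings us back to a classical-type kernel. Since $-2\langle\a'\rangle \ge 2\e > 0$ and $s \in (0,1)$, one has $(1-s)^{-2\langle\a'\rangle} \le 1$, so this factor can be dropped. With $\tilde\a := (0,\a'') \in (-1,\infty)^d$ and $\gamma := (-\a',\a'') \in (-1,\infty)^d$, the identity $(y')^{-2\a'}\,d\nu_\a(y) = d\nu_{\tilde\a}(y)$ absorbs $(y')^{-2\a'}$ into a change of integration measure, yielding
$$
\mathbb{K}_*^\a f(x) \lesssim (x')^{-2\a'} \sup_{0<s<1} \int_{\R_+^d} K_s^\gamma(x,y)\, f(y)\, d\nu_{\tilde\a}(y),
$$
where $K_s^\gamma$ is the classical Laguerre kernel from Section \ref{sec:Lagcls} and $\nu_{\tilde\a}$ is a doubling measure on $\R_+^d$.

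One then splits into local and global parts with respect to the balls $B(x, m(x))$. For the local part, the Gaussian-type bound for $K_s^\gamma$ (valid since $\gamma$ lies in the classical range) combined with the ratio $\eta_\gamma(B(x,r))/\eta_{\tilde\a}(B(x,r)) \simeq \prod_{i\in\E}(x_i+r)^{-2\a_i}$ introduces, after integrating against $d\nu_{\tilde\a}$ on a local ball, an extra factor $\prod_{i\in\E}(x_i+\sqrt s)^{2\a_i}$. Each such factor is decreasing in $s$, hence $\sup_{s\in(0,1)}$ equals $(x')^{2\a'}$, which precisely cancels the external $(x')^{-2\a'}$. This yields
$$
\mathbb{K}_*^{\a,\loc} f(x) \lesssim M_{\tilde\a}\big(f \chi_{B(x,m(x))}\big)(x),
$$
where $M_{\tilde\a}$ is the Hardy-Littlewood maximal operator on $(\R_+^d, \eta_{\tilde\a}, |\cdot|)$. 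The weak-type $(1,1)$ bound with respect to $\nu_\a$ then follows by a covering argument with balls $B(q_k, m(q_k))$ as in Section \ref{sec:Lagcls}: for $q_k$ away from the primed boundary $\{y'=0\}$, the density ratios $d\nu_\a/d\eta_{\tilde\a}$ cancel exactly between the input and output sides; for $q_k$ intersecting the primed boundary, a direct estimate of the $\nu_\a$-measure on the intersection of the ball with the level set, leveraging both the weight $(x')^{2\a'+\1}$ in $d\nu_\a$ and the finite $L^1(d\nu_\a)$-mass of $f$ on the enlarged ball, is required.

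For the global part, Lemma \ref{lem:globest} applies directly to $K_s^\gamma$ (since $\gamma \in (-1,\infty)^d$), producing an $s$-independent majorant. The global maximal operator is thereby dominated by $(x')^{-2\a'} U^\gamma f(x)$, where $U^\gamma$ is the analogue of the auxiliary operator $U^\a$ from Section \ref{sec:Lc_glob} with integration against $d\nu_{\tilde\a}$. The argument of Section \ref{sec:Lc_glob} then transfers: choose a minimizer $z_0$ of $|z|$ on the level set, handle $\{|z| > 2r_0\}$ via Gaussian decay, and bound the ring $\{r_0 \le |z| \le 2r_0\}$ by a dyadic decomposition in the geodesic distance on $S^{d-1}_+$ combined with Lemma \ref{lem:L7L9L10}(b). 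The output weight $(x')^{-2\a'}$ is absorbed through $d\nu_\a(x) = (x')^{2\a'}d\nu_{\tilde\a}(x)$, which is the natural measure against which the computation takes place.

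The main obstacle is the non-doubling character of $\nu_\a$ near the hyperplanes $\{x_i = 0\}$ for $i \in \E$, where $\nu_\a$ fails to be locally finite. This pathology is precisely compensated by the output weight $(x')^{-2\a'}$ via the identity $(x')^{-2\a'}d\nu_\a(x) = d\nu_{\tilde\a}(x)$. Carrying this cancellation through the proof, particularly in the covering argument for the local part in the vicinity of the primed coordinate hyperplanes, is expected to be the most technically delicate piece of the work and is where the exotic setting departs most visibly from the classical situation of Section \ref{sec:Lagcls}.
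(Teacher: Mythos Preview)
Your overall strategy of absorbing the exotic factors into a change of measure and then mimicking Section~\ref{sec:Lagcls} is appealing, but the local part contains a genuine gap that cannot be repaired without a substantially different idea.

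The problem is the inequality
\[
\mathbb{K}_*^{\a,\loc} f(x) \lesssim M_{\tilde\a}\big(f\chi_{B(x,m(x))}\big)(x),
\]
obtained via $\sup_s\big[\prod_{i\in\E}(x_i+\sqrt s)^{2\a_i}\cdot B(s)\big]\le (x')^{2\a'}\sup_s B(s)$. This bound is correct but fatally crude: it discards the vanishing of $\mathbb{K}_*^\a f(x)$ as $x_i\to 0^+$ for $i\in\E$ (which comes from the factor $(x')^{-2\a'}$, since $-2\a_i\ge 2$), whereas the right-hand side does \emph{not} vanish there. Concretely, take $d=\e=1$, $\a\le -1$, and $f=\chi_{[0.4,\,0.6]}\in L^1(d\nu_\a)$. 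For every small $x>0$ one has $B(x,m(x))\supset[0.4,0.6]$, and a ball of radius $0.6$ centred at $x$ shows $M_{\tilde\a}(f)(x)\ge c_0>0$ uniformly in $x\in(0,\tfrac12)$. Hence the level set $\{M_{\tilde\a}(f\chi_{B(x,m(x))})(x)>c_0/2\}$ contains an interval $(0,\epsilon)$, whose $\nu_\a$-measure is $\int_0^\epsilon x^{2\a+1}e^{-x^2}\,dx=+\infty$. So the majorant you obtain is \emph{not} of weak type $(1,1)$ with respect to $\nu_\a$, and the covering argument cannot be salvaged; the ``direct estimate'' you defer for balls near the primed boundary would have to be an estimate of an infinite quantity by a finite one. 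The same loss of boundary decay threatens the global part, although there it is less transparent.

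The paper proceeds quite differently and avoids touching the local/global machinery of Section~\ref{sec:Lagcls} with the exotic weight in place. It splits according to whether $y_i\simeq x_i$ in each exotic coordinate (not according to $|x-y|\lessgtr m(x)$). On the diagonal region $x_i/2\le y_i\le 2x_i$ for $i\in\E$ (Lemma~\ref{lem:p_loc}), one has $(x_iy_i)^{-2\a_i}\simeq x_i^{-4\a_i}$, and a dyadic decomposition $S_n=\{2^{n_i}<x_i\le 2^{n_i+1}\}$ in the exotic coordinates makes the weight ratio constant on each piece, reducing directly to Theorem~\ref{thm:max_Lclr} with $\check\a=(-\a',\a'')$. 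Off the diagonal in an exotic coordinate (Lemma~\ref{lem:p_glob_0}), a one-dimensional kernel bound gives \emph{strong} type $(1,1)$ for small $s$; for large $s$ the argument splits further into $x_i<1$ (Lemma~\ref{lem:p_lg1}, again strong type by elementary kernel estimates) and $x\in[1,\infty)^d$ (Lemma~\ref{lem:p_gg1}, where the infinite-measure boundary is simply excluded and a spherical argument analogous to Section~\ref{sec:Lc_glob} applies). The key conceptual point is that the multiplicative localisation $y_i\simeq x_i$ in the exotic variables is exactly what keeps the non-doubling weight under control, whereas the additive localisation $|x-y|<m(x)$ does not.
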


We split the proof into several lemmas stated below and proved in the subsequent subsections.
Altogether, they imply Theorem \ref{thm:weakr}, taking
into account the product structure of $\mathbb{K}_s^{\a}(x,y)$ and Theorem \ref{thm:max_Lclr}.

\begin{lem} \label{lem:p_loc}
For any $d \ge 1$, $1 \le \e \le d$ and each $\a \in (-\infty,-1]^{\e} \times (-1,\infty)^{d-\e}$
the maximal operator
$$
\mathbb{K}^{\a}_{*,1}f(x) = \sup_{0< s < 1} \int_{\R^d_+} 
	\chi_{\{x_i/2 \le y_i \le 2x_i\; \textrm{for}\; i=1,\ldots,\e\}} \mathbb{K}^{\a}_s(x,y)
	 f(y)\, d\nu_{\a}(y), \qquad f \ge 0,
$$
is of weak type $(1,1)$ with respect to $\nu_{\a}$.
\end{lem}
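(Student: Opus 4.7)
The plan is to reduce $\mathbb{K}^{\a}_{*,1}$ to the fully classical (non-exotic) maximal operator $K^{\tilde\a}_{*}$ with $\tilde\a := (-\a', \a'') \in (-1,\infty)^d$, whose weak type $(1,1)$ with respect to $\nu_{\tilde\a}$ is already granted by Theorem~\ref{thm:max_Lclr}. First I would rewrite
$$
\mathbb{K}_s^{\a}(x,y) = (1-s)^{-2\langle \a' \rangle} (x'y')^{-2\a'} K_s^{\tilde\a}(x,y),
$$
and note that $d\nu_{\a}/d\nu_{\tilde\a}(y) = (y')^{4\a'}$, so that the ratio of the two weighted kernels equals $(1-s)^{-2\langle \a' \rangle}(x')^{-2\a'}(y')^{2\a'}$. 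Since $\langle \a' \rangle \le -\e < 0$, the $s$-factor is bounded by $1$ on $(0,1)$, and once the exotic cutoff $y_i \in [x_i/2, 2x_i]$, $i \le \e$, is imposed the remaining factor is comparable to $1$. This gives the pointwise-in-$s$ bound $\mathbb{K}_s^{\a}(x,y)\,d\nu_{\a}(y) \lesssim K_s^{\tilde\a}(x,y)\,d\nu_{\tilde\a}(y)$ on the region of integration.

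Next I would perform a dyadic decomposition in the exotic coordinates: for $k \in \Z^\e$ set $R_k = \prod_{i=1}^\e [2^{k_i},2^{k_i+1}] \times \R_+^{d-\e}$ and let $R_k^*$ be a fixed enlargement (by a constant factor in each exotic coordinate) chosen so that the cutoff region associated with any $x \in R_k$ lies inside $R_k^*$. The first step then yields
$$
\mathbb{K}^{\a}_{*,1}f(x) \lesssim K^{\tilde\a}_{*}(f\chi_{R_k^*})(x), \qquad x \in R_k.
$$
Moreover, on $R_k^*$ the Radon--Nikodym density $(x')^{4\a'}$ is uniformly comparable to the constant $\rho_k := \prod_{i=1}^\e 2^{4\a_i k_i}$, so that $d\nu_{\a} \simeq \rho_k\, d\nu_{\tilde\a}$ on $R_k \cup R_k^*$.

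The conclusion then follows from the chain
\begin{align*}
\nu_{\a}\{x : \mathbb{K}^{\a}_{*,1}f(x) > \lambda\}
&\le \sum_{k \in \Z^\e} \nu_{\a}\{x \in R_k : K^{\tilde\a}_{*}(f\chi_{R_k^*})(x) > C\lambda\} \\
&\simeq \sum_{k} \rho_k\, \nu_{\tilde\a}\{x \in R_k : K^{\tilde\a}_{*}(f\chi_{R_k^*})(x) > C\lambda\} \\
&\lesssim \sum_{k} \frac{\rho_k}{\lambda} \int_{R_k^*} f\, d\nu_{\tilde\a}
 \simeq \frac{1}{\lambda} \sum_{k} \int_{R_k^*} f\, d\nu_{\a}
 \lesssim \frac{1}{\lambda}\|f\|_{L^1(d\nu_{\a})},
\end{align*}
where the third line invokes the classical weak type $(1,1)$ estimate of Theorem~\ref{thm:max_Lclr}, reverses the conversion $\nu_{\a} \leftrightarrow \rho_k\nu_{\tilde\a}$ on $R_k^*$, and uses the bounded overlap of the family $\{R_k^*\}_{k \in \Z^\e}$.

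The main technical delicacy is precisely the $\nu_{\a}$-versus-$\nu_{\tilde\a}$ bookkeeping. The density $(x')^{4\a'}$ can be arbitrarily large or small, so no useful pointwise comparison between the two measures holds globally; the dyadic partition of the exotic coordinates is what makes this density essentially constant on each piece, thereby allowing a local application of the classical theorem and a clean cancellation of the factor $\rho_k$ when the integrals of $f$ are converted back to $\nu_{\a}$.
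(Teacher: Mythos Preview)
Your proposal is correct and follows essentially the same route as the paper's own proof: both reduce to the classical maximal operator $K_*^{\check\a}$ with $\check\a=(-\a',\a'')$ via the observation that $(1-s)^{-2\langle\a'\rangle}\le 1$ and $(x'y')^{-2\a'}\,d\nu_\a\simeq d\nu_{\check\a}$ on the cutoff region, then perform a dyadic decomposition of the exotic coordinates and invoke Theorem~\ref{thm:max_Lclr} piecewise, using that the density $(x')^{4\a'}$ is essentially constant on each dyadic block and that the enlarged blocks have bounded overlap. Your write-up is in fact slightly more explicit than the paper's about the $\rho_k$ bookkeeping, but the argument is the same.
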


\begin{lem} \label{lem:p_glob_0}
For each $\beta \le -1$ the one-dimensional operator
$$
N^{\beta}_{1}f(x) = \int_{0}^{\infty} \chi_{\{y < x/2 \;\textrm{or}\; y>2x\}} 
	\bigg[(xy)^{-2\beta} \sup_{0<s \le 1/4} 
	(1-s)^{-2\beta}  K_s^{-\beta}(x,y)\bigg] f(y)\, d\nu_{\beta}(y), \qquad f \ge 0,
$$
is of strong type $(1,1)$ with respect to $\nu_{\beta}$.
\end{lem}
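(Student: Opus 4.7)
The plan is to prove the strong type $(1,1)$ bound by Fubini's theorem: it suffices to show that the adjoint-type integral
\[
N(y) := \int_0^\infty \chi_{\{y < x/2 \text{ or } y > 2x\}} (xy)^{-2\beta} \sup_{0<s\le 1/4}(1-s)^{-2\beta} K_s^{-\beta}(x,y)\, d\nu_\beta(x)
\]
is bounded uniformly in $y>0$. I will use the one-dimensional form
\[
K_s^{-\beta}(x,y) = s^{-1/2}(x+\sqrt s)^{2\beta-1}\exp\!\Big(x^2 - \tfrac{|(1+s)x-(1-s)y|^2}{8s}\Big),
\]
and note that the factor $(1-s)^{-2\beta} \simeq 1$ on $0<s\le 1/4$ since $-2\beta\ge 2$.

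The first step is a simple geometric observation: if $s\le 1/4$ and either $y<x/2$ or $y>2x$, then the linear combination $|(1+s)x-(1-s)y|$ is comparable to $x\vee y$. Indeed, $(1+s)x-(1-s)y=(x-y)+s(x+y)$; in the region $y<x/2$ this is positive and comparable to $x$, while in $y>2x$ the opposite sign dominates and the magnitude is comparable to $y$. Next I rewrite $(x+\sqrt s)^{2\beta-1}\simeq (s+x^2)^{\beta-1/2}$ and apply Lemma~\ref{lem:Gauss2} with $\kappa=-1/2$, $\gamma=\beta-1/2$, $A=x^2$, $c=1/8$ and $z\simeq x\vee y$; the hypothesis $\kappa+\gamma=\beta-1\le 0$ is satisfied because $\beta\le -1$. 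This yields
\[
\sup_{0<s\le 1/4}(1-s)^{-2\beta}K_s^{-\beta}(x,y) \lesssim e^{x^2}(x\vee y)^{-1}\bigl((x\vee y)^2+x^2\bigr)^{\beta-1/2}\simeq e^{x^2}(x\vee y)^{2\beta-2},
\]
using $x\vee y\ge x$ in the last step.

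Plugging this estimate into $N(y)$ and cancelling $e^{x^2}$ against $e^{-x^2}$ from $d\nu_\beta(x)=x^{2\beta+1}e^{-x^2}dx$, the exponent $x^{-2\beta}\cdot x^{2\beta+1}=x$ leaves
\[
N(y) \lesssim y^{-2\beta}\int_{\{x>2y\}\cup\{x<y/2\}} x\,(x\vee y)^{2\beta-2}\,dx.
\]
On $\{x>2y\}$ one has $x\vee y=x$ and the integrand becomes $x^{2\beta-1}$, which integrates to $\simeq y^{2\beta}$ (convergent because $2\beta-1\le -3$). On $\{x<y/2\}$ one has $x\vee y=y$ and the integral is $y^{2\beta-2}\int_0^{y/2}x\,dx\simeq y^{2\beta}$. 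Both contributions combine with the prefactor $y^{-2\beta}$ to give $N(y)\lesssim 1$, completing the argument.

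There is no serious obstacle; the whole proof is driven by two routine ingredients: the comparability of $|(1+s)x-(1-s)y|$ with $x\vee y$ off the diagonal for small $s$, and the sharp supremum estimate provided by Lemma~\ref{lem:Gauss2}. The only points to be attentive to are that the strict inequality $\beta\le -1$ is exactly what makes $\kappa+\gamma\le 0$ in Lemma~\ref{lem:Gauss2} and also makes the integral $\int_{2y}^\infty x^{2\beta-1}\,dx$ converge at infinity, so the argument is tight with respect to the stated range of $\beta$.
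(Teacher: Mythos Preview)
Your proof is correct and follows essentially the same route as the paper's: both use the off-diagonal comparability $|(1+s)x-(1-s)y|\gtrsim x\vee y$ for $s\le 1/4$, apply Lemma~\ref{lem:Gauss2} with $\kappa=-1/2$, $\gamma=\beta-1/2$, $A=x^2$ to obtain the pointwise kernel bound $(xy)^{-2\beta}(x\vee y)^{2\beta-2}e^{x^2}=(x\wedge y)^{-2\beta}(x\vee y)^{-2}e^{x^2}$, and then check the resulting $d\nu_\beta(x)$-integral directly. Your closing remark on tightness slightly overstates matters (the hypothesis of Lemma~\ref{lem:Gauss2} and the convergence of $\int_{2y}^\infty x^{2\beta-1}\,dx$ only require $\beta<0$), but this does not affect the argument.
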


\begin{lem} \label{lem:p_lg1}
For each $\beta \le -1$ and $\gamma> -1$ the one-dimensional operators
\begin{align*}
N^{\beta}_{2}f(x) & = \chi_{(0,1)}(x)\int_{0}^{\infty}
	\bigg[ (xy)^{-2\beta} \sup_{1/4 < s < 1}
	(1-s)^{-2\beta} K_s^{-\beta}(x,y)\bigg] f(y)\, d\nu_{\beta}(y), \qquad f \ge 0, \\
N^{\gamma}_{3}f(x) & = \chi_{(0,1)}(x)\int_{0}^{\infty}
\bigg[\sup_{1/4 < s < 1}	K_s^{\gamma}(x,y)\bigg] f(y)\, d\nu_{\gamma}(y), \qquad f \ge 0,
\end{align*}
are of strong type $(1,1)$ with respect to $\nu_{\beta}$ and $\nu_{\gamma}$, respectively.
\end{lem}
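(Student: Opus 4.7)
My plan is to reduce both assertions, via Fubini--Tonelli, to one-dimensional $x$-integrals that can be handled by direct pointwise bounds on the $s$-supremum. In both operators the variable $x$ is confined to $(0,1)$ and $s$ lies in $(1/4,1)$; on this range $s^{-1/2}$, the power $(x+\sqrt{s})^{-2\gamma-1}$ inside $K_s^{\gamma}$ (respectively $(x+\sqrt{s})^{2\beta-1}$ inside $K_s^{-\beta}$), and $e^{x^2}$ are all comparable to constants. Hence only the Gaussian-type factor $\exp(-((1+s)x-(1-s)y)^2/(8s))$, together with the weight $(1-s)^{-2\beta}$ in the $N^{\beta}_2$ case, requires genuine analysis.

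For $N^{\gamma}_3$ the argument will be short: bounding the Gaussian factor by $1$ yields $\sup_{s} K_s^{\gamma}(x,y) \lesssim 1$ uniformly in $y$, and Fubini then produces
\[
\|N^{\gamma}_3 f\|_{L^1(d\nu_\gamma)} \lesssim \Bigl(\int_0^1 x^{2\gamma+1}\,dx\Bigr) \|f\|_{L^1(d\nu_\gamma)},
\]
the $x$-integral being finite since $\gamma > -1$.

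For $N^{\beta}_2$ the additional factor $(xy)^{-2\beta}$ with $-2\beta \ge 2$ precludes any crude uniform bound on the $s$-supremum: decay of order $y^{2\beta}$ is required to compensate the $y^{-2\beta}$ growth. The heart of the proof will therefore be the pointwise estimate
\[
\sup_{1/4 < s < 1} (1-s)^{-2\beta}\, \exp\biggl(-\frac{((1+s)x-(1-s)y)^2}{8s}\biggr) \lesssim y^{2\beta}, \qquad x \in (0,1),\; y > 0.
\]
Once this is in hand, the $y^{2\beta}$ cancels the $y^{-2\beta}$ from $(xy)^{-2\beta}$, the $x$-integrand collapses to $x\,dx$, and Fubini delivers $\|N^{\beta}_2 f\|_{L^1(d\nu_\beta)} \lesssim \|f\|_{L^1(d\nu_\beta)}$.

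The main obstacle is this pointwise bound; the subtlety is that neither $(1-s)^{-2\beta}$ nor the Gaussian factor individually produces the required $y^{2\beta}$ decay, but they balance at the scale $1-s \sim 1/y$. I will prove it by a case split on $A := (1+s)x - (1-s)y$. If $|A| \ge (1-s)y/2$, then $A^2/(8s) \gtrsim (1-s)^2 y^2$; setting $r = (1-s)y$ rewrites the expression as $y^{2\beta} r^{-2\beta} e^{-cr^2}$, which is bounded by $C_\beta y^{2\beta}$ since $-2\beta > 0$ guarantees that $r \mapsto r^{-2\beta} e^{-cr^2}$ has finite supremum on $(0,\infty)$. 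Otherwise, $|A| < (1-s)y/2$ forces $(1+s)x > (1-s)y/2$, hence $1-s \le 4x/y \le 4/y$ (using $1+s<2$ and $x<1$), so $(1-s)^{-2\beta} \lesssim y^{2\beta}$ while the Gaussian factor is at most $1$. In both cases the desired bound follows.
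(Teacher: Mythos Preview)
Your proof is correct and follows essentially the same route as the paper: for $N_3^\gamma$ bound the kernel by a constant and use that $\nu_\gamma$ is finite on $(0,1)$; for $N_2^\beta$ show that the kernel is $\lesssim \chi_{(0,1)}(x)\,x^{-2\beta}$, which lies in $L^1(d\nu_\beta)$ since $\int_0^1 x^{-2\beta}\,d\nu_\beta(x)=\int_0^1 x\,e^{-x^2}\,dx<\infty$.

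The one difference is purely technical: the paper avoids your case split on $A$ by first applying the elementary inequality $(a-b)^2\ge \tfrac12 a^2 - b^2$ with $a=(1-s)y$ and $b=(1+s)x\le 2$, which gives directly
\[
\exp\!\Big(-\frac{|(1+s)x-(1-s)y|^2}{8s}\Big)\lesssim \exp\!\Big(-\frac{((1-s)y)^2}{16}\Big),
\]
and then the substitution $r=(1-s)y$ finishes as in your Case~1. Your two-case argument reaches the same bound and is equally valid, just slightly longer.
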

One could strengthen Lemma \ref{lem:p_lg1} by moving the characteristic functions under the integrals
and then replacing them by $\chi_{\{x/y \ge 3/4 \;\textrm{or}\; x \le 1\}}$. Since this is not needed for
our purpose, we leave the details to interested readers.

\begin{lem} \label{lem:p_gg1}
For any $d \ge 1$, $1 \le \e \le d$ and each $\a \in (-\infty,-1]^{\e} \times (-1,\infty)^{d-\e}$
the operator
$$
\mathbb{K}^{\a}_{*,2}f(x) = 	\chi_{\Rr}(x) \int_{\R^d_{+}} 
 \sup_{1/4<s<1} \mathbb{K}_s^{\a}(x,y) f(y)\, d\nu_{\a}(y),
	\qquad f \ge 0,
$$
is of weak type $(1,1)$ with respect to $\nu_{\a}$; here $\Rr := [1,\infty)^d$.
\end{lem}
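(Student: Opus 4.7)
My plan is to mirror the weak-type $(1,1)$ argument of Section~\ref{sec:Lc_glob} that handles $K_*^{\a,\glob}$, modified for the exotic factors and the restricted domain $x\in\Rr$. The first step is to bound the kernel $\sup_{s\in(1/4,1)}\mathbb{K}_s^\a(x,y)$ pointwise. For $x\in\Rr$ (so each $x_i\ge 1$) and $s\in(1/4,1)$ (so $\sqrt s\in(1/2,1)$), we have $x_i+\sqrt s\simeq x_i$, $s^{-d/2}\simeq 1$, and $(1-s)^{-2\langle\a'\rangle}\lesssim 1$ (the exponent $-2\langle\a'\rangle\ge 2\e$ is nonnegative). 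Thus
\[
\mathbb{K}_s^\a(x,y)\lesssim\prod_{i\in\E}x_i^{-1}y_i^{-2\a_i}\prod_{i\in\E^c}x_i^{-2\a_i-1}\exp\!\Bigl(|x|^2-\tfrac{|(1+s)x-(1-s)y|^2}{8s}\Bigr).
\]
Using the identity $|(1+s)x-(1-s)y|^2=|x-y|^2+2s(|x|^2-|y|^2)+s^2|x+y|^2$, one checks that the exponent is maximised at $s^*=|x-y|/|x+y|\in[0,1]$ with value $(3|x|^2+|y|^2-|x-y||x+y|)/4$. Since $|x-y|\,|x+y|\ge ||x|^2-|y|^2|$ by Cauchy--Schwarz, this sup is at most $|x|^2$, and $\sup_{s\in(1/4,1)}\mathbb{K}_s^\a(x,y)$ is dominated by a function $\mathfrak{K}(x,y)$ of shape $e^{|x|^2}\prod_{i\in\E}x_i^{-1}y_i^{-2\a_i}\prod_{i\in\E^c}x_i^{-2\a_i-1}$. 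Sharper decay in $|y|$ and in the angle $\theta(x,y)$---encoded in the hidden term $|x|\,|y|\sin\theta$ inside $|x-y|\,|x+y|$---is retained when needed.

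Next I establish the weak-type $(1,1)$ bound for the integral operator $\mathfrak{T}f(x)=\int\mathfrak{K}(x,y)f(y)\,d\nu_\a(y)$, which dominates $\mathbb{K}^\a_{*,2}f$. Fix $\lambda>0$ and $0\le f\in L^1(d\nu_\a)$, and pick $z_0\in\Rr$ of minimum norm $r_0=|z_0|$ in the closed level set $\{\mathfrak{T}f\ge\lambda\}$ (nonempty WLOG, with $r_0\ge\sqrt d$; I may assume $r_0>1$, as otherwise a direct estimate forces $\|f\|_{L^1(d\nu_\a)}\gtrsim\lambda$). Split the level set into an outer piece $\{|z|>2r_0\}$ and a shell $\{r_0\le|z|\le 2r_0\}$. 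For the outer piece, combining $\mathfrak{T}f(z_0)\ge\lambda$ with Lemma~\ref{lem:L7L9L10}(c) gives $\nu_\a\bigl(\{|z|\ge 2r_0\}\bigr)\lesssim\|f\|_{L^1(d\nu_\a)}/\lambda$, exactly as in the classical proof. For the shell, I introduce the angular set $H\subset S_+^{d-1}$ and the stopping radius $r(w)=\min\{r\in[r_0,2r_0]:\mathfrak{T}f(rw)\ge\lambda\}$; integrating $d\nu_\a$ radially via Lemma~\ref{lem:F8} and extracting $e^{-r(w)^2}$ from $\mathfrak{T}f(r(w)w)\ge\lambda$ reduces matters to an angular integral on $S_+^{d-1}$ of the form on the right-hand side of Lemma~\ref{lem:L7L9L10}(b) (after a dyadic splitting in $\theta$), yielding the final bound $\|f\|_{L^1(d\nu_\a)}/\lambda$.

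The hard part will be bookkeeping the extraneous factor $\prod_{i\in\E}y_i^{-2\a_i}$ (with $-2\a_i\ge 2$) inside $\mathfrak{K}$, which grows with $y_i$ and at first glance threatens integrability against $d\nu_\a$. Absorbing it into $d\nu_\a(y)=y^{2\a+\1}e^{-|y|^2}dy$ lowers the $y_i$-power in each $\E$-coordinate from $2\a_i+1$ to $1$, matching the polynomial structure of the classical argument in Section~\ref{sec:Lc_glob} and making Lemma~\ref{lem:L7L9L10} directly applicable. Verifying that this absorption passes cleanly through the radial and angular integrations---so that the $r_0^2$ produced by the angular bound cancels exactly the $r_0^{-2}$ coming from the radial factor---is the delicate point.
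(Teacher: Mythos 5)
Your overall architecture --- a pointwise bound on $\sup_{1/4<s<1}\mathbb{K}_s^{\a}(x,y)$ for $x\in\Rr$, followed by a minimal-radius point, an outer ring, and an angular stopping-time argument on the shell --- is exactly the paper's strategy. But there is a genuine gap in the very first step, and it is fatal as written. By estimating $(1-s)^{-2\langle\a'\rangle}\lesssim 1$ at the outset and then taking the supremum of the exponential separately, you arrive at a dominating kernel of the shape
$$
\mathfrak{K}(x,y)=e^{|x|^2}\prod_{i\in\E}x_i^{-1}y_i^{-2\a_i}\prod_{i\in\E^c}x_i^{-2\a_i-1},
$$
and the operator $f\mapsto\int\mathfrak{K}(\cdot,y)f(y)\,d\nu_{\a}(y)$ is \emph{not} of weak type $(1,1)$ with respect to $\nu_{\a}$: it has the rank-one form $g(x)\Lambda(f)$ with $\Lambda(f)=\int\prod_{i\in\E}y_i^{-2\a_i}f\,d\nu_{\a}$, and since $-2\a_i\ge 2$ the weight $\prod_{i\in\E}y_i^{-2\a_i}$ is unbounded, so $\Lambda$ is unbounded on $L^1(d\nu_{\a})$ (test on unit masses concentrated near points with $y_1=R\to\infty$; the level sets of $g\Lambda(f)$ then violate the weak bound). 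Your proposed remedy --- absorbing $y_i^{-2\a_i}$ into the measure --- only produces an estimate against the $L^1$ norm of $f$ with respect to a \emph{different} measure, not against $\|f\|_{L^1(d\nu_\a)}$, so it does not yield the lemma. Nor is the ``sharper decay in $|y|$'' you hope to retain actually available: along a ray $y$ parallel to $x$ with $|y|\to\infty$, your own computation of $s^*$ shows the supremum of the exponential is exactly $e^{|x|^2}$, while $y_i^{-2\a_i}\to\infty$.

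The missing idea is the paper's Lemma~\ref{lem:L3_2}: one must \emph{not} discard $(1-s)^{-2\a_i}$, because $(1-s)^{-2\a_i}(x_iy_i)^{-2\a_i}=\big(x_i\,(1-s)y_i\big)^{-2\a_i}$, and combining this with part of the Gaussian gives, for $x_i\ge 1$,
$$
\big(x_i\,(1-s)y_i\big)^{-2\a_i}\exp\Big(-C\big((1-s)y_i-(1+s)x_i\big)^2\Big)\lesssim x_i^{-4\a_i},
$$
uniformly in $y_i>0$ and $s$. This transfers the growth from $y$ to $x$, where it is harmless against the factor $e^{-|x|^2}$ in $d\nu_\a(x)$, and yields the $y$-free prefactor in the bound $\sup_{1/4<s<1}\mathbb{K}_s^{\a}(x,y)\lesssim x^{-2\a-\1}\exp\big(|x|^2-c|x|^2\t(x,y)^2\big)$. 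With that bound in hand, your stopping-time scheme does go through --- in fact more simply than you anticipate, since the angular integral reduces to $\int_{S^{d-1}}\exp(-cr_0^2\,d(w,\tilde{y})^2)\,d\sigma(w)\lesssim r_0^{1-d}$ and no dyadic decomposition or appeal to Lemma~\ref{lem:L7L9L10}(b) is needed.
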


It remains to give proofs of the four lemmas.

\subsection{Proofs of Lemmas \ref{lem:p_loc}-\ref{lem:p_lg1}}

\begin{proof}[{Proof of Lemma \ref{lem:p_loc}}]
What we need to prove turns out to be a consequence of the classical (non-exotic) result stated in
Theorem \ref{thm:max_Lclr}.
The relation $x' \sim y'$ means that $x_i/2 \le y_i \le 2x_i$ for $i=1,\ldots,\e$.
Observe that
\begin{align*}
\mathbb{K}_{*,1}^{\a}f(x)
& \le \sup_{0< s < 1} \int_{y'\sim x'} 
(x' y')^{-2\a'}
K_s^{-\a'}(x',y')K_s^{\a''}(x'',y'') f(y)\, 
	d\nu_{\a}(y)\\
& \simeq \sup_{0< s < 1} \int_{y'\sim x'} K_s^{\check{\a}}(x,y) f(y)\, d\nu_{\check{\a}}(y),
\end{align*}
where $\check{\a} = (-\a',\a'') \in (-1,\infty)^d$. Thus, denoting for $n \in \Z^{\e}$
\begin{align*}
S_n & = \big\{x' \in \R_+^{\e} : 2^{n_i} < x'_i \le 2^{n_i+1} \; 
	\textrm{for}\; i=1,\ldots,\e\big\} \times \R^{d-\e}_+,\\
{S}^{\angle}_n & = \big\{x' \in \R_+^{\e} : 2^{n_i-1} < x'_i \le 2^{n_i+2} \; 
	\textrm{for}\; i=1,\ldots,\e\big\} \times \R^{d-\e}_+,
\end{align*}
we get
\begin{align*}
\nu_{\a}\{x\in\R^d_{+} : \mathbb{K}^{\a}_{*,1}f(x) > \lambda\} 
& = \sum_{n \in \Z^{\e}} \nu_{\a}\{x\in S_n : \mathbb{K}^{\a}_{*,1}(\chi_{{S}^{\angle}_n}f)(x) > \lambda\} \\
& \lesssim \sum_{n \in \Z^{\e}} 2^{4 \langle \a' n \rangle} \nu_{\check{\a}}\{ x\in S_n : 
	K_{*}^{\check{\a}}(\chi_{{S}^{\angle}_n}f)(x) > c\lambda\},
\end{align*}
where $c>0$ depends only on $\a$. Since $K_{*}^{\check{\a}}$ is of weak type $(1,1)$ with respect
to $\nu_{\check{\a}}$, see Theorem~\ref{thm:max_Lclr}, we further obtain
\begin{align*}
\nu_{\a}\big\{x\in\R^d_{+} : \mathbb{K}^{\a}_{*,1}f(x) > \lambda\big\}
& \lesssim \sum_{n \in \Z^{\e}} 2^{4 \langle \a' n \rangle}\frac{1}{\lambda} \int_{{S}^{\angle}_n}f(x)\,
	d\nu_{\check{\a}}(x) 
\simeq \frac{1}{\lambda} \sum_{n \in \Z^{\e}} \int_{{S}^{\angle}_n}f(x)\, d\nu_{\a}(x).
\end{align*}
Since the ${S}_n^{\angle}$ have finite overlap, the last sum is comparable with $\|f\|_{L^1(d\nu_{\a})}$.
The conclusion follows.
\end{proof}

\begin{proof}[{Proof of Lemma \ref{lem:p_glob_0}}]
The kernel of $N^{\beta}_{1}$ is comparable with, see Section \ref{ssec:rrs},
$$
N_1(x,y) = \chi_{\{y < x/2 \;\textrm{or}\; y>2x\}} (xy)^{-2\beta} 
\sup_{0<s \le 1/4} s^{-1/2}
\big(x+\sqrt{s}\big)^{2\beta-1}
	\exp\bigg( - \frac{|(1+s)x-(1-s)y|^2}{8s}\bigg) e^{x^2}.
$$
By the triangle inequality we see that
\[
|(1+s)x-(1-s)y| \ge \frac{x \vee y}{8}, \qquad 0<s \le 1/4,
\]
provided that $y < x/2$ or $y>2x$. Combining this with Lemma \ref{lem:Gauss2} (specified to 
$\kappa = -1/2$, $\gamma = \b -1/2$, $c = 1/512$, $A=x^2$, $z = x \vee y$) we obtain
\begin{align*}
N_1(x,y) & \le (xy)^{-2\beta} 
\sup_{0<s \le 1/4} s^{-1/2}
\big(x+\sqrt{s}\big)^{2\beta-1}
\exp\bigg(-\frac{(x \vee y)^2}{512s}\bigg)e^{x^2} \\
& \lesssim (xy)^{-2\beta} (x \vee y)^{2\beta-2} e^{x^2}
= (x \wedge y)^{-2\beta} (x \vee y)^{-2} e^{x^2}.
\end{align*}
Therefore the proof will be finished once we ensure that
\[
\int_{0}^{\infty} (x \wedge y)^{-2\beta} (x \vee y)^{-2} e^{x^2} \, d\nu_{\beta}(x)
\lesssim
1, \qquad y > 0,
\]
which is straightforward.
\end{proof}

\begin{proof}[{Proof of Lemma \ref{lem:p_lg1}}]
First we treat $N^{\beta}_{2}$.
Taking into account that $x \le 1$, $1/4< s< 1$ and $\beta$ is negative, we see that the relevant
kernel is controlled by
$$
N_2(x,y) = \chi_{(0,1)}(x) (xy)^{-2\beta} \sup_{1/4<s<1} (1-s)^{-2\beta} 
	\exp\Big( -\frac{1}{16} |(1-s)y|^2\Big).
$$
This implies $N_2(x,y) \lesssim \chi_{(0,1)}(x) x^{-2\beta}$. Since
$\int_0^1 x^{-2\beta} d\nu_{\beta}(x) < \infty$, the conclusion follows.

Passing to $N^{\gamma}_3$, it is immediate to see that the kernel is dominated by a constant independent
of $x$ and $y$, so the conclusion is trivial ($\nu_{\gamma}$ is a finite measure).
\end{proof}

\subsection{Proof of Lemma \ref{lem:p_gg1}}

We now show the remaining, more difficult Lemma \ref{lem:p_gg1}.
We will need two auxiliary results which are counterparts of
Lemmas \ref{lem:L3}, \ref{lem:F8} and \ref{lem:L7L9L10}.

\begin{lem}\label{lem:L3_2}
Let $\gamma \le 0$ and $C>0$ be fixed. Then
\[
(ab)^{-2 \gamma} \exp(-C(b-a)^2) 
\lesssim
a^{- 4\gamma}, \qquad a \ge 1, \quad b > 0.
\]
\end{lem}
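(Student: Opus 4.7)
The plan is to mimic the case-splitting used for Lemma \ref{lem:L3}, but tailored to the hypothesis $a \ge 1$ (which replaces the role previously played by the uniform factor $(a+1)^{2\gamma}$). First I would rewrite the desired estimate, after dividing both sides by $a^{-4\gamma}$, as
\[
(b/a)^{-2\gamma}\exp\!\big(-C(b-a)^2\big) \lesssim 1, \qquad a \ge 1, \quad b > 0,
\]
which is the form one actually proves.

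Next I would split into two regimes according to the size of $b$ relative to $a$. In the regime $0 < b \le a$, the quotient $b/a$ lies in $(0,1]$, and since $-2\gamma \ge 0$ we get $(b/a)^{-2\gamma} \le 1$; the exponential is also bounded by $1$, so the estimate is immediate. In the regime $b > a$, write $t = b/a \ge 1$ and observe that $(b-a)^2 = a^2(t-1)^2 \ge (t-1)^2$ thanks to $a \ge 1$. It then suffices to bound
\[
M := \sup_{t \ge 1} t^{-2\gamma}\exp\!\big(-C(t-1)^2\big),
\]
which is finite because the polynomial factor $t^{-2\gamma}$ (of non-negative degree) is dominated by the Gaussian decay as $t \to \infty$, and $t^{-2\gamma}$ is bounded on any compact set.

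I do not foresee a real obstacle here; the statement is a routine variant of Lemma \ref{lem:L3} and its proof boils down to the two-line case analysis above, with $a \ge 1$ being used exactly once to discard the $a^2$ in the exponent. The only point requiring a little care is tracking the sign of $-2\gamma$ to make sure the direction of the monotonicity inequalities is correct in each regime.
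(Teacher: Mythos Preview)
Your proof is correct. The paper takes a slightly shorter route: rather than redoing a case analysis, it notes that $-2\gamma \ge 0$ gives $(ab)^{-2\gamma} \le (ab+1)^{-2\gamma}$, and then Lemma~\ref{lem:L3} (applied with parameter $-2\gamma$ in place of $\gamma$) already yields
\[
(ab+1)^{-2\gamma}\exp\big(-C(b-a)^2\big) \lesssim (a+1)^{-4\gamma} \simeq a^{-4\gamma},
\]
the last comparability coming from $a \ge 1$. Your self-contained argument has the minor advantage of showing explicitly where $a \ge 1$ enters (to discard the factor $a^2$ in the exponent), while the paper's reduction is a two-line proof that avoids repeating any case splitting by leaning on the existing lemma.
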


\begin{proof}
Since $(ab)^{-2 \gamma} \le (ab + 1)^{-2 \gamma}$ and $a+1 \simeq a$,
the asserted estimate is a direct consequence of Lemma \ref{lem:L3}.
\end{proof}

Notice that Lemma~\ref{lem:L3_2} is not true for all $a,b>0$ (take $a \to 0$ and $b\simeq 1$).

\begin{lem}\label{lem:F8&L7L9L10_2}
Let $\gamma \in \R$, $d \ge 1$ and $\a \in \R^d$ be fixed. Then one has the estimates
\begin{itemize}
\item[(a)]
\[
\int_a^\infty x^\gamma e^{-x^2} \, dx 
\simeq
a^{\gamma - 1} e^{-a^2},
\qquad  a \ge 1, 
\]
\item[(b)]
\[
\nu_\a  \big\{z \in \Rr : |z| \ge a \big\}
\lesssim
e^{-3a^2/4}, \qquad  a \ge 1. 
\]
\end{itemize}
\end{lem}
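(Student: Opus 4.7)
\textbf{Proof plan for Lemma \ref{lem:F8&L7L9L10_2}.}

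For part (a), the plan is to study the ratio
\[
R(a) := \frac{\int_a^\infty x^\gamma e^{-x^2}\, dx}{a^{\gamma-1} e^{-a^2}}
\]
as a function on $[1,\infty)$. It is continuous and strictly positive there, so to establish the two-sided comparison it suffices to verify that $R$ has a positive finite limit as $a \to \infty$. Both numerator and denominator tend to $0$ at infinity, so I would apply L'H\^opital's rule: the derivative of the numerator is $-a^\gamma e^{-a^2}$, while that of the denominator is $e^{-a^2}\bigl[(\gamma-1)a^{\gamma-2} - 2 a^\gamma\bigr]$, and the quotient simplifies to $-1/\bigl((\gamma-1)a^{-2} - 2\bigr) \to 1/2$. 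Hence $R(a) \to 1/2$, and $R$ is pinched between positive constants on $[1,\infty)$, giving (a).

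For part (b), the idea is to peel off a Gaussian factor. Writing $e^{-|z|^2} = e^{-3|z|^2/4} \cdot e^{-|z|^2/4}$ and using $e^{-3|z|^2/4} \le e^{-3a^2/4}$ on the set $\{|z|\ge a\}$, I would estimate
\[
\nu_\a\bigl\{z \in \Rr : |z| \ge a\bigr\} \;\le\; e^{-3a^2/4} \int_{\Rr} z^{2\a+\1} e^{-|z|^2/4}\, dz
\;=\; e^{-3a^2/4} \prod_{i=1}^d \int_1^\infty z_i^{2\a_i+1} e^{-z_i^2/4}\, dz_i.
\]
Each one-dimensional integral is finite because Gaussian decay dominates any power of $z_i$, so the product is a constant depending only on $\a$ and $d$, which yields (b).

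The only mild obstacle is in part (a), where $\gamma$ is allowed to be any real number, so Lemma \ref{lem:F8} cannot be invoked directly (it requires $\gamma>-1$). The L'H\^opital step bypasses this uniformly without case analysis, which is why I prefer it to an elementary splitting of $[a,\infty)$ into $[a,2a]$ and $[2a,\infty)$ (which would also work but requires handling the sign of $\gamma$ separately). No other step is delicate: part (b) reduces to absorbing half the Gaussian and using finiteness of a fixed product of one-dimensional moments.
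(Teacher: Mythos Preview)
Your argument is correct. Part (a) matches the spirit of the paper exactly: the paper simply declares (a) ``trivial'' here, but in the proof of Lemma~\ref{lem:F8} (case $b=\infty$) it uses precisely your continuity-plus-L'H\^opital scheme, so you are on the same track.

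For part (b) you take a genuinely different, and more elementary, route. The paper replaces each exponent $2\a_i+1$ by $\gamma_i := \max\{2\a_i+1,0\}$ (legal since $z_i \ge 1$ on $\Rr$), so that $\gamma \in (-1,\infty)^d$, and then feeds the resulting integral into Lemma~\ref{lem:L7L9L10}\,(c) to obtain $\int_{|z|\ge a} z^{\gamma}e^{-|z|^2}\,dz \simeq a^{\langle\gamma\rangle+d-2}e^{-a^2}$, which is then crudely bounded by $e^{-3a^2/4}$. Your approach of splitting $e^{-|z|^2}=e^{-3|z|^2/4}e^{-|z|^2/4}$ and recognizing the remainder as a finite product of one-dimensional moments avoids both the exponent truncation trick and the appeal to Lemma~\ref{lem:L7L9L10}. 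The paper's route yields the sharper intermediate asymptotic $a^{\langle\gamma\rangle+d-2}e^{-a^2}$, but since that precision is immediately discarded, your direct argument loses nothing for the purposes of the lemma.
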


\begin{proof}
Part (a) is trivial, so let us focus on item (b).
We assume that $d \ge 2$ since the case $d=1$ easily follows from (a).
Let $\gamma_i = \max\{2\a_i + 1, 0\}$, $i=1,\ldots,d$. Then, obviously 
\[
z^{2\a + \1} \le z^\gamma, \qquad z \in \Rr,
\]
and an application of Lemma \ref{lem:L7L9L10} (c) with $b = \infty$ produces
\begin{align*}
\nu_\a \big\{z \in \Rr : |z| \ge a \big\}
& \le 
\int_{|z| \ge a} z^{\gamma} e^{-|z|^2} \, dz
\simeq
a^{|\gamma|+ d-2} e^{-a^2}
\lesssim
e^{-3a^2/4},
\end{align*}
as desired.
\end{proof}

\begin{proof}[{Proof of Lemma \ref{lem:p_gg1}}]
We first show that there exists $c>0$ such that
\begin{align}\label{est12}
\sup_{1/4<s<1} \mathbb{K}_s^{\a}(x,y)
\lesssim
x^{-2\a - \1} \exp\big(|x|^2 - c |x|^2 \t^2\big), 
\qquad x \in \Rr, \quad y \in \RR,
\end{align}
where $\t = \t(x,y)$.

Taking into account the fact that $1/4<s<1$ and then using Lemma~\ref{lem:L3_2}
(with $\gamma = \a_i$, $a = (1+s)x_i \simeq x_i \ge 1$,  $b = (1-s)y_i$, $C = 1/16$, 
$i = 1, \ldots, \e$), we infer that
\begin{align*}
\mathbb{K}_s^{\a}(x,y)
& \lesssim
(1-s)^{-2 \langle \a' \rangle}
(x' y')^{-2\a'}
(x')^{2\a' - \1'} (x'')^{-2\a'' - \1''}
\exp\bigg( |x|^2 - \frac{|(1+s)x-(1-s)y|^2}{8}\bigg) \\
& \lesssim
x^{-2\a - \1} \exp\bigg( |x|^2 - \frac{|(1+s)x-(1-s)y|^2}{16}\bigg),
\end{align*}
uniformly in $1/4<s<1$, $x \in \Rr$ and $y \in \RR$.
Finally, applying \eqref{est11} we obtain \eqref{est12}. 

Observe that in case $d=1$ the conclusion easily follows because from \eqref{est12}
we see that the relevant kernel is controlled by 
$\chi_{ \{ x \ge 1 \} } x^{-2\a - 1}e^{x^2}$, and this function of $x$ is in weak $L^1(d\nu_{\a})$,
even with some margin. Thus from now on we assume that $d \ge 2$. 

In order to finish the proof of Lemma~\ref{lem:p_gg1} it suffices to show that for $\a \in \R^d$
and any fixed $c > 0$ the operator
\[
V^{\a} f (x) 
=
\chi_{\Rr}(x)\, x^{-2\a - \1} e^{|x|^2} 
\int_{\RR} \exp( - c |x|^2 \t^2) f(y) \, d\nu_{\a}(y),
\qquad x \in \RR, \quad 0 \le f \in L^1(d\nu_{\a}),
\]
is of weak type $(1,1)$ with respect to $\nu_{\a}$; here $\t = \t(x,y) \in [0,\pi/2]$.
We will proceed in a similar way as in the proof of the weak type $(1,1)$ estimate for $U^{\a}$ 
in Section \ref{sec:Lc_glob}.

Let $\lam > 0$ and $0 \le f \in L^1(d\nu_\a)$. Let $z_0 \in \Rr$ be such that
\[
|z_0| = \min \big\{|z|: z \in \Rr, \,\, 
V^{\a} f(z) \ge   \lambda  \big\}, 
\qquad r_0 := |z_0| \ge 1.
\]
The definition of $z_0$ is correct because the level set above is closed in $\Rr$
(the function $x \mapsto V^{\a} f(x)$ is continuous on $\Rr$) and we may assume that it is nonempty. 
This forces $V^{\a} f(z_0) \ge \lambda$, which in particular means that
\[ 
z_0^{-2\a - \1} e^{r_0^2} 
\|f\|_{L^1(d\nu_\a)} \ge \lam.
\]
Combining this with Lemma~\ref{lem:F8&L7L9L10_2} (b) (taken with $a = 2r_0 \ge 2$) we get
\begin{align*}
\nu_\a  \big\{ z \in \Rr : |z| \ge 2r_0 \big\}
& \lesssim 
e^{-3r_0^2} 
\le 
\frac{\|f\|_{L^1(d\nu_\a)} }{ \lam }
z_0^{-2\a - \1} e^{-2r_0^2} 
\lesssim
\frac{\|f\|_{L^1(d\nu_\a)} }{ \lam },
\end{align*}
the last estimate above since $1 \le (z_0)_i \le |z_0| = r_0$, $i = 1, \ldots, d$.
Thus we reduced our considerations to the region 
$r_0 \le |z| \le 2 r_0$.

Define 
\[
H = \big\{ w \in S_+^{d-1} : \textrm{there exists $\rho \in [r_0, 2r_0]$
	such that $V^{\a} f(\rho w) \ge \lam$} \big\},
\]
and for every $w \in H$ let
\[
r(w) = \min \left\{ \rho \in [r_0, 2r_0] : V^{\a} f(\rho w) \ge \lam \right\}.
\]
By continuity, 
the minimum exists and
$V^{\a} f(r(w) w) \ge \lam$ for $w \in H$. Further, since $r(w) \simeq r_0$, we see that 
\begin{align*}
r_0^{-2 \langle \a \rangle - d} w^{-2\a - \1} e^{r(w)^2} 
\int_{\RR} 
\exp(- {c} r_0^2 \t^2) f(y) \, d\nu_\a (y) \gtrsim \lam,
\end{align*}
where $\t = \t(w,y)$.

Using Lemma~\ref{lem:F8&L7L9L10_2} (a) (specified to
$\gamma = 2 \langle \a \rangle + 2 d - 1$, $a = r(w) \simeq r_0 \ge 1$)
and then the above estimate, we arrive at
\begin{align*}
& \nu_\a \big\{ z \in \Rr : r_0 \le |z| \le 2r_0, \, \, V^{\a} f (z) \ge \lam \big\} \\
& \le 
\nu_\a \big\{ rw : w \in H, \, \, r \ge r(w) \big\} 
=
\int_H \int_{r(w)}^\infty r^{2 \langle \a \rangle + 2d - 1} e^{-r^2} 
\, dr \, w^{2\a + \1} \, d\sigma(w) \\
& \simeq 
\int_H r_0^{2 \langle \a \rangle + 2d - 2} e^{-r(w)^2} w^{2\a + \1} \, d\sigma(w) \\
& \lesssim
\lam^{-1} \int_H r_0^{d-2} \int_{\RR}
\exp(- {c} r_0^2 \t^2)
f(y) \, d\nu_\a (y) \, d\sigma(w).
\end{align*}
To finish the proof of the weak type $(1,1)$ for $V^{\a}$, it is enough to check that
\[
\int_{S^{d-1}} \exp(- {c} r_0^2 d(w,\tilde{y})^2) \, d\sigma(w)
\lesssim r_0^{2 - d} ,
\qquad  y \in \RR, \quad r_0 \ge 1.
\]
This integral is comparable to $\int \exp(- {c} r_0^2 |\xi|^2) \, d\xi$ taken over the unit ball
in $\mathbb{R}^{d-1}$. It is thus bounded by a constant times $r_0^{1 - d}$.

The proof of Lemma \ref{lem:p_gg1} is complete.
\end{proof}

Now Theorem \ref{thm:weakr} and thus also Theorem \ref{thm:weak} are proved.

\section{Exotic Bessel semigroup maximal operator} \label{sec:Bes}

To start with, we focus on the one-dimensional situation.

\subsection{Description of the classical and exotic Bessel contexts in dimension one}
Recall from Section \ref{sec:intro} that the Bessel differential operator is 
\[
B_{\nu} = -\frac{d^2}{dx^2} - \frac{2\nu+1}{x} \frac{d}{dx}.
\]
For a given $\nu \in \R$ we consider $B_{\nu}$ acting on functions on $\R_+$.
This operator is formally symmetric in $L^2(d\eta_{\nu})$, where, according to the
notation of Section \ref{sec:Lagcls}, 
\[
d\eta_{\nu} (x) = x^{2\nu + 1} \, dx, \qquad x > 0.
\]

When $\nu > -1$, there exists a classical self-adjoint extension of $B_{\nu}$ (acting initially on
$C_c^{2}(\R_+)$), from now on denoted by $B_{\nu}^{\textrm{cls}}$, whose spectral decomposition
is given via the (modified) Hankel transform. To make this more precise,
consider for each $z > 0$ and $\nu > -1$ the function 
\[
\vp_z^{\nu} (x) = (xz)^{-\nu} J_{\nu}(xz), \qquad x>0, 
\]
where $J_{\nu}$ denotes the Bessel function of the first kind and order $\nu> -1$.
It is well known that $\vp_z^{\nu}$ is an eigenfunction of $B_{\nu}$ with the corresponding eigenvalue $z^2$,
that is to say $B_{\nu} \vp_z^{\nu} = z^2 \vp_z^{\nu}$.
The (modified) Hankel transform is defined initially for, say, $f \in C_c (\R_+)$ as
\[
h_\nu f(z) = \int_0^\infty f(x) \varphi_z^{\nu} (x) \, d\eta_\nu (x), 
\qquad z> 0, \quad \nu > -1,
\]
and plays in the Bessel context a role similar to that of the Fourier transform in the Euclidean setup.
It is well known that $h_\nu$ extends to an isometry on $L^2(d\eta_{\nu})$ and $h_\nu^{-1} = h_\nu$.
Further, for $f \in C_c^{2}(\R_+)$ we have
\[
h_\nu \big( B_{\nu} f \big) (z) =  z^2  h_\nu f (z), \qquad z> 0.
\]
Therefore the classical self-adjoint extension of $B_{\nu}$ is defined by
\[
B_{\nu}^{\textrm{cls}} f = h_\nu \big( z^2  h_\nu f (z) \big),
\]
on the domain $\dom B_{\nu}^{\textrm{cls}}$ consisting of all $f \in L^2(d\eta_\nu)$ such that
$z^2  h_\nu f (z) \in L^2(d\eta_\nu)$.

The classical Bessel semigroup $W_t^\nu = \exp(-t B_{\nu}^{\textrm{cls}})$
is given in $L^2(d\eta_\nu)$ by the spectral formula
\[
W_t^\nu f = h_\nu \big(e^{- t z^2}  h_\nu f (z) \big),
\qquad t \ge 0.
\]
This semigroup has in $L^2(d\eta_\nu)$ the integral representation
$$
W_t^{\nu} f (x) = 
\int_0^\infty W_t^{\nu}(x,y) f(y) \, d\eta_\nu (y), 
\qquad x,t>0,
$$
with the integral kernel 
\begin{align*}
W_t^{\nu}(x,y) & = 
\int_0^\infty e^{- t z^2} \vp_z^{\nu} (x) \vp_z^{\nu} (y) \, d\eta_\nu (z) \\
& =
\frac{1}{2t} \exp\Big( -\frac{1}{4t}(x^2 + y^2) \Big)
(xy)^{-\nu} I_{\nu}\Big( \frac{xy}{2t}\Big), \qquad x,y,t > 0.
\end{align*}
Using standard properties of the modified Bessel function, we see that $W_t^{\nu}(x,y)$
is strictly positive and smooth in $(x,y,t) \in \R^3_+$.
Further, an application of \eqref{Basy*} shows that
the integral defining $W_t^{\nu} f$ converges absolutely for any 
$f \in L^p(d\eta_\nu)$, $1 \le p \le \infty$, and thus provides 
a pointwise definition of $W_t^{\nu} f$, $t>0$, for all 
$f \in L^p(d\eta_\nu)$, $1 \le p \le \infty$.
From \cite[Proposition 6.2]{NS} we know that $\{W_t^{\nu}\}$ is a Markovian symmetric diffusion semigroup.
In particular, $W_t^{\nu} \mathds{1} = \mathds{1}$ and $\{ W_t^{\nu} \}$ is a positive and symmetric
semigroup of contractions on each $L^p(d\eta_\nu)$, $1 \le p \le \infty$.

We now pass to the exotic situation, which occurs when $0 \ne \nu < 1$.
It turns out that for these $\nu$ there exists a self-adjoint extension of $B_{\nu}$ (considered
initially on $C_c^{2}(\R_+)$) expressible in terms of the (modified) Hankel transform, but
in a different way than $B_{\nu}^{\textrm{cls}}$. In order to describe the details, observe first
that for each $z > 0$ the function $x \mapsto (x z)^{-2\nu} \vp_z^{-\nu}(x)$ is an eigenfunction of
$B_{\nu}$ with the corresponding eigenvalue $z^2$. Next, we introduce the exotic (modified) Hankel
transform defined initially for, say, $f \in C_c (\R_+)$ as
\begin{align*}
\widetilde{h}_\nu f(z) = \int_0^\infty f(x) (x z)^{-2\nu} \vp_z^{-\nu} (x) \, d\eta_\nu (x) 
= z^{-2\nu} h_{- \nu} \big(x^{2\nu} f(x) \big)(z), 
\qquad z> 0. 
\end{align*}
Using the above connection with the classical Hankel transform, we see that $\widetilde{h}_\nu$ inherits
some properties of $h_{\nu}$. In particular, it extends to an isometry on $L^2(d\eta_\nu)$ and
$\widetilde{h}_\nu^{-1} = \widetilde{h}_\nu$.
Further, a computation shows that $x^{2\nu} B_{\nu} f = B_{- \nu} \big(x^{2\nu} f \big)$, which implies
$\widetilde{h}_\nu \big( B_{\nu} f \big) (z) =  z^2  \widetilde{h}_\nu f (z)$, for $z>0$ and
$f \in C_c^{2}(\R_+)$. This leads us to define the exotic self-adjoint extension of $B_{\nu}$ as
\[
B_{\nu}^{\textrm{exo}} f 
= \widetilde{h}_\nu \big( z^2 \widetilde{h}_\nu f (z) \big),
\]
on the domain $\dom B_{\nu}^{\textrm{exo}}$ consisting of all $f \in L^2(d\eta_\nu)$ such that 
$z^2  \widetilde{h}_\nu f (z) \in L^2(d\eta_\nu)$.

The exotic Bessel semigroup 
$\widetilde{W}_t^\nu = \exp(-t B_{\nu}^{\textrm{exo}})$ 
generated by $- B_{\nu}^{\textrm{exo}}$ is given in $L^2(d\eta_\nu)$ by the spectral formula
\[
\widetilde{W}_t^\nu f = 
\widetilde{h}_\nu \big(e^{- t z^2}  \widetilde{h}_\nu f (z) \big),
\qquad t \ge 0.
\]
The related integral representation is 
$$
\widetilde{W}_t^{\nu} f (x) = 
\int_0^\infty \widetilde{W}_t^{\nu}(x,y) f(y) \, d\eta_\nu (y), 
\qquad x,t > 0,
$$
where the integral kernel is expressed as
\begin{align} \nonumber
\widetilde{W}_t^{\nu}(x,y) & = 
\int_0^\infty e^{- t z^2} (x z)^{-2\nu} \vp_z^{-\nu} (x) \, (y z)^{-2\nu} \vp_z^{-\nu} (y) 
\, d\eta_\nu (z) \\ \label{relBker}
& =
(x y)^{-2\nu} W_t^{- \nu}(x,y), \qquad x,y,t > 0.
\end{align}
Since the exotic kernel can be expressed in a simple way in terms of the classical one, it inherits some
properties of the latter kernel. In particular, $\widetilde{W}_t^{\nu}(x,y)$ is strictly positive and smooth
in $(x,y,t) \in \R_+^3$. Further, using \eqref{Basy*} it is straightforward to check that for $\nu < 0$
the integral defining $\widetilde{W}_t^{\nu} f$ converges absolutely for every $f \in L^p(d\eta_{\nu})$,
$1 \le p \le \infty$. Furthermore, for such $\nu$ the operators $\{ \widetilde{W}_t^{\nu} \}$ satisfy the
semigroup property on each $L^p(d\eta_{\nu})$, $1 \le p \le \infty$, which is a direct consequence of
\eqref{relBker}. 
The case $0 < \nu < 1$ is more subtle, as in the Laguerre context.
Indeed, in this range of $\nu$ we have a pencil type phenomenon. More precisely, for each $t > 0$ fixed, 
$\widetilde{W}_{t}^{\nu}$ is well defined on $L^p(d\eta_{\nu})$
and maps this space into itself if and only if $\nu+1 < p < (\nu+1)/\nu$. 

Observe that in view of \eqref{iI} we have the bound
\begin{equation} \label{denB}
	\widetilde{W}_t^{\nu}(x,y) < W_t^{\nu}(x,y), \qquad x,y,t>0, \quad -1 < \nu < 0.
\end{equation}
Moreover, note that the classical and exotic Bessel settings have probabilistic interpretations analogous
to those in the Laguerre case, see Section \ref{sec:intro} and also \cite[Appendix 1]{BS}.

Another interesting fact is that for $\nu < 0$ the operators 
$\widetilde{W}_t^{\nu}$, $t>0$, are contractive on $L^\infty$. We even have the strict estimate
\begin{equation} \label{bescontr}
\widetilde{W}_t^{\nu} \mathds{1}(x) < 1, \qquad x,t>0, \quad \nu < 0.
\end{equation}
Indeed, proceeding as in the Laguerre context (see Section \ref{sec:Lagexo})
and using the explicit form of $\widetilde{W}_t^{\nu}(x,y)$ we get
$$
\widetilde{W}_t^{\nu} \mathds{1} (x) = 
	\frac{1}{2t} x^{-\nu} \exp\Big( -\frac{x^2}{4t}\Big) \, \mathcal{J}(x),
$$
where
$$
\mathcal{J}(x) 
	= \int_0^{\infty} y^{\nu+1} \exp\Big( -\frac{y^2}{4t}\Big) \, I_{-\nu}\Big(\frac{xy}{2t}\Big) \, dy 
  = \frac{2^{2\nu+1}}{\Gamma(1-\nu)} t^{\nu+1}x^{-\nu} {_1F_1}\Big( 1; 1-\nu; \frac{x^2}{4t}\Big);
$$
here the last identity is obtained by means of \cite[Lemma 2.2]{NS}. Consequently, with the notation
of \cite[Section 2]{NS},
$$
\widetilde{W}_t^{\nu} \mathds{1} (x) = H_{1,1-\nu}\Big( \frac{x^2}{4t} \Big), \qquad x,t > 0.
$$
By the proof of \cite[Lemma 2.3]{NS}, $\widetilde{W}_t^{\nu} \mathds{1} (x)< 1$ for $x,t > 0$ and
$\|\widetilde{W}_t^{\nu} \mathds{1} \|_{\infty} = 1$ for all $t > 0$.

\subsection{Multi-dimensional exotic Bessel context and the maximal theorem}
Now we are ready to introduce the multi-dimensional framework, which arises by `tensorizing'
the one-dimensional classical and exotic Bessel settings. 
Let $d \ge 1$ and $\nu \in \R^d$ be a multi-parameter, and recall that
\[
d\eta_{\nu} (x) = x^{2\nu + \1} \, dx, \qquad x \in \RR.
\]
In what follows we assume that $\nu \in A(\E)$ for some fixed $\E \subset \{1,\ldots,d\}$,
see \eqref{def:ABC}.

For each $z \in \RR$ define
\[
\Phi_z^{\nu,\E}  = 
\bigotimes_{i=1}^d 
\begin{cases}
(x_i z_i )^{-2 \nu_i} \vp_{z_i}^{- \nu_i} , & \quad i \in \E,\\
\vp_{z_i}^{\nu_i} , & \quad i \notin \E.
\end{cases}
\]
These are eigenfunctions of the multi-dimensional Bessel differential operator
$\mathbb{B}_{\nu} = \sum_{i=1}^d B_{\nu_i}$ (here $B_{\nu_i}$ acts on the $i$th coordinate variable)
with the corresponding eigenvalues $|z|^2$.
For $f \in C_c(\RR)$ the generalized (modified) Hankel transform is given by
\[
\mathfrak{h}_{\nu,\E} f(z) = \int_{\RR} f(x) \Phi_z^{\nu,\E} (x) \, d\eta_\nu (x), 
\qquad z \in \RR. 
\]
Using properties of the one-dimensional
Hankel and exotic Hankel transforms, it can easily be justified that $\mathfrak{h}_{\nu,\E}$
extends to an isometry on $L^2(d\eta_{\nu})$ and coincides with its inverse,
$\mathfrak{h}_{\nu,\E}^{-1} = \mathfrak{h}_{\nu,\E}$.

We consider the self-adjoint extension of $\mathbb{B}_{\nu}$ acting initially on $C_c^{2}(\RR)$ given by
\[
\mathbb{B}_{\nu,\E} f =  \mathfrak{h}_{\nu,\E} \big( |z|^2  \mathfrak{h}_{\nu,\E} f (z) \big),
\]
on the domain $\dom \mathbb{B}_{\nu,\E}$ consisting of all $f \in L^2(d\eta_\nu)$ such that
$|z|^2  \mathfrak{h}_{\nu,\E} f (z) \in L^2(d\eta_\nu)$. Observe that for
$\E = \emptyset$ we recover the classical multi-dimensional Bessel context considered in
\cite{BCC0,BCC,BCDFR,BCN,CaSz}, among many other papers. Otherwise, that is when $\E \neq \emptyset$,
the exotic situation occurs.

The semigroup $\mathbb{W}_t^{\nu,\E} = \exp(-t \mathbb{B}_{\nu,\E} )$ is given in 
$L^2(d\eta_{\nu})$ via the $\mathfrak{h}_{\nu,\E}$,
\[
\mathbb{W}_t^{\nu,\E} f = 
\mathfrak{h}_{\nu,\E} \big( e^{-t |z|^2} \mathfrak{h}_{\nu,\E} f (z) \big),
\qquad t \ge 0.
\]
Further, it has in $L^2(d\eta_{\nu})$ the integral representation
\begin{align}\label{eBir}
\mathbb{W}_t^{\nu,\E} f (x) = 
\int_{\RR} \mathbb{W}_t^{\nu,\E} (x,y) f(y) \, d\eta_\nu (y), 
\qquad x \in \RR, \quad t>0,
\end{align}
where the kernel is a product of the one-dimensional kernels,
$$
\mathbb{W}_t^{\nu,\E}(x,y) = 
	\prod_{i \in \E} \widetilde{W}_t^{\nu_i}(x_i,y_i)
	\prod_{i \in \E^c} W_t^{\nu_i}(x_i,y_i), \qquad  x,y \in \R^d_+, \quad t>0.
$$
Obviously, $\mathbb{W}_t^{\nu,\E}(x,y)$ is strictly positive and smooth
in $(x,y,t) \in \R^{2d+1}_+$, which
follows from the analogous properties of $\widetilde{W}_t^{\nu_i}(x_i,y_i)$ and $W_t^{\nu_i}(x_i,y_i)$.

As in the Laguerre context, when $\mE(\nu) < 0$ the integral formula \eqref{eBir} makes sense
and provides a pointwise definition of $\mathbb{W}_t^{\nu,\E} f$ on all $L^p(d\eta_{\nu})$ spaces,
$1\le p \le \infty$. Further, by \eqref{bescontr}
we have the inequality $\mathbb{W}_t^{\nu,\E} \mathds{1}(x) \le 1$,
$x \in \R^d_{+}$, $t>0$, which is strict if $\E \neq \emptyset$ (for $\E = \emptyset$ one
has $\mathbb{W}_t^{\nu,\emptyset} \mathds{1} = \mathds{1}$).
This shows that $\{\mathbb{W}_t^{\nu,\E}\}$ is a semigroup of contractions on each
$L^p(d\eta_{\nu})$, $1\le p \le \infty$, and consequently 
it is a submarkovian symmetric diffusion semigroup, which is Markovian if and only if $\E = \emptyset$.

In case $\mE(\nu) > 0$ a pencil type phenomenon occurs. More precisely, for each $t > 0$
fixed, $\mathbb{W}_{t}^{\nu,\E} f$ is defined on $L^p(d\eta_{\nu})$
and maps this space into itself if and only if 
$1 + \mE(\nu) < p < 1 + 1/\mE(\nu)$.

The principal object of our study in this section is the maximal operator
$$
\mathbb{W}_{*}^{\nu,\E}f = \sup_{t>0} \big| \mathbb{W}_{t}^{\nu,\E}f\big|.
$$
We aim at showing the weak type $(1,1)$ estimate for $\mathbb{W}_{*}^{\nu,\E}$.
This question makes sense only for $\nu$ satisfying 
$\mE(\nu) < 0$, in view of the above discussion concerning the pencil type phenomenon. 
It is worth pointing out that for such $\nu$ and $p>1$ the operator $\mathbb{W}_{*}^{\nu,\E}$ is
$L^p(d\eta_{\nu})$-bounded, by Stein's maximal theorem \cite[p.\,73]{St}.
On the other hand, when $\mE(\nu) > 0$ the maximal operator $\mathbb{W}_{*}^{\nu,\E}$
is not even defined on $L^1(d\eta_{\nu})$.

The following theorem is our main result in the Bessel setting.
\begin{thm} \label{thm:weakB}
Let $d \ge 1$ and $\nu \in A(\E)$ for some $\E \subset \{1,\ldots,d\}$. Assume that $\mE(\nu) < 0$. 
Then $\mathbb{W}_{*}^{\nu,\E}$ is bounded from $L^1(d\eta_{\nu})$ to weak $L^{1}(d\eta_{\nu})$.
\end{thm}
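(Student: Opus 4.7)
The plan is to parallel the argument from Section~\ref{sec:Lagexo} for the exotic Laguerre semigroup, which is available here in considerably simpler form because the Bessel kernels carry no exponential factor in the spatial variables. First I would make the standard reductions: by positivity of the kernel we may assume $f \ge 0$; the pointwise domination \eqref{denB} lets us restrict to $\nu_i \le -1$ for every $i \in \E$; by symmetry we may take $\E = \{1,\ldots,e\}$ and write $\nu = (\nu',\nu'') \in (-\infty,-1]^e \times (-1,\infty)^{d-e}$. The case $\E = \emptyset$ reduces to the weak type $(1,1)$ of the classical multidimensional Bessel semigroup maximal operator, which follows from the standard Gaussian upper bound together with the doubling property of $\eta_\nu$ for $\nu \in (-1,\infty)^d$.

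Next I would factor the kernel using \eqref{relBker} as
\[
\mathbb{W}_t^{\nu,\E}(x,y) = (x'y')^{-2\nu'} W_t^{-\nu'}(x',y') \, W_t^{\nu''}(x'',y''),
\]
bringing in only classical Bessel kernels with parameter $\check{\nu} = (-\nu', \nu'')$ satisfying $\check{\nu}_i \ge 1$ on the exotic axes. I would then split $\mathbb{W}_*^{\nu,\E}$ into a local piece, on which $x_i/2 \le y_i \le 2x_i$ for every $i \in \E$, and a global piece covering the complement. On the local piece $(x'y')^{-2\nu'} \simeq (y')^{-4\nu'}$, so that $(x'y')^{-2\nu'}\,d\eta_\nu(y) \simeq (x')^{-2\nu'}(y')^{2\nu'}\,d\eta_{\check\nu}(y)$. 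A dyadic decomposition of the exotic coordinate orthant together with a finite-overlap argument reduces the local estimate to the weak type $(1,1)$ of the classical Bessel maximal operator associated with $\check\nu$; this step is a direct analogue of Lemma~\ref{lem:p_loc}.

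For the global piece, I would start from the size estimate
\[
W_t^\beta(u,v) \simeq t^{-1/2}(uv+t)^{-\beta-1/2}\exp\Big(-\frac{(u-v)^2}{4t}\Big), \qquad u,v,t>0, \quad \beta>-1,
\]
which follows from \eqref{Basy*}, and apply Lemma~\ref{lem:Gauss2} with $\kappa = -1/2$, $\gamma = -\beta - 1/2$ to take the supremum in $t$. Multiplying by $(x_iy_i)^{-2\nu_i}$ on the exotic indices, this yields
\[
\sup_{t>0} \widetilde{W}_t^{\nu_i}(x_i,y_i) \lesssim (x_iy_i)^{-2\nu_i}|x_i-y_i|^{-1}(x_i \vee y_i)^{2\nu_i - 1}, \qquad y_i \ne x_i, \quad \nu_i \le -1,
\]
together with an analogous $t$-independent bound on the non-exotic indices. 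The global part is then dominated by a finite sum, indexed by the subsets of $\{1,\ldots,d\}$ flagging which coordinates violate $x_i/2 \le y_i \le 2x_i$, of integral operators with explicit, $t$-independent kernels. By Fubini each reduces to one-dimensional verifications such as
\[
\int_0^\infty \chi_{\{|x-y|\ge x/2\}}(xy)^{-2\nu}|x-y|^{-1}(x\vee y)^{2\nu-1}\,d\eta_\nu(y) \lesssim 1, \qquad x>0, \quad \nu \le -1,
\]
and analogous one-dimensional bounds on the non-exotic axes, all of which are elementary.

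The main obstacle is administrative rather than analytic: tracking the interplay between $\eta_\nu$ and $\eta_{\check\nu}$ after the kernel identity \eqref{relBker} is invoked, and enumerating the dyadic configurations in the global analysis. Notably, the absence of Gaussian decay at infinity in the Bessel setting removes the need for an analogue of the delicate ``outside the cube $[1,\infty)^d$'' argument of Lemma~\ref{lem:p_gg1}, so no genuinely new difficulty beyond the Laguerre case is expected to arise.
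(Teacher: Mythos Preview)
Your reductions and the treatment of the local piece are exactly what the paper does (its Lemma~\ref{lem:p_locB}), and the one-dimensional exotic-far estimate you display is essentially the paper's Lemma~\ref{lem:p_glob_0B}. The gap is in how you propose to assemble the global piece.

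You claim that the global region decomposes into pieces indexed by subsets $S\subset\{1,\dots,d\}$, each carrying a \emph{fully} $t$-independent product kernel that can be checked coordinate-by-coordinate via Fubini. This does not work. On any coordinate $i\notin S$ you are in the ``near'' region $x_i/2\le y_i\le 2x_i$, and there the pointwise supremum $\sup_{t>0}W_t^{\gamma}(x_i,y_i)\simeq |x_i-y_i|^{-1}(x_iy_i+|x_i-y_i|^2)^{-\gamma-1/2}$ has a non-integrable singularity at $y_i=x_i$; so no $t$-free product kernel is available. Even on a non-exotic coordinate flagged as ``far'' your promised one-dimensional verification fails: with $\gamma>-1$ one has $\sup_{t>0}W_t^{\gamma}(x,y)\simeq (x\vee y)^{-2\gamma-2}$ in that region, and
\[
\int_{2x}^{\infty}(x\vee y)^{-2\gamma-2}\,y^{2\gamma+1}\,dy=\int_{2x}^{\infty}y^{-1}\,dy=\infty,
\]
so the ``analogous one-dimensional bound on the non-exotic axes'' is simply false.

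The paper's combination is different and hinges on the $t$-independence of the exotic-far bound in just \emph{one} coordinate. If $j\in\E$ and $y_j$ is far, bound only the $j$th factor by $N_j(x_j,y_j):=\sup_{t>0}\widetilde{W}_t^{\nu_j}(x_j,y_j)\chi_{\{y_j\ \mathrm{far}\}}$ and leave the remaining $(d-1)$-dimensional semigroup kernel intact. Since $N_j$ does not depend on $t$, Fubini gives
\[
\sup_{t>0}\int N_j(x_j,y_j)\,\mathbb{W}_t^{\hat\nu_j,\hat\E_j}(\hat x_j,\hat y_j)\,f(y)\,d\eta_\nu(y)
=\big(I\otimes\mathbb{W}_*^{\hat\nu_j,\hat\E_j}\big)\big((N_j\otimes I)f\big)(x),
\]
and now the operators are composed in the \emph{right} order: $N_j\otimes I$ is bounded $L^1(d\eta_\nu)\to L^1(d\eta_\nu)$ by Lemma~\ref{lem:p_glob_0B}, and $I\otimes\mathbb{W}_*^{\hat\nu_j,\hat\E_j}$ is bounded $L^1(d\eta_\nu)\to L^{1,\infty}(d\eta_\nu)$ by Fubini together with the weak type of $\mathbb{W}_*^{\hat\nu_j,\hat\E_j}$ in dimension $d-1$ (induction on $|\E|$, base case $\E=\emptyset$ being the classical result). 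Summing over $j\in\E$ and adding the local piece finishes the proof. The point you were missing is that one must keep the non-peeled coordinates inside the $(d-1)$-dimensional maximal operator rather than trying to freeze $t$ on them as well.
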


In the special case $\E = \emptyset$, Theorem \ref{thm:weakB} says that the classical
multi-dimensional Bessel semigroup maximal operator is of weak type $(1,1)$. This result is already
well known, see for instance \cite[Theorem 1.1]{BCC}, \cite[Theorem 1.1]{BCDFR}, \cite[Theorem 2.1]{BCN}
or \cite[Theorem 2.1]{CaSz}. Nevertheless, we take this opportunity to present a short and direct
argument: since the kernel $\mathbb{W}_t^{\nu,\emptyset}(x,y)$ possesses the so-called
Gaussian bound related to the space of homogeneous type $(\R_+^d,\eta_{\nu},|\cdot|)$,
see \cite[Lemma 4.2]{DPW}, the weak type $(1,1)$ estimate for $\mathbb{W}_*^{\nu,\emptyset}$
follows from the general theory.
Actually, to verify the Gaussian bound it is enough to use \eqref{Basy*} and 
Lemma \ref{lem:L3} (with $\gamma = - \nu_i -1/2$, $C=1/8$, $a = \frac{x_i}{\sqrt{t}}$ and
$b = \frac{ y_i}{\sqrt{t}}$, $i=1, \ldots, d$) to get
\begin{align}\label{Bclcomp}
\mathbb{W}_{t}^{\nu,\emptyset} (x,y)
& \simeq 
t^{-d/2} \prod_{i=1}^d \big(x_i y_i + t \big)^{-\nu_i-1/2}
	\exp\bigg( -\frac{|x-y|^2}{4t}\bigg) \\ \nonumber
& \lesssim
t^{-d/2} \prod_{i=1}^d \big(x_i +\sqrt{t}\,\big)^{-2\nu_i-1}
	\exp\bigg( -\frac{|x-y|^2}{8t}\bigg), 
\qquad x,y \in \RR, \quad t > 0,
\end{align}
and then combine the last estimate with \eqref{Bball}.

When proving Theorem \ref{thm:weakB}, we can make analogous reductions to those described in items (R1)--(R5)
following the statement of Theorem \ref{thm:weak}. Indeed, instead of \eqref{den} and Theorem
\ref{thm:max_Lcl} we use \eqref{denB} and the weak type $(1,1)$ of $\mathbb{W}_*^{\nu,\emptyset}$,
respectively. Therefore we may assume that 
$\nu = (\nu',\nu'') \in (-\infty,-1]^{\e} \times (-1,\infty)^{d-\e}$
for some $1 \le \e \le d$, and that the kernel is given by
$$
\mathbb{W}_t^{\nu}(x,y) 
= (x'y')^{-2\nu'} \mathbb{W}_t^{-\nu',\emptyset}(x',y') \mathbb{W}_t^{\nu'',\emptyset}(x'',y''),
\qquad x,y \in \RR, \quad t > 0,
$$
where, as before, for $z \in \R^d$ we denote $z'=(z_1,\ldots,z_{\e})\in \R^{\e}$ and 
$z''=(z_{\e+1},\ldots,z_d)\in \R^{d-\e}$.
Now, having in mind the product structure of $\mathbb{W}_t^{\nu}(x,y)$, we see that the proof of
Theorem \ref{thm:weakB} boils down to showing the following two lemmas.

\begin{lem} \label{lem:p_locB}
For any $d \ge 1$, $1 \le \e \le d$ and each $\nu \in (-\infty,-1]^{\e} \times (-1,\infty)^{d-\e}$
the maximal operator
$$
\mathbb{W}^{\nu}_{*,1}f(x) = \sup_{t > 0} \int_{\R^d_+} 
	\chi_{\{x_i/2 \le y_i \le 2x_i\; \textrm{for}\; i=1,\ldots,\e\}} \mathbb{W}^{\nu}_t(x,y)
	 f(y)\, d\eta_{\nu}(y), \qquad f \ge 0,
$$
is of weak type $(1,1)$ with respect to $\eta_{\nu}$.
\end{lem}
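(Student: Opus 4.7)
The plan is to reduce the estimate to the classical multi-dimensional Bessel maximal operator $\mathbb{W}_*^{\check{\nu},\emptyset}$, where $\check{\nu} := (-\nu',\nu'') \in (-1,\infty)^d$ lies in the classical parameter range. First I would write
$$
\mathbb{W}_t^{\nu}(x,y) = (x'y')^{-2\nu'}\, \mathbb{W}_t^{\check{\nu},\emptyset}(x,y),
$$
and observe that on the local region $\{y' \sim x'\}$ (meaning $x_i/2 \le y_i \le 2x_i$ for $i=1,\ldots,\e$) the weight satisfies $(x'y')^{-2\nu'} \simeq (y')^{-4\nu'}$, with constants depending only on $\nu'$. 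Since $d\eta_{\nu}(y) = (y')^{4\nu'}\, d\eta_{\check{\nu}}(y)$, this yields the pointwise domination
$$
\mathbb{W}_{*,1}^{\nu} f(x) \lesssim \sup_{t>0} \int_{y' \sim x'} \mathbb{W}_t^{\check{\nu},\emptyset}(x,y)\, f(y)\, d\eta_{\check{\nu}}(y) \le \mathbb{W}_*^{\check{\nu},\emptyset}\bigl(\chi_{y' \sim x'} f\bigr)(x).
$$

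Next, I would dyadically decompose in the exotic coordinates exactly as in the proof of Lemma \ref{lem:p_loc}: for $n \in \Z^\e$ set
$$
S_n = \{x \in \RR : 2^{n_i} < x_i \le 2^{n_i+1},\ i=1,\ldots,\e\}, \qquad S_n^{\angle} = \{x \in \RR : 2^{n_i-1} < x_i \le 2^{n_i+2},\ i=1,\ldots,\e\}.
$$
Whenever $x \in S_n$ and $y$ lies in the local region over $x$, we have $y \in S_n^{\angle}$, and on both $S_n$ and $S_n^{\angle}$ the factor $(y')^{4\nu'}$ is comparable to $2^{4\langle \nu' n\rangle}$, so $\eta_{\nu}(A) \simeq 2^{4\langle \nu' n\rangle}\, \eta_{\check{\nu}}(A)$ for any measurable $A$ contained in either set.

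Finally, I would invoke the weak type $(1,1)$ of $\mathbb{W}_*^{\check{\nu},\emptyset}$ with respect to $\eta_{\check{\nu}}$ --- the classical Bessel result recalled earlier in this section via the Gaussian bound \eqref{Bclcomp} combined with \eqref{Bball} and standard homogeneous-space theory. Chaining these ingredients gives, with some constant $c > 0$ depending only on $\nu$,
$$
\eta_{\nu}\{\mathbb{W}_{*,1}^{\nu} f > \lambda\} \le \sum_{n \in \Z^\e} \eta_{\nu}\bigl\{x \in S_n : \mathbb{W}_*^{\check{\nu},\emptyset}(\chi_{S_n^{\angle}} f)(x) > c\lambda\bigr\} \lesssim \sum_{n \in \Z^\e} 2^{4\langle \nu' n\rangle}\, \frac{1}{\lambda} \int_{S_n^{\angle}} f\, d\eta_{\check{\nu}} \simeq \frac{1}{\lambda} \sum_{n \in \Z^\e} \int_{S_n^{\angle}} f\, d\eta_{\nu} \lesssim \frac{\|f\|_{L^1(d\eta_{\nu})}}{\lambda},
$$
the last comparability being the finite overlap of the $S_n^{\angle}$. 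There is no genuine obstacle here beyond carefully matching the exotic weight $(x'y')^{-2\nu'}$ with the change of measure from $\eta_{\nu}$ to $\eta_{\check{\nu}}$; once that bookkeeping is done, the localization $y' \sim x'$ converts the problem to the classical multi-dimensional Bessel setting and the argument runs parallel to the Laguerre local estimate.
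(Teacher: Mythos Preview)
Your proposal is correct and follows essentially the same approach as the paper: the paper simply notes that the proof is a repetition of the arguments for Lemma~\ref{lem:p_loc}, with the weak type $(1,1)$ of the classical Bessel semigroup maximal operator $\mathbb{W}_*^{\check{\nu},\emptyset}$ replacing Theorem~\ref{thm:max_Lclr}. Your detailed write-up---the dyadic decomposition in the exotic coordinates, the measure change $d\eta_{\nu} = (y')^{4\nu'}\,d\eta_{\check{\nu}}$, and the bounded overlap of the $S_n^{\angle}$---is exactly what that repetition amounts to.
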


\begin{proof}
The proof is just a repetition of the arguments used in the proof of Lemma \ref{lem:p_loc},
where instead of Theorem \ref{thm:max_Lclr} one should use the weak type $(1,1)$ of
the classical Bessel semigroup maximal operator.
\end{proof}

\begin{lem} \label{lem:p_glob_0B}
For each $\beta \le -1$ the one-dimensional operator
$$
N^{\beta}f(x) = \int_{0}^{\infty} \chi_{\{y < x/2 \;\textrm{or}\; y>2x\}} 
	\bigg[(xy)^{-2\beta} \sup_{t > 0} 
	 W_t^{-\beta}(x,y)\bigg] f(y)\, d\eta_{\b}(y), \qquad f \ge 0,
$$
is of strong type $(1,1)$ with respect to $\eta_{\beta}$.
\end{lem}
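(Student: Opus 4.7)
The plan is to reduce the sup-kernel in $N^\beta$ to an explicit power-type expression, and then verify the dual $L^1$ test, which for a positive integral kernel against the measure $d\eta_\beta$ boils down to checking that $\sup_{y>0}\int_0^\infty K(x,y)\, d\eta_\beta(x) < \infty$.

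First I would use the Bessel function asymptotics \eqref{Basy*} with $\nu=-\beta\ge 1$ to combine $\exp(-(x^2+y^2)/(4t))$ with the $e^{xy/(2t)}$ coming from $I_{-\beta}(xy/(2t))$, producing uniformly in $x,y,t>0$
$$
W_t^{-\beta}(x,y) \simeq t^{-1/2}\,(xy+t)^{\beta-1/2}\,\exp\!\Big(-\tfrac{(x-y)^2}{4t}\Big).
$$
The power $-1/2$ in $t$ arises after all the $(xy/(2t))^{-\beta}$ and $(xy)^\beta$ factors cancel; checking this is the only piece of the calculation where one must be careful. Next I would take the supremum via Lemma~\ref{lem:Gauss2} with $\kappa=-1/2$, $\gamma=\beta-1/2$, $A=xy$ and $z=|x-y|$; the hypothesis $\kappa+\gamma=\beta-1\le 0$ holds because $\beta\le -1$, giving
$$
\sup_{t>0} W_t^{-\beta}(x,y) \simeq |x-y|^{-1}\,\big((x-y)^2+xy\big)^{\beta-1/2}.
$$

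On the region $y<x/2$ or $y>2x$ one has $|x-y|\simeq x\vee y$, and since $xy\le (x\vee y)^2$ also $(x-y)^2+xy \simeq (x\vee y)^2$; using $\beta-1/2<0$ this collapses to $(x\vee y)^{2\beta-2}$. Multiplying by $(xy)^{-2\beta}=(x\wedge y)^{-2\beta}(x\vee y)^{-2\beta}$ yields the clean pointwise bound
$$
K(x,y) := \chi_{\{y<x/2\,\text{or}\,y>2x\}}\,(xy)^{-2\beta}\sup_{t>0} W_t^{-\beta}(x,y) \;\lesssim\; (x\wedge y)^{-2\beta}(x\vee y)^{-2},
$$
a symmetric expression analogous to the one obtained in the proof of Lemma~\ref{lem:p_glob_0}, but without the extra $e^{x^2}$ factor (which there came from the Gaussian change of variables, absent here).

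Finally, by Fubini the strong type $(1,1)$ estimate reduces to showing
$\sup_{y>0}\int_0^\infty K(x,y)\,x^{2\beta+1}\,dx < \infty$. I would split the region into $\{0<x<y/2\}$, where the integrand equals a constant times $x/y^2$ and integrates to a constant uniformly in $y$, and $\{x>2y\}$, where it equals $y^{-2\beta}x^{2\beta-1}$ and the tail at infinity converges because $2\beta-1\le -3<-1$, again producing a bound independent of $y$. No step is genuinely hard: the main subtlety is only making the Bessel asymptotics and Lemma~\ref{lem:Gauss2} mesh so that the powers in $t$ and $xy$ give exactly the decay needed both at $t\to 0^+$ and $t\to\infty$.
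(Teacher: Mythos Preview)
Your proposal is correct and follows essentially the same route as the paper: the Bessel asymptotics \eqref{Basy*} give the same $t^{-1/2}(xy+t)^{\beta-1/2}\exp(-(x-y)^2/(4t))$ comparability, Lemma~\ref{lem:Gauss2} eliminates the supremum in $t$, and the constraint $|x-y|\simeq x\vee y$ collapses everything to $(x\wedge y)^{-2\beta}(x\vee y)^{-2}$, after which the dual $L^1$ test is elementary. The only cosmetic difference is that the paper first replaces $(x-y)^2$ by $(x\vee y)^2$ in the exponential and then applies Lemma~\ref{lem:Gauss2} with $z=x\vee y$, whereas you apply the lemma with $z=|x-y|$ and simplify afterward; the outcome is identical.
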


\begin{proof}
Using \eqref{Basy*} we see that the kernel of $N^{\beta}$ is comparable with
\[
N (x,y) = \chi_{\{y < x/2 \;\textrm{or}\; y>2x\}}
(xy)^{-2\b} \sup_{t > 0} \,(xy + t)^{\b - 1/2} \frac{1}{\sqrt{t}} 
\exp\left( -\frac{|x-y|^2}{4t} \right).
\]
Further, the constraint $\{y < x/2$ or $y>2x\}$ implies $|y-x| > (x \vee y)/2$. Then, 
using Lemma~\ref{lem:Gauss2} (with $\kappa = -1/2$, $\gamma = \b - 1/2 \le 0$, $c=1/16$,
$A = xy$ and $z = x \vee y$) it is easy to check that 
\begin{align*}
N (x,y)
& \lesssim
(xy)^{-2\b} \sup_{t > 0}\, (xy + t)^{\b - 1/2} \frac{1}{\sqrt{t}} 
\exp\left( -\frac{(x \vee y)^2}{16t} \right) \\
& \simeq 
(xy)^{-2\b} (x \vee y)^{2\b-2}
= (x \wedge y)^{-2\b} (x \vee y)^{-2}.
\end{align*}
To conclude it suffices to verify that
\[
\int_0^{\infty} (x \wedge y)^{-2\b} (x \vee y)^{-2} \, d\eta_{\b}(x)
\lesssim
1, \qquad y > 0,
\]
which is trivial.
\end{proof}
Now Theorem \ref{thm:weakB} is proved.

\section{Exotic Jacobi semigroup maximal operator} \label{sec:Jac}

For the sake of clarity we first describe the one-dimensional situation. 
\subsection{Description of the classical and exotic Jacobi contexts in dimension one}
Recall from Section \ref{sec:intro} that the Jacobi differential operator is given by
$$
J_{\ab} = -(1-x^2) \frac{d^2}{dx^2} - \big[\beta-\alpha - (\alpha+\beta+2)x\big]\frac{d}{dx}.
$$
Here $\ab \in \R$ are the type parameters and we consider $J_{\ab}$ acting on functions on the
interval $(-1,1)$. A natural measure $\rho_{\ab}$ in $(-1,1)$ associated with $J_{\ab}$ has the form
$$
d\rho_{\ab}(x) = (1-x)^{\a}(1+x)^{\b}\, dx.
$$
From the factorization
$$
J_{\ab}f(x) = - \big[(1-x)^{\a}(1+x)^{\b}\big]^{-1} \frac{d}{dx} \Big( (1-x)^{\a+1}(1+x)^{\b+1}
	\frac{d}{dx}f(x)\Big)
$$
it is readily seen that $J_{\ab}$ is formally symmetric in $L^2(d\rho_{\ab})$.

When $\a > -1$ and $\b > -1$, there exists a classical self-adjoint extension of $J_{\ab}$
(acting initially on $C_c^{2}(-1,1)$), from now on denoted by $J_{\ab}^{\textrm{cls,cls}}$, whose
spectral decomposition is given by the Jacobi polynomials $P_n^{\ab}$, $n=0,1,2,\ldots$.
The latter form an orthogonal basis in $L^2(d\rho_{\ab})$ and one has
$J_{\ab}P_n^{\ab} = n(n+\a+\b+1)P_n^{\ab}$. Thus
$$
J_{\ab}^{\textrm{cls,cls}}f = \sum_{n=0}^{\infty} n(n+\a+\b+1) 
	\big\langle f, \breve{P}_n^{\ab}\big\rangle_{d\rho_{\ab}} \, \breve{P}_n^{\ab},
$$
on the domain $\dom J_{\ab}^{\textrm{cls,cls}}$ consisting of all $f\in L^2(d\rho_{\ab})$ for which
this series converges in $L^2(d\rho_{\ab})$;
here $\breve{P}_n^{\ab} = P_n^{\ab}/\|P_n^{\ab}\|_{L^2(d\rho_{\ab})}$ are the normalized Jacobi polynomials.

The classical Jacobi semigroup $T_t^{\ab} = \exp(-tJ_{\ab}^{\textrm{cls,cls}})$, $t \ge 0$,
has in $L^2(d\rho_{\ab})$ the integral representation
\begin{equation} \label{int_Jac}
T_t^{\ab}f(x) = \int_{-1}^1 G_t^{\ab}(x,y)f(y)\, d\rho_{\ab}(y), \qquad x \in (-1,1), \quad t>0,
\end{equation}
where
$$
G_t^{\ab}(x,y) = \sum_{n=0}^{\infty} e^{-tn(n+\a+\b+1)} \breve{P}_{n}^{\ab}(x) \breve{P}_n^{\ab}(y),
	\qquad x,y \in (-1,1), \quad t>0.
$$
Although no explicit formula is known for $G_t^{\ab}(x,y)$, the following qualitatively sharp estimates of
this kernel were established recently; see \cite[Theorem A]{NoSj} and \cite[Theorem 7.2]{CKP}.
\begin{thm}[{\cite{CKP, NoSj}}] \label{jkerest}
Let $\ab > -1$. There exist constants $c_1,c_2>0$ such that
\begin{align*}
& \big[t \vee \theta \varphi\big]^{-\a-1/2}
	\big[ t \vee (\pi-\theta)(\pi-\varphi)]^{-\b-1/2}
	\frac{1}{\sqrt{t}} \exp\bigg( - c_1\frac{(\theta-\varphi)^2}{t}\bigg)\\
& \lesssim
G_{t}^{\ab}(\cos\theta,\cos\varphi) \lesssim \big[t \vee \theta \varphi\big]^{-\a-1/2}
	\big[ t \vee (\pi-\theta)(\pi-\varphi)]^{-\b-1/2}
	\frac{1}{\sqrt{t}} \exp\bigg( - c_2\frac{(\theta-\varphi)^2}{t}\bigg),
\end{align*}
uniformly in $\theta,\varphi \in (0,\pi)$ and $0 < t \le 1$. Furthermore,
$$
G_t^{\ab}(x,y) \simeq 1, \qquad x,y \in (-1,1), \quad t \ge 1.
$$
\end{thm}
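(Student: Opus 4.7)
The plan is to split the proof into the two time regimes $t \ge 1$ and $0 < t \le 1$, which require completely different techniques.

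For $t \ge 1$, I would isolate the constant eigenfunction in the spectral expansion. Since $P_0^{\ab}\equiv 1$, we have
\begin{equation*}
G_t^{\ab}(x,y) = c_{\ab} + \sum_{n \ge 1}e^{-tn(n+\a+\b+1)}\breve P_n^{\ab}(x)\breve P_n^{\ab}(y),
\qquad c_{\ab} = \rho_{\ab}((-1,1))^{-1}>0.
\end{equation*}
Because $\a,\b > -1$, we have $n(n+\a+\b+1) \ge 2+\a+\b > 0$ for $n\ge 1$, so the spectral gap is positive. Classical pointwise bounds on normalized Jacobi polynomials (of polynomial growth in $n$, uniformly on compact subsets of $(-1,1)$ and with the standard boundary behavior) make the tail bounded in absolute value by $Ce^{-ct}$, so $G_t^{\ab}(x,y)\simeq c_{\ab} \simeq 1$ uniformly.

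For $0 < t \le 1$, I would first set up the geometric framework: view $((-1,1),\rho_{\ab},d)$ as a metric measure space of homogeneous type, where $d(\cos\theta,\cos\varphi) = |\theta - \varphi|$. A direct computation gives the ball-volume estimate
\begin{equation*}
\rho_{\ab}(B(\cos\theta,r)) \simeq r(r+\theta)^{2\a+1}(r + \pi-\theta)^{2\b+1}, \qquad 0 < r \le \pi,
\end{equation*}
which is doubling. Then I would verify the Davies--Gaffney (finite propagation speed) estimate for the Dirichlet form $\mathcal E(f,g) = \int_{-1}^1 f'(x)g'(x)(1-x^2)\,d\rho_{\ab}(x)$, whose intrinsic metric coincides with $d$. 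With these two ingredients in hand, the general heat kernel machinery on Dirichlet spaces (CKP) converts the on-diagonal bound $G_t^{\ab}(x,x) \lesssim \rho_{\ab}(B(x,\sqrt t))^{-1}$ into the off-diagonal Gaussian upper bound, and substituting the explicit volume formula produces the stated upper estimate with the factors $[t \vee \theta\varphi]^{-\a-1/2}$ and $[t \vee (\pi-\theta)(\pi-\varphi)]^{-\b-1/2}$. For the matching lower bound, I would combine the on-diagonal lower bound (from a parabolic Harnack inequality together with conservativity $T_t^{\ab}\mathds{1}=\mathds{1}$) with a standard chaining argument along a near-geodesic from $\cos\theta$ to $\cos\varphi$. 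An alternative route, used by NoSj, is the Dijksma--Koornwinder product formula, which writes the heat kernel as an explicit integral against a Bessel--Jacobi type measure and allows both bounds to be extracted by direct asymptotic analysis.

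The hard part is tracking the boundary behaviour: the measure $\rho_{\ab}$ is degenerate at $x = \pm 1$, and the bounds must display the correct factors $t\vee\theta\varphi$ and $t\vee(\pi-\theta)(\pi-\varphi)$ coming from the ball-volume formula. This forces a multi-region case analysis depending on whether $\theta$ and $\varphi$ are within $\sqrt t$ of $0$, within $\sqrt t$ of $\pi$, or in the bulk, and the chaining for the lower bound must respect these regimes. The product-formula approach has the advantage of producing both inequalities simultaneously from the same integral representation, at the cost of delicate uniform estimates on the kernel of that formula; the Dirichlet-space approach is more structural but requires a carefully tailored Harnack inequality near the endpoints.
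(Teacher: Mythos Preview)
The paper does not prove this theorem at all: it is quoted verbatim from the references \cite{CKP} and \cite{NoSj} and used as a black box throughout Section~\ref{sec:Jac}. So there is no ``paper's own proof'' to compare against; your proposal is really a sketch of how the cited works establish the result.

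With that caveat, your outline is accurate at the level of strategy. The large-time statement $G_t^{\ab}\simeq 1$ for $t\ge 1$ is indeed a spectral-gap argument exactly as you describe; the only point to be careful about is that the normalized Jacobi polynomials are \emph{not} uniformly bounded on $(-1,1)$, so one really does need the standard endpoint estimate $|\breve P_n^{\ab}(\cos\theta)|\lesssim (n^{-1}+\theta)^{-\a-1/2}(n^{-1}+\pi-\theta)^{-\b-1/2}$, which grows only polynomially in $n$ and is therefore killed by the exponential factor $e^{-tn(n+\a+\b+1)}$.

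For $0<t\le 1$ you correctly identify the two existing routes. The approach in \cite{CKP} is the Dirichlet-space machinery you describe (doubling plus Davies--Gaffney giving the Gaussian upper bound, and a parabolic Harnack/chaining argument for the lower bound). The approach in \cite{NoSj} is somewhat different from what you call the Dijksma--Koornwinder formula: it is based on an explicit integral representation of $G_t^{\ab}$ in terms of a one-dimensional $\theta$-type function, obtained via a product formula for Jacobi polynomials, and the sharp bounds are then read off by direct asymptotic analysis of that integral. Your remark that the product-formula route yields both inequalities from a single representation, while the Dirichlet-space route is more structural but needs a boundary-adapted Harnack step, is a fair summary of the trade-off between the two references.
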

From Theorem \ref{jkerest} we see that $G_t^{\ab}(x,y)$ is strictly positive, while smoothness in
$(x,y,t) \in (-1,1)^2\times \R_+$ can be deduced from the series representation
(see e.g.\ \cite[Section 2]{NoSj}). Moreover, \eqref{int_Jac} provides a pointwise definition of
$T_t^{\ab}f$ (the defining integral converges absolutely) for $f \in L^1(d\rho_{\ab})$ and thus for
any $f \in L^p(d\rho_{\ab}) \subset L^1(d\rho_{\ab})$, $1 \le p \le \infty$.
Since $P_0^{\ab}$ is constant, we have $T_t^{\ab}\mathds{1} = \mathds{1}$, and consequently
$\{T_t^{\ab}\}$ is a Markovian symmetric diffusion semigroup. In particular, $T_t^{\ab}$, $t>0$,
are contractions on each $L^p(d\rho_{\ab})$, $1 \le p \le \infty$.

Inspired by Hajmirzaahmad \cite{H1}, we now pass to exotic situations. Roughly speaking, they occur in three 
essentially different cases depending on whether only
one or both type parameters are exotic. More precisely, we distinguish the following exotic situations:
\begin{itemize}
\item
$0 \neq \a < 1$ and $\b > -1$ and only $\a$ is exotic,
\item
$\a > -1$ and $ 0 \neq \b < 1$ and only $\b$ is exotic,
\item
$0 \neq \a < 1$ and $0 \neq \b < 1$ and both $\a$ and $\b$ are exotic.
\end{itemize}
For the sake of clarity, we look at each case separately.

Assume that only $\a$ is exotic and so let $0 \neq \a < 1$ and $\b > -1$. Consider the system
$\{(1-x)^{-\a}P_n^{-\a,\b} : n \ge 0\}$. This is an orthogonal basis in $L^2(d\rho_{\ab})$ and we have
$$
J_{\ab} \big( (1-x)^{-\a} P_n^{-\a,\b} \big) = \big[ n(n-\a+\b+1) - \a (\b+1)\big] (1-x)^{-\a}P_n^{-\a,\b},
	\qquad n \ge 0.
$$
The last two facts follow from the analogous ones for the system of Jacobi polynomials in the non-exotic
case. Observe that $(1-x)^{-\a}P_n^{-\a,\b}/\|(1-x)^{-\a}P_n^{-\a,\b}\|_{L^2(d\rho_{\ab})}
= (1-x)^{-\a} \breve{P}_n^{-\a,\b}$. We define the emerging self-adjoint extension of $J_{\ab}$ by
$$
J_{\ab}^{\textrm{exo,cls}}f = \sum_{n=0}^{\infty} \big[ n(n-\a+\b+1) - \a (\b+1)\big]
	\big\langle f, (1-x)^{-\a}\breve{P}_n^{-\a,\b} \big\rangle_{d\rho_{\ab}} \, (1-x)^{-\a}\breve{P}_n^{-\a,\b}
$$
on the domain $\dom J_{\ab}^{\textrm{exo,cls}}$ consisting of all $f \in L^2(d\rho_{\ab})$ for which the
above series converges in $L^2(d\rho_{\ab})$. It is easy to check that $J_{\ab}^{\textrm{exo,cls}}$
is non-negative in the spectral sense if and only if $\a < 0$.

The exotic Jacobi semigroup
${\overset{\sim,\cdot}{T}}{}_{\! t}^{\a,\b} = \exp(-t J_{\ab}^{\textrm{exo,cls}})$
is given in $L^2(d\rho_{\ab})$ by
$$
{\overset{\sim,\cdot}{T}}{}_{\! t}^{\a,\b}f = \sum_{n=0}^{\infty}
	e^{-t[ n(n-\a+\b+1) - \a (\b+1)]}
	\big\langle f, (1-x)^{-\a}\breve{P}_n^{-\a,\b}
		\big\rangle_{d\rho_{\ab}} \, (1-x)^{-\a}\breve{P}_n^{-\a,\b}, \qquad t \ge 0.
$$
The corresponding integral representation is
\begin{equation} \label{jsint2}
{\overset{\sim,\cdot}{T}}{}_{\! t}^{\a,\b}f(x) = \int_{-1}^1 {\overset{\sim,\cdot}{G}}{}_t^{\a,\b}(x,y)
	f(y)\, d\rho_{\ab}(y), \qquad x \in (-1,1), \quad t >0,
\end{equation}
with the integral kernel
\begin{align} \nonumber
{\overset{\sim,\cdot}{G}}{}_t^{\a,\b}(x,y) & = 
	\sum_{n=0}^{\infty}
	e^{-t[ n(n-\a+\b+1) - \a (\b+1)]}
	(1-x)^{-\a}\breve{P}_n^{-\a,\b}(x) \, (1-y)^{-\a}\breve{P}_n^{-\a,\b}(y) \\
& = e^{t \a (\b+1)} \big[(1-x)(1-y)\big]^{-\a} G_t^{-\a,\b}(x,y), \qquad x,y \in (-1,1), \quad t>0.
\label{p5}
\end{align}
We see that ${\overset{\sim,\cdot}{G}}{}_t^{\a,\b}(x,y)$ is strictly positive and smooth in
$(x,y,t) \in (-1,1)^2\times \R_+$. Using \eqref{p5} and Theorem \ref{jkerest}, it is straightforward to
check that for $\a < 0$ the integral \eqref{jsint2} converges absolutely for every
$f \in L^p(d\rho_{\ab})$, $1 \le p \le \infty$, providing a pointwise definition of
${\overset{\sim,\cdot}{T}}{}_{\! t}^{\a,\b}$ on all these $L^p$ spaces.
Moreover, for such $\a$ the family of 
operators $\{{\overset{\sim,\cdot}{T}}{}_{\! t}^{\a,\b}: t>0\}$
satisfies on the $L^p$ spaces the semigroup property,
as can be seen from \eqref{p5} and the analogous property for $\{T_t^{-\a,\b}:t>0\}$. On the other hand,
the case $0 < \a < 1$ is more subtle due to a pencil phenomenon.
The operators ${\overset{\sim,\cdot}{T}}{}_{\! t}^{\a,\b}$ are bounded on $L^p$ only for
$\a+1 < p < (\a+1)/\a$. In particular, ${\overset{\sim,\cdot}{T}}{}_{\! t}^{\a,\b}\mathds{1}$
is for each $t>0$ an unbounded function.

The situation when only $\b$ is exotic is completely parallel. Let $\a > -1$ and $0 \neq \b < 1$.
The relevant orthogonal system is $\{(1+x)^{-\b}P_n^{\a,-\b} : n \ge 0\}$ and we have
$$
J_{\ab} \big( (1+x)^{-\b} P_n^{\a,-\b} \big) = \big[ n(n+\a-\b+1) - (\a+1)\b\big] (1+x)^{-\b}P_n^{\a,-\b},
	\qquad n \ge 0.
$$
We denote the emerging self-adjoint extension by $J_{\ab}^{\textrm{cls,exo}}$ and the corresponding
exotic Jacobi semigroup by $\{{\overset{\cdot,\sim}{T}}{}_{\! t}^{\a,\b}\}$. Again, there is an integral
representation
\begin{equation} \label{jsint3}
{\overset{\cdot,\sim}{T}}{}_{\! t}^{\a,\b}f(x) = \int_{-1}^1 {\overset{\cdot,\sim}{G}}{}_t^{\a,\b}(x,y)
	f(y)\, d\rho_{\ab}(y), \qquad x \in (-1,1), \quad t >0,
\end{equation}
where
\begin{equation} \label{p55}
{\overset{\cdot,\sim}{G}}{}_t^{\a,\b}(x,y) = 
  e^{t (\a+1)\b} \big[(1+x)(1+y)\big]^{-\b} G_t^{\a,-\b}(x,y), \qquad x,y \in (-1,1), \quad t>0.
\end{equation}
Obviously, this kernel has properties parallel to ${\overset{\sim,\cdot}{G}}{}_t^{\a,\b}(x,y)$.
If $\b < 0$, \eqref{jsint3} defines pointwise a semigroup of operators on each $L^p(d\rho_{\ab})$,
$1 \le p \le \infty$. In the opposite case, when $0 < \b < 1$, the pencil phenomenon occurs, the condition
$\b+1 < p < (\b+1)/\b$ comes into play and in particular we see that  
${\overset{\cdot,\sim}{T}}{}_{\! t}^{\a,\b}\mathds{1}$ is an unbounded function for each $t>0$.

Finally, we focus on the situation when both $\a$ and $\b$ are exotic. In a sense, this is
a combination of the two previous exotic situations.
Assume that $0 \neq \a < 1$ and $0 \neq \b < 1$.
Consider the system $\{(1-x)^{-\a}(1+x)^{-\b} P_n^{-\a,-\b} : n \ge 0\}$. It is straightforward to verify
that this is an orthogonal basis in $L^2(d\rho_{\ab})$ and one has
$$
J_{\ab}\big( (1-x)^{-\a}(1+x)^{-\b} P_n^{-\a,-\b} \big) = \big[n(n-\a-\b+1)-\a-\b\big]
	(1-x)^{-\a}(1+x)^{-\b} P_n^{-\a,-\b}, \qquad n \ge 0.
$$
Further, 
$$
\frac{(1-x)^{-\a}(1+x)^{-\b} P_n^{-\a,-\b}}{\|(1-x)^{-\a}(1+x)^{-\b} P_n^{-\a,-\b}\|_{L^2(d\rho_{\ab})}}
= (1-x)^{-\a}(1+x)^{-\b} \breve{P}_n^{-\a,-\b}, \qquad n \ge 0.
$$ 
The self-adjoint extension of $J_{\ab}$ arising naturally in this context is given by
\begin{align*}
J_{\ab}^{\textrm{exo,exo}}f & = \sum_{n=0}^{\infty} \Big( \big[n(n-\a-\b+1)-\a-\b\big]
	\big\langle f, (1-x)^{-\a}(1+x)^{-\b} \breve{P}_n^{-\a,-\b}\big\rangle_{d\rho_{\ab}} \\
& \qquad \qquad \times		(1-x)^{-\a}(1+x)^{-\b} \breve{P}_n^{-\a,-\b} \Big)
\end{align*}
on the domain $\dom J_{\ab}^{\textrm{exo,exo}}$ consisting of all $f \in L^2(d\rho_{\ab})$ for which
the above series converges in $L^2(d\rho_{\ab})$. Notice that $J_{\ab}^{\textrm{exo,exo}}$ is non-negative
in the spectral sense if and only if $\a+\b \le 0$.

The exotic Jacobi semigroup ${\overset{\sim,\sim}{T}}{}_{\! t}^{\a,\b} = 
\exp(-t J_{\ab}^{\textrm{exo,exo}})$,
$t \ge 0$, defined spectrally in $L^2(d\rho_{\ab})$, has the integral representation
\begin{equation} \label{jsint4}
{\overset{\sim,\sim}{T}}{}_{\! t}^{\a,\b}f(x) = \int_{-1}^1 {\overset{\sim,\sim}{G}}{}_t^{\a,\b}(x,y)
	f(y)\, d\rho_{\ab}(y), \qquad x \in (-1,1), \quad t>0,
\end{equation}
where
\begin{equation} \label{p6}
{\overset{\sim,\sim}{G}}{}_t^{\a,\b}(x,y)  = e^{t(\a+\b)} \big[(1-x)(1-y)\big]^{-\a}
	\big[(1+x)(1+y)\big]^{-\b} G_t^{-\a,-\b}(x,y). 
\end{equation}
The kernel ${\overset{\sim,\sim}{G}}{}_t^{\a,\b}(x,y)$ is strictly positive and smooth in
$(x,y,t) \in (-1,1)^2\times \R_+$. By \eqref{p6} and Theorem \ref{jkerest} it follows that when
$\a < 0$ and $\b < 0$ the integral \eqref{jsint4} converges absolutely for $f \in L^p(d\rho_{\ab})$,
$1 \le p \le \infty$, providing a pointwise definition of ${\overset{\sim,\sim}{T}}{}_{\! t}^{\a,\b}$
on the $L^p$ spaces. Moreover, for such $\ab$ the family
$\{{\overset{\sim,\sim}{T}}{}_{\! t}^{\a,\b}:t>0\}$
satisfies the semigroup property on all these spaces. On the other hand, the opposite case 
$\a \vee \b > 0$ is more subtle because of the pencil phenomenon. Then 
${\overset{\sim,\sim}{T}}{}_{\! t}^{\a,\b}$ is bounded on $L^p$ only for
$\a \vee \b +1 < p < (\a \vee \b +1)/(\a \vee \b)$. In particular, 
${\overset{\sim,\sim}{T}}{}_{\! t}^{\a,\b}\mathds{1} \notin L^{\infty}$ for each $t>0$.
In contrast with previously considered situations, the pencil phenomenon occurs here also for
some parameters $\ab$ for which the operator $J_{\ab}^{\textrm{exo,exo}}$ is spectrally non-negative.

It is worth mentioning that there exist descriptions of the self-adjoint operators
$J_{\ab}^{\textrm{cls,cls}}$, $J_{\ab}^{\textrm{exo,cls}}$, $J_{\ab}^{\textrm{cls,exo}}$,
$J_{\ab}^{\textrm{exo,exo}}$ as differential operators, as in the Laguerre case; 
see Section \ref{sec:intro} and \cite{H2}. For instance,
\begin{equation} \label{zz}
\dom J_{\ab}^{\textrm{cls,cls}} = \big\{ f\in \mathcal{D}_{\ab} : 
	\lim_{x \to 1^-} (1-x)^{\a+1}f'(x) = 0, \,\lim_{x \to -1^+} (1+x)^{\b+1}f'(x) = 0 \big\},
\end{equation}
where $\mathcal{D}_{\ab}$ denotes the subspace of those $f \in L^2(d\rho_{\ab})$ for which $J_{\ab}f$
exists in a weak sense and is in $L^2(d\rho_{\ab})$. Then the action of $J_{\ab}^{\textrm{cls,cls}}$
on its domain is given by the differential operator $J_{\ab}$. Moreover, the two boundary conditions in
\eqref{zz} are automatically satisfied when $\ab \ge 1$.
For further details we refer to Hajmirzaahmad \cite{H1} and Hajmirzaahmad and Krall \cite{HK}.

Finally, for further reference we prove the following result.
\begin{thm} \label{thm:jacdom}
The following bounds hold uniformly in $x,y \in (-1,1)$ and $t>0$.
\begin{itemize}
\item[(a)]
If $-1 < \a < 0$ and $\b > -1$, then
${\overset{\sim,\cdot}{G}}{}_t^{\a,\b}(x,y) \le G_t^{\ab}(x,y)$.
\item[(b)]
If $\a > -1$ and $-1 < \b <0$, then
${\overset{\cdot,\sim}{G}}{}_t^{\a,\b}(x,y) \le G_t^{\ab}(x,y)$.
\item[(c)]
If $-1 < \a < 0$ and $-1 < \b < 0$, then
${\overset{\sim,\sim}{G}}{}_t^{\a,\b}(x,y) \le G_t^{\ab}(x,y)$.
\end{itemize}
\end{thm}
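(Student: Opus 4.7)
The plan is to mirror the Laguerre proof of \eqref{den}. In the Laguerre case, identity \eqref{ecl} reduced the inequality to the Bessel-function monotonicity \eqref{iI}. Here the identities \eqref{p5}, \eqref{p55}, \eqref{p6} likewise express each exotic Jacobi kernel as an explicit boundary weight times a classical Jacobi kernel with shifted type parameters, but no closed formula for $G_t^{\ab}$ is available, so no analogue of \eqref{iI} can be invoked directly. I will instead make rigorous the probabilistic interpretation indicated in Section~\ref{sec:intro}: in each overlapping range the exotic self-adjoint extension arises from the classical one by imposing a Dirichlet (killing) boundary condition at the exotic endpoint(s), and on general grounds the transition density of a killed diffusion is pointwise dominated by that of the reflecting one.

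First I would reduce the three statements to the semigroup dominations
\[
{\overset{\sim,\cdot}{T}}{}_{\! t}^{\a,\b} f \le T_t^{\ab} f, \quad {\overset{\cdot,\sim}{T}}{}_{\! t}^{\a,\b} f \le T_t^{\ab} f, \quad {\overset{\sim,\sim}{T}}{}_{\! t}^{\a,\b} f \le T_t^{\ab} f,
\]
for all $f \ge 0$ in a dense class such as $C_c(-1,1)$; joint continuity of all the kernels on $(-1,1)^2\times(0,\infty)$, which follows from the series representations and Theorem~\ref{jkerest}, then turns these dominations into the required pointwise kernel inequalities. Next I would identify the four operators $J_{\ab}^{\textrm{cls,cls}}$, $J_{\ab}^{\textrm{exo,cls}}$, $J_{\ab}^{\textrm{cls,exo}}$, $J_{\ab}^{\textrm{exo,exo}}$ as distinct self-adjoint extensions of the minimal symmetric operator $J_{\ab}|_{C_c^{\infty}(-1,1)}$ in $L^2(d\rho_{\ab})$, using the boundary-form analysis of Hajmirzaahmad \cite{H1,HK}. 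In each overlapping range the exotic extension differs from $J_{\ab}^{\textrm{cls,cls}}$ only by requiring that $f$ vanish at the exotic endpoint(s): this vanishing is enforced by the factor $(1-x)^{-\a}$ or $(1+x)^{-\b}$ in \eqref{p5}, \eqref{p55}, \eqref{p6} with positive exponent. The associated Dirichlet forms then share the integrand $(1-x^2)|f'|^2(1-x)^\a(1+x)^\b$, and the exotic form domain is strictly contained in the classical one.

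With this setup, the desired domination follows from a standard Dirichlet-form monotonicity principle: when two symmetric Markovian forms are nested and agree on the smaller domain, the associated positivity-preserving semigroups satisfy $e^{-tA_{\textrm{exo}}} \le e^{-tA_{\textrm{cls}}}$ pointwise on non-negative functions (Ouhabaz's theorem). This handles cases (a)--(c) uniformly, parts (b) and (c) corresponding to the exotic endpoint being $x=-1$, respectively both endpoints.

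The main obstacle will lie in the identification step: translating the spectrally-defined exotic operators into self-adjoint realizations of $J_{\ab}$ with concretely-described boundary conditions, and verifying the form-domain inclusions, requires some care because the coefficients of $J_{\ab}$ degenerate at $x=\pm 1$ and the description of the domains extending \eqref{zz} is delicate in the overlapping parameter ranges. A more hands-on alternative that bypasses general Dirichlet-form theory is the parabolic minimum principle applied to $w(s,x) = T_s^{\ab} f(x) - {\overset{\sim,\cdot}{T}}{}_{\! s}^{\a,\b} f(x)$ for $f \ge 0$: this $w$ solves the Jacobi heat equation with zero initial data, is non-negative at $x=-1$ by the shared classical behavior there, and tends to a non-negative limit as $x\to 1^-$ because the exotic term vanishes thanks to the $(1-x)^{-\a}$ factor in \eqref{p5}; the minimum principle for the Jacobi heat equation with its singular coefficients then forces $w \ge 0$ throughout, giving (a). Cases (b) and (c) follow identically, with the roles of the endpoints interchanged or combined.
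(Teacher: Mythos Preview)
Your approach is genuinely different from the paper's. The paper does not set up Dirichlet forms or invoke Ouhabaz-type domination; instead it quotes a ready-made comparison principle from \cite[Theorem 3.5]{NoSj}, namely
\[
\big[(1-x)(1-y)\big]^{\epsilon/2}\big[(1+x)(1+y)\big]^{\delta/2}G_t^{\a+\epsilon,\b+\delta}(x,y)\le e^{\frac{\epsilon+\delta}{2}(\a+\b+1+\frac{\epsilon+\delta}{2})t}G_t^{\ab}(x,y)
\]
for $\ab>-1$, $\epsilon,\delta\ge 0$ with $\a\ge-\epsilon/2$, $\b\ge-\delta/2$ (the last two constraints being droppable when $\epsilon=0$ or $\delta=0$, respectively). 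Plugging $(\epsilon,\delta)=(-2\a,0)$, $(0,-2\b)$, $(-2\a,-2\b)$ into this inequality and combining with \eqref{p5}, \eqref{p55}, \eqref{p6} gives (a), (b), (c) in one line each, with the exponential prefactors matching exactly. So the paper's proof is a two-sentence citation, whereas your route requires building the form-domain descriptions from \cite{H1,HK}, checking the ideal property needed for Ouhabaz's criterion, and verifying sub-Markovianity for all four extensions---correct in principle, but an order of magnitude longer. What your approach buys is conceptual transparency (it explains \emph{why} the domination holds), while the paper's buys brevity by outsourcing the analytic work to \cite{NoSj}.

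One caution on your parabolic-minimum-principle alternative: the claim that $w$ ``is non-negative at $x=-1$ by the shared classical behavior there'' is not a valid boundary input. Both semigroups satisfy a Neumann-type condition at $x=-1$, so $w$ inherits that condition, but this gives no a~priori sign information on $w(-1^+,s)$. To make the argument work you would need a Hopf-type boundary lemma adapted to the degenerate operator $J_{\ab}$ at the Neumann endpoint, ruling out a negative interior/boundary minimum; this is doable but is precisely the delicate step you flagged, and it should not be glossed over.
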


\begin{proof}
The comparison principle \cite[Theorem 3.5]{NoSj} says that for $x,y \in (-1,1)$ and $t>0$
$$
\big[(1-x)(1-y)\big]^{\epsilon/2} \big[(1+x)(1+y)\big]^{\delta/2} {G}_t^{\a+\epsilon,\b+\delta}(x,y)
	\le e^{\frac{\epsilon+\delta}2 (\a+\b+1+\frac{\epsilon+\delta}2)t} {G}_t^{\a,\b}(x,y)
$$
provided that $\ab > -1$ and $\epsilon,\delta \ge 0$ are such that
$\a \ge -\epsilon/2$ and $\b \ge -\delta/2$. 
The proof of this result in \cite{NoSj} shows that one can delete the assumption $\a \ge -\epsilon/2$
if $\epsilon = 0$, and similarly for the assumption $\b \ge -\delta/2$ in case $\delta = 0$.
Taking \eqref{p5} into account and applying the comparison
principle with $\epsilon = -2\a$ and $\delta = 0$ we get (a). Item (b) is analogous. To show (c), we use
\eqref{p6} and take $\epsilon = -2\a$ and $\delta = -2\b$ in the comparison principle.
\end{proof}

The three inequalities of Theorem \ref{thm:jacdom} are presumably strict.
To give a heuristic justification of this,
we invoke the probabilistic interpretation and point out
that the kernels involved are transition probability densities for Jacobi processes that differ only by
the nature of the boundary points. In the context of Theorem \ref{thm:jacdom},
the boundary points are either reflecting
or killing (more precisely, for the boundary point $x=1$ reflection corresponds to non-exotic
values of $\a$ and killing to exotic values of $\a$; similarly for $x=-1$ and $\b$).

An important conclusion from Theorem \ref{thm:jacdom} is that,
for the indicated ranges of the type parameters, the exotic Jacobi semigroups
$\{{\overset{\sim,\cdot}{T}}{}_{\! t}^{\a,\b}\}$, $\{{\overset{\cdot,\sim}{T}}{}_{\! t}^{\a,\b}\}$
and $\{{\overset{\sim,\sim}{T}}{}_{\! t}^{\a,\b}\}$
are contractive
on $L^{\infty}$ and thus on all $L^p(d\rho_{\ab})$, $1 \le p \le \infty$. The same should be true for
wider ranges of $\ab$. Actually, we believe that the following stronger result holds.
\begin{conj} \label{con:a}
Let $x \in (-1,1)$ and $t > 0$.
\begin{itemize}
\item[(a)]
If $\a < 0$ and $\b > -1$, then ${\overset{\sim,\cdot}{T}}{}_{\! t}^{\a,\b}\mathds{1}(x) < 1$.
\item[(b)]
If $\a > -1$ and $\b < 0$, then ${\overset{\cdot,\sim}{T}}{}_{\! t}^{\a,\b}\mathds{1}(x) < 1$.
\item[(c)]
If $\a < 0$ and $\b < 0$, then ${\overset{\sim,\sim}{T}}{}_{\! t}^{\a,\b}\mathds{1}(x) < 1$.
\end{itemize}
\end{conj}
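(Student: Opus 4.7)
My plan is to convert each case of the conjecture into a strict pointwise inequality about a classical Jacobi semigroup applied to an explicit power function, and then attack that inequality by comparing the semigroup solution with a formal exponential barrier. Starting from case (a), inserting the kernel formula \eqref{p5} into the definition of ${\overset{\sim,\cdot}{T}}{}_{\!t}^{\a,\b}\mathds{1}$ and using $d\rho_{\a,\b}(y)=(1-y)^{2\a}\,d\rho_{-\a,\b}(y)$ yields
\[
{\overset{\sim,\cdot}{T}}{}_{\!t}^{\a,\b}\mathds{1}(x)=e^{t\a(\b+1)}(1-x)^{-\a}\,T_{t}^{-\a,\b}\bigl((1-\cdot)^{\a}\bigr)(x),
\]
and parallel manipulations via \eqref{p55}, \eqref{p6} handle (b), (c). Setting $h$ equal to $(1-x)^{\a}$, $(1+x)^{\b}$ or $(1-x)^{\a}(1+x)^{\b}$ in the respective cases, the conjecture becomes
\[
T_{t}^{\tilde\a,\tilde\b}h(x)<e^{\mu t}h(x),\qquad t>0,\ x\in(-1,1),
\]
with $(\tilde\a,\tilde\b;\mu)$ equal to $(-\a,\b;-\a(\b+1))$, $(\a,-\b;-\b(\a+1))$ and $(-\a,-\b;-(\a+\b))$ respectively. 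In every case $\mu>0$, the classical parameters $\tilde\a,\tilde\b$ lie above $-1$ so the right-hand semigroup is well defined, and $h\in L^{1}(d\rho_{\tilde\a,\tilde\b})$ regardless of how negative $\a$ or $\b$ may be.

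The key observation is a formal eigen-relation: a direct differentiation gives $J_{\tilde\a,\tilde\b}h=-\mu h$ on $(-1,1)$ in all three cases. Thus $v(x,t):=e^{\mu t}h(x)$ formally solves $\partial_{t}v+J_{\tilde\a,\tilde\b}v=0$ with the same initial datum $h$ as $u(x,t):=T_{t}^{\tilde\a,\tilde\b}h(x)$, so the two solutions must differ only through their boundary behavior. Since $\a<0$ (or $\b<0$) the barrier $v$ blows up at the exotic endpoint, while $u$ remains smooth in the interior and obeys the reflecting condition of $J_{\tilde\a,\tilde\b}^{\textrm{cls,cls}}$. This mismatch is the analytic incarnation of the killing/exit mechanism that distinguishes the exotic process from its classical counterpart, and is what should force $u<v$ strictly.

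To execute the comparison rigorously I would approximate $h$ by its bounded truncations $h_{\varepsilon}:=h\wedge\varepsilon^{-1}$ and exploit the stochastic representation $u(x,t)=\mathbb{E}_{x}[h(X_{t})]$, where $X_{t}$ is the classical $(\tilde\a,\tilde\b)$-Jacobi diffusion. By It\^o combined with the eigen-relation $Lh=\mu h$, the process $M_{s}:=e^{-\mu s}h(X_{s})$ is a non-negative local martingale, and stopping it at the first hitting time $\tau_{\varepsilon}$ of the boundary strip $\{h\ge\varepsilon^{-1}\}$ makes it bounded; optional stopping and a decomposition on $\{\tau_{\varepsilon}>t\}$ versus $\{\tau_{\varepsilon}\le t\}$ then give
\[
e^{-\mu t}T_{t}^{\tilde\a,\tilde\b}h(x)=h(x)-R_{\varepsilon}(x,t)+o(1),\qquad \varepsilon\to 0^{+},
\]
where $R_{\varepsilon}(x,t):=\varepsilon^{-1}\mathbb{E}_{x}[e^{-\mu\tau_{\varepsilon}}\mathds{1}_{\tau_{\varepsilon}\le t}]\ge 0$ records the expected boundary loss. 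Showing $\liminf_{\varepsilon\to 0^{+}}R_{\varepsilon}(x,t)>0$ for every interior $x$ and every $t>0$, using the sharp kernel estimates of Theorem~\ref{jkerest} to control the hitting distribution of the boundary strip, would deliver the strict inequality.

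The main obstacle is precisely this lower bound on the boundary-loss term, because when $\a\le -1$ (respectively $\b\le -1$) the generalized eigenfunction $h$ is not in $L^{2}$ of the associated measure and the quadratic variation of $M$ is non-integrable, so spectral machinery is unavailable and one must quantify directly the tail behavior of the Jacobi process near its singular boundary. In the narrower overlap range $-1<\a<0$ (and symmetrically for $\b$) this difficulty can be bypassed altogether by upgrading the kernel bound of Theorem~\ref{thm:jacdom}(a) to a strict inequality---accessible via a Hopf-type application of the strong maximum principle to the quotient of the two kernels, both of which satisfy the same forward heat equation---after which integration against $d\rho_{\a,\b}$ and the identity $T_{t}^{\a,\b}\mathds{1}=\mathds{1}$ close the argument. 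The genuinely delicate regime is $\a\le -1$, and I expect this is where the bulk of the analytic work will lie; parts (b) and (c) follow by strictly analogous analyses.
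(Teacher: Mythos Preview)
The statement you are attempting is explicitly a \emph{conjecture} in the paper; the authors write that they ``strongly believe'' the contractivity holds but ``at the moment we are not able to justify this strictly in full generality.'' The only rigorous result in this direction is Theorem~\ref{thm:jacdom}, which covers just the overlap ranges $-1<\a<0$ (resp.\ $-1<\b<0$) and yields only the non-strict kernel inequality; the paper itself remarks that even those bounds are merely ``presumably strict.'' There is therefore no proof in the paper against which to compare your proposal.

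Your reduction to $T_{t}^{\tilde\a,\tilde\b}h<e^{\mu t}h$ via \eqref{p5}--\eqref{p6} is correct, and the supermartingale step toward the non-strict inequality is sound: $e^{-\mu s}h(X_{s})$ is a non-negative local martingale (the boundary condition of $J_{\tilde\a,\tilde\b}^{\textrm{cls,cls}}$ at the non-exotic endpoint is satisfied by $h$, so no local-time correction arises there, and since $\tilde\a=-\a>0$ the diffusion never reaches the exotic endpoint), hence a supermartingale, giving $T_{t}^{\tilde\a,\tilde\b}h\le e^{\mu t}h$. What remains genuinely open in your write-up is the strict inequality: you pin the argument on a quantitative lower bound for the boundary-loss term $R_{\varepsilon}$, but do not supply one, and you correctly flag this as the main obstacle. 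As written this is a plan, not a proof. A route that sidesteps the hitting-time analysis altogether: once $T_{t}^{\tilde\a,\tilde\b}h\le e^{\mu t}h$ is in hand, integrate against $d\rho_{\tilde\a,\tilde\b}$ and use $T_{t}^{\tilde\a,\tilde\b}\mathds{1}=\mathds{1}$ (self-adjointness) to get $\int T_{t}^{\tilde\a,\tilde\b}h\,d\rho_{\tilde\a,\tilde\b}=\int h\,d\rho_{\tilde\a,\tilde\b}<e^{\mu t}\int h\,d\rho_{\tilde\a,\tilde\b}$, forcing strict inequality on a set of positive measure; then write $T_{t}=T_{t/2}\circ T_{t/2}$ and invoke the strict positivity of $G_{t/2}^{\tilde\a,\tilde\b}$ from Theorem~\ref{jkerest} to propagate strictness to every $x\in(-1,1)$.
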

We note that from Theorem \ref{jkerest} it follows that the bounds (a)--(c) of the above conjecture
hold with $1$ on their right-hand sides replaced by some constants depending only on $\a$ and $\b$.
This shows, in particular, that the semigroups involved are uniformly bounded on $L^{\infty}$ and
thus on all $L^p(d\rho_{\ab})$, $1 \le p \le \infty$.

\subsection{Multi-dimensional exotic Jacobi context and the maximal theorem}
We now pass to the multi-dimensional situation.
Let $d \ge 1$ and let $\ab \in \R^d$ be multi-parameters. Further, fix $\E,\F \subset \{1,\ldots,d\}$ and
assume that $\a \in A(\E)$ and $\b \in A(\F)$, see \eqref{def:ABC}. Define
\begin{align*}
\mathfrak{P}_n^{(\a,\E),(\b,\F)} & = \bigotimes_{i=1}^{d} 
\begin{cases}
(1-x_i)^{-\a_i}(1+x_i)^{-\b_i} \breve{P}_{n_i}^{-\a_i,-\b_i}, & \quad i \in \E \cap \F, \\
(1-x_i)^{-\a_i}\breve{P}_{n_i}^{-\a_i,\b_i}, & \quad i \in \E \setminus \F, \\
(1+x_i)^{-\b_i}\breve{P}_{n_i}^{\a_i,-\b_i}, & \quad i \in \F \setminus \E, \\
\breve{P}_{n_i}^{\a_i,\b_i}, & \quad i \notin \E \cup \F,
\end{cases}
\qquad n \in \N^d.
\end{align*}
Then the system $\{\mathfrak{P}_n^{(\a,\E),(\b,\F)} : n \in \N^d\}$ is an orthonormal basis
in $L^2(d\rho_{\ab})$, where now $\rho_{\ab}$ is the product measure in $(-1,1)^d$ given by
$$
d\rho_{\ab}(x) = (\mathds{1}-x)^{\a} (\mathds{1}+x)^{\b}\, dx.
$$
Furthermore, $\mathfrak{P}_n^{(\a,\E),(\b,\F)}$ are eigenfunctions of the Jacobi differential
operator $\mathbb{J}_{\ab} = \sum_{i=1}^d J_{\a_i,\b_i}$ (each component of the sum acting on the
corresponding coordinate variable) and one has $\mathbb{J}_{\ab} \mathfrak{P}_n^{(\a,\E),(\b,\F)}
= \lambda_n^{(\a,\E),(\b,\F)} \mathfrak{P}_n^{(\a,\E),(\b,\F)}$, $n \in \N^d$,
where
\begin{align*}
\lambda_n^{(\a,\E),(\b,\F)} & = 
\sum_{i \in \E \cap \F} \big[ n_i(n_i-\a_i-\b_i+1) -\a_i-\b_i\big] + 
	\sum_{i \in \E \setminus \F} \big[ n_i(n_i-\a_i+\b_i+1) -\a_i(\b_i+1)\big] \\
& \qquad + \sum_{i \in \F \setminus \E} \big[ n_i(n_i+\a_i-\b_i+1) -(\a_i+1)\b_i\big]
 + \sum_{i \in \E^c \cap \F^c} n_i(n_i+\a_i+\b_i+1).
\end{align*} 

We consider the self-adjoint extension of $\mathbb{J}_{\ab}$
(acting initially on $\spann\{\mathfrak{P}_n^{(\a,\E),(\b,\F)} : n \in \N^d\} \subset L^2(d\rho_{\ab})$)
defined by
$$
\mathbb{J}_{(\a,\E),(\b,\F)}f = \sum_{n \in \N^d} \lambda_n^{(\a,\E),(\b,\F)}
	\big\langle f, \mathfrak{P}_n^{(\a,\E),(\b,\F)} \big\rangle_{d\rho_{\ab}} \,
	\mathfrak{P}_n^{(\a,\E),(\b,\F)}
$$
on the domain $\dom \mathbb{J}_{(\a,\E),(\b,\F)}$ consisting of all $f \in L^2(d\rho_{\ab})$
for which the above series converges in $L^2(d\rho_{\ab})$. Notice that the classical multi-dimensional
Jacobi setting is naturally embedded in this general setup and it corresponds to
$\E=\F=\emptyset$. Otherwise, if $\E \cup \F \neq \emptyset$,
we are concerned with the exotic situation.

The semigroup $\mathbb{T}_t^{(\a,\E),(\b,\F)} = \exp(-t\mathbb{J}_{(\a,\E),(\b,\F)})$, $t \ge 0$,
defined spectrally in $L^2(d\rho_{\ab})$, has the integral representation
\begin{equation} \label{eJir}
\mathbb{T}_t^{(\a,\E),(\b,\F)}f(x) = \int_{(-1,1)^d} \mathbb{G}_t^{(\a,\E),(\b,\F)}(x,y)
	f(y)\, d\rho_{\ab}(y), \qquad x \in (-1,1)^d, \quad t>0,
\end{equation}
where, for $x,y \in (-1,1)^d$ and $t>0$,
\begin{align*}
& \mathbb{G}_t^{(\a,\E),(\b,\F)}(x,y) \\ &\quad   = 
	\prod_{i \in \E \cap \F} {\overset{\sim,\sim}{G}}{}_t^{\a_i,\b_i}(x_i,y_i)
	\prod_{i \in \E \setminus \F} {\overset{\sim,\cdot}{G}}{}_t^{\a_i,\b_i}(x_i,y_i) 
	\prod_{i \in \F \setminus \E} {\overset{\cdot,\sim}{G}}{}_t^{\a_i,\b_i}(x_i,y_i)
	\prod_{i \in \E^c \cap \F^c} G_t^{\a_i,\b_i}(x_i,y_i).
\end{align*}
Obviously, $\mathbb{G}_t^{(\a,\E),(\b,\F)}(x,y)$ is strictly positive, symmetric and smooth
in $(x,y,t) \in (-1,1)^{2d} \times \R_+$ (this and some other facts below are immediate
consequences of the analogous properties of the one-dimensional components).

When $\mE(\a) < 0$ and $\mF(\b) < 0$, the formula \eqref{eJir} provides a pointwise definition
of $\mathbb{T}_t^{(\a,\E),(\b,\F)}f$ on all $L^p(d\rho_{\ab})$ spaces, $1 \le p \le \infty$.
Moreover, $\{\mathbb{T}_t^{(\a,\E),(\b,\F)} : t>0\}$
is a uniformly bounded semigroup of operators on each $L^p(d\rho_{\ab})$, $1 \le p \le \infty$. We strongly 
believe that this is in fact a semigroup of contractions on each $L^p(d\rho_{\ab})$,
but at the moment we are not able to justify this strictly in full generality (that is, for all considered
$\a$ and $\b$), see the related discussion in the one-dimensional case and Conjecture \ref{con:a}. Thus
for general $\ab$ we conjecture that $\{\mathbb{T}_t^{(\a,\E),(\b,\F)}\}$ is a submarkovian
symmetric diffusion semigroup, which is Markovian if and only if $\E = \F = \emptyset$.

In case $\mE(\a) > 0$ or $\mF(\b) > 0$ a pencil phenomenon
occurs. Given $t > 0$, the operator $\mathbb{T}_t^{(\a,\E),(\b,\F)}$ is well defined in
$L^p(d\rho_{\ab})$ only if
\begin{equation} \label{rr}
p > 1 + \mE(\a) \vee \mF(\b);
\end{equation}
in particular, $\mathbb{T}_t^{(\a,\E),(\b,\F)}$ is not defined in $L^1(d\rho_{\ab})$.
Moreover, the requirement that $\mathbb{T}_t^{(\a,\E),(\b,\F)}$ maps $L^p(d\rho_{\ab})$ into 
$L^p(d\rho_{\ab})$ forces a dual restriction to \eqref{rr}. In particular, 
$\mathbb{T}_t^{(\a,\E),(\b,\F)}\mathds{1}$ is an unbounded function.

The principal objective of this section is to investigate the maximal operator
$$
\mathbb{T}_*^{(\a,\E),(\b,\F)}f = \sup_{t>0} \big| \mathbb{T}_t^{(\a,\E),(\b,\F)}f \big|.
$$
We assume that no pencil phenomenon occurs, which limits our considerations to
$\ab$ satisfying $\mE(\a) < 0$ and $\mF(\b) < 0$.
We will prove that the maximal operator satisfies the weak type
$(1,1)$ estimate. Since $\mathbb{T}_*^{(\a,\E),(\b,\F)}$ is bounded on $L^{\infty}$
(see the comment following Conjecture \ref{con:a}),
by interpolation this will also give $L^p(d\rho_{\ab})$ boundedness, $1 < p < \infty$.
The latter could also be concluded from Stein's maximal theorem \cite[p.\,73]{St}, but only
for those $\ab$ for which we know so far that the semigroup is $L^p$-contractive.

The main result of this section is the following.
\begin{thm} \label{thm:weakJd}
Let $d \ge 1$ and $(\ab) \in A(\E) \times A(\F)$ for some $\E,\F \subset \{1,\ldots,d\}$.
Assume that $\mE(\a) < 0$ and $\mF(\b) < 0$.
Then $\mathbb{T}_*^{(\a,\E),(\b,\F)}$ is bounded from $L^1(d\rho_{\ab})$ to weak $L^1(d\rho_{\ab})$.
\end{thm}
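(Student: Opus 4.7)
The proof I propose mirrors the scheme of Theorem~\ref{thm:weakB}. I begin with analogous reductions: assume $f\ge 0$; apply Theorem~\ref{thm:jacdom} to restrict attention to the deeply exotic regime $\alpha_i\le-1$ for $i\in\E$ and $\beta_i\le-1$ for $i\in\F$; and permute coordinates to group them according to their classification in $\E\cap\F$, $\E\setminus\F$, $\F\setminus\E$, or $\E^c\cap\F^c$. The fully classical case $\E=\F=\emptyset$ is disposed of separately: the sharp bounds of Theorem~\ref{jkerest} combined with Lemma~\ref{lem:L3} yield a Gaussian bound for $\mathbb{G}_t^{(\a,\emptyset),(\b,\emptyset)}$ on the space of homogeneous type $((-1,1)^d,\rho_{\ab},|\cdot|)$ in the regime $t\le 1$, while $\mathbb{G}_t^{(\a,\emptyset),(\b,\emptyset)}\simeq 1$ for $t\ge 1$; hence the classical multi-dimensional Jacobi semigroup maximal operator is of weak type $(1,1)$ by the general theory in spaces of homogeneous type, as in the Bessel argument preceding the statement of Theorem~\ref{thm:weakB}.

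The crux is to decompose $\mathbb{T}_*^{(\a,\E),(\b,\F)}$ into a local part and a global part, in parallel with Lemmas~\ref{lem:p_locB} and~\ref{lem:p_glob_0B}. I would declare the local region to be
\[
\bigl\{(x,y):\, (1-x_i)/2\le 1-y_i\le 2(1-x_i)\text{ for } i\in\E,\ (1+x_i)/2\le 1+y_i\le 2(1+x_i)\text{ for } i\in\F\bigr\},
\]
with global being its complement. In the local region, the singular factors from \eqref{p5}, \eqref{p55} and \eqref{p6} reduce to pure powers of $1\pm x_i$ and, combined with the comparability of $d\rho_{\ab}(y)$ to a power of $(1\pm x_i)$ times the dual measure $d\rho_{\widetilde\a,\widetilde\b}(y)$, where $\widetilde\alpha_i=-\alpha_i$ for $i\in\E$, $\widetilde\alpha_i=\alpha_i$ otherwise, and analogously for $\widetilde\beta$, convert the local operator, up to bounded multiplicative corrections, into the classical multi-dimensional Jacobi semigroup maximal operator for the parameters $(\widetilde\a,\widetilde\b)$. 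A dyadic decomposition of $(-1,1)^d$ along the level sets of $1-x_i$ for $i\in\E$ and $1+x_i$ for $i\in\F$, exactly as in the proof of Lemma~\ref{lem:p_loc}, then transfers the classical weak type $(1,1)$ to the weak type $(1,1)$ of the local part with respect to $\rho_{\ab}$.

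For the global part, I would prove strong type $(1,1)$ coordinate by coordinate, reducing to three one-dimensional estimates analogous to Lemma~\ref{lem:p_glob_0B}. Passing to the angular variables $x=\cos\theta$, $y=\cos\varphi$, these bounds follow from Theorem~\ref{jkerest}: for $t\le 1$ one eliminates the supremum via Lemma~\ref{lem:Gauss2}, using that in the far region $|\theta-\varphi|\gtrsim\theta\vee\varphi$ near the relevant endpoint; for $t\ge 1$ the Jacobi kernel is uniformly bounded and the required integrability follows from $\alpha\le-1$ or $\beta\le-1$. The product structure then assembles these one-dimensional bounds into the multi-dimensional global strong type $(1,1)$ estimate. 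The principal obstacle I foresee is the both exotic case ($i\in\E\cap\F$), where the kernel $[(1-x)(1-y)]^{-2\alpha}[(1+x)(1+y)]^{-2\beta}\sup_{t>0}G_t^{-\alpha,-\beta}(x,y)$ combines singular weights at both endpoints with two-sided polynomial prefactors; handling it demands a careful case analysis according to the relative positions of $x,y$ near each of the two endpoints in order to match each prefactor with the corresponding exotic weight and secure integrability against $d\rho_{\a,\b}$.
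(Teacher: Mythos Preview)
Your proposal is correct in outline and would succeed, but it takes a genuinely different route from the paper. The paper does \emph{not} reduce to the deeply exotic regime via Theorem~\ref{thm:jacdom}, nor does it redo a local/global split in the Jacobi setting. Instead, it passes to trigonometric coordinates $x=\cos\theta$, $y=\cos\varphi$, restricts to $0<t\le 1$ (since the kernel is bounded for $t\ge 1$), and then partitions $(0,\pi)^d\times(0,\pi)^d$ into ``on-diagonal'' and ``off-diagonal'' pieces according to whether $\theta$ and $\varphi$ lie in the same quadrant $Q(\W)$. On each quadrant, the Jacobi kernel is comparable to a Bessel kernel $\mathbb{W}_t^{\a,\E}$ (the factor involving the far endpoint being harmless), so the on-diagonal maximal operator is controlled directly by Theorem~\ref{thm:weakB}; the off-diagonal piece (where $|\theta_i-\varphi_i|\ge\pi/2$ for some $i$) is trivially of strong type $(1,1)$. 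This transfer to the Bessel result is shorter and avoids the case analysis you anticipate for the doubly exotic coordinates; it also sidesteps the reduction to $\alpha_i\le-1$, $\beta_i\le-1$ entirely.

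Your approach has the merit of being self-contained (it does not invoke the Bessel theorem), and the local part via dyadic decomposition and the classical Jacobi maximal operator is perfectly sound. One small gap to note: Theorem~\ref{thm:jacdom} as stated does not directly furnish the domination $\overset{\sim,\sim}{G}{}_t^{\alpha,\beta}\le\overset{\cdot,\sim}{G}{}_t^{\alpha,\beta}$ needed when $i\in\E\cap\F$ with $-1<\alpha_i<0$ but $\beta_i\le-1$ (and symmetrically); you would have to invoke the comparison principle from \cite{NoSj} directly, as in the proof of Theorem~\ref{thm:jacdom}, rather than the theorem itself. The global estimate for the doubly exotic coordinates is indeed doable along the lines you sketch, but the paper's reduction to Bessel neatly absorbs that difficulty into work already done.
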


Notice that in the special case $\E=\F=\emptyset$, Theorem \ref{thm:weakJd} says that the
classical (non-exotic) Jacobi semigroup maximal operator is of weak type $(1,1)$. This result is
already known, see \cite[Theorem 5.1]{NoSj}. Actually, it is a straightforward consequence of the
Gaussian upper bound for the classical $d$-dimensional Jacobi kernel $G_t^{\ab}(x,y)$
in the trigonometric parameterization $x_i = \cos\theta_i$, $y_i = \cos\varphi_i$, $i=1,\ldots,d$,
that in dimension one was obtained essentially in \cite[Theorem 7.2]{CKP}.
The Gaussian bound is readily seen
from Theorem \ref{jkerest} and the relation (see \cite[Lemma 4.2]{NoSj7})
$$
\eta_{\ab}\big( B(\theta,r) \big) \simeq 1 \wedge \Big( r^d \prod_{i=1}^d (\theta_i+r)^{2\a_i+1}
	(\pi-\theta_i+r)^{2\b_i+1} \Big), \qquad \theta \in (0,\pi)^d, \quad r > 0,
$$
where $\ab > -1$ and $\eta_{\ab}$ is a doubling measure defined in \eqref{etam} below.
We leave further details to interested readers.

When proving Theorem \ref{thm:weakJd}, we may restrict to $f \ge 0$, since the kernel
$\mathbb{G}_t^{(\a,\E),(\b,\F)}(x,y)$ is positive.
Moreover, using Theorem \ref{jkerest} we infer that 
$$
\mathbb{G}_t^{(\a,\E),(\b,\F)}(x,y) \lesssim \mathbb{G}_1^{(\a,\E),(\b,\F)}(x,y),
	\qquad x,y \in (-1,1)^d, \quad t \ge 1,
$$ 
and therefore we may assume that the supremum in the definition of $\mathbb{T}_*^{(\a,\E),(\b,\F)}$
is taken only over $0 < t \le 1$.
Furthermore, it is convenient to apply the trigonometric parameterization
$$
x = \cos\theta := (\cos\theta_1,\ldots,\cos\theta_d), \qquad
y = \cos\varphi := (\cos\varphi_1,\ldots,\cos\varphi_d), \qquad \theta,\varphi \in (0,\pi)^d.
$$ 
Instead of $\rho_{\ab}$, we use the equivalent measure
\begin{equation} \label{etam}
d\eta_{\ab}(\theta) = 
\prod_{i=1}^d \theta_i^{2\a_i+1}(\pi-\theta_i)^{2\b_i+1}\, d\theta, \qquad \t \in (0,\pi)^d.
\end{equation}
Changing the variables, we see that Theorem \ref{thm:weakJd} will follow once we show that
the weak type bound
\begin{align*}
& \eta_{\ab}\Big\{ \theta \in (0,\pi)^d : \sup_{0 < t \le 1} \int_{(0,\pi)^d}
	\mathbb{G}_t^{(\a,\E),(\b,\F)}(\cos\theta,\cos\varphi) f(\varphi)\, d\eta_{\ab}(\varphi) > \lambda
	\Big\} \\
& \qquad  \le \frac{C}{\lambda} \int_{(0,\pi)^d} f(\theta)\, d\eta_{\ab}(\theta)
\end{align*}
holds with a constant $C$ independent of $\lambda > 0$ and $f \ge 0$.

To proceed, we take into account Theorem \ref{jkerest} and the relations \eqref{p5}, \eqref{p55}, \eqref{p6}
and introduce several auxiliary one-dimensional kernels that will be used to control
$\mathbb{G}_t^{(\a,\E),(\b,\F)}(\cos\theta,\cos\varphi)$:
\begin{align*}
K_{t}^{\ab} (\t,\vp) & = 
\big[ \t \vp + t\big]^{-\a-1/2}
	\big[ (\pi-\theta)(\pi-\varphi) + t]^{-\b-1/2}
	\frac{1}{\sqrt{t}} \exp\bigg( - c_2\frac{(\theta-\varphi)^2}{t}\bigg),
\qquad \!\!\!\ab > -1, \\
{\overset{\sim,\cdot}{K}}{}_{t}^{\ab} (\t,\vp) & = (\t \vp)^{-2\a} K_{t}^{-\a,\b} (\t,\vp),
\qquad \a < 0, \quad \b > -1, \\
{\overset{\cdot,\sim}{K}}{}_{t}^{\ab} (\t,\vp) & 
= \big[ (\pi-\theta)(\pi-\varphi)]^{-2\b} K_{t}^{\a,-\b} (\t,\vp), \qquad \a > -1, \quad \b < 0, \\
{\overset{\sim,\sim}{K}}{}_{t}^{\ab} (\t,\vp) & 
= (\t \vp)^{-2\a} \big[ (\pi-\theta)(\pi-\varphi)]^{-2\b} K_{t}^{-\a,-\b} (\t,\vp), \qquad \ab < 0;
\end{align*}
here $\theta,\varphi \in (0,\pi)$, $0 < t \le 1$
and $c_2>0$ is the constant from Theorem \ref{jkerest}.
Given $(\ab) \in A(\E) \times A(\F)$ such that $\mE(\a), \mF(\b) < 0$, we now define the
multi-dimensional kernel
\begin{align*}
& \mathbb{K}_{t}^{(\a,\E),(\b,\F)} (\t,\vp) \\
& \quad = \prod_{i \in \E \cap \F} {\overset{\sim,\sim}{K}}{}_{t}^{\a_i,\b_i}(\t_i,\vp_i) 
	\prod_{i \in \E \setminus \F} {\overset{\sim,\cdot}{K}}{}_{t}^{\a_i,\b_i}(\t_i,\vp_i)
\prod_{i \in \F \setminus \E} {\overset{\cdot,\sim}{K}}{}_{t}^{\a_i,\b_i}(\t_i,\vp_i)
\prod_{i \in \E^c \cap \F^c} K_{t}^{\a_i,\b_i}(\t_i,\vp_i),
\end{align*}
where $\theta,\varphi \in (0,\pi)^d$ and $0 < t \le 1$, and introduce the associated maximal operator
$$
\mathbb{K}_{*}^{(\a,\E),(\b,\F)} f(\t) = \sup_{0<t \le 1} \int_{(0,\pi)^d} 
\mathbb{K}_{t}^{(\a,\E),(\b,\F)} (\t,\vp)  f(\vp)\, d\eta_{\ab}(\vp),
\qquad \t \in (0,\pi)^d 
$$
acting on, say, $0 \le f \in L^1(d\eta_{\ab})$. In view of Theorem \ref{jkerest},
\[
\mathbb{G}_t^{(\a,\E),(\b,\F)} (\cos\theta,\cos\varphi)
\lesssim
\mathbb{K}_{t}^{(\a,\E),(\b,\F)} (\t,\vp),
\qquad \t,\vp \in (0,\pi)^d, \quad 0< t \le 1.
\]
Therefore, proving Theorem \ref{thm:weakJd} reduces now to showing the following result.

\begin{thm} \label{thm:weakJr}
Let $d \ge 1$ and $(\ab) \in A(\E) \times A(\F)$ for some $\E,\F \subset \{1,\ldots,d\}$.
Assume that $\mE(\a), \mF(\b) < 0$. Then the bound
$$
\eta_{\ab} \big\{\t \in (0,\pi)^d: \mathbb{K}_{*}^{(\a,\E),(\b,\F)} f(\t)> \lambda \big\} 
	\le \frac{C}{\lambda} \int_{(0,\pi)^d}f(\t)\, d\eta_{\ab}(\t)
$$
holds with a constant $C$ independent of $\lambda > 0$ and $f \ge 0$.
\end{thm}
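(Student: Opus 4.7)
The strategy closely parallels the proofs of Theorems \ref{thm:weakr} and \ref{thm:weakB}, exploiting both the product structure of $\mathbb{K}_t^{(\a,\E),(\b,\F)}$ and the three identities
\[
{\overset{\sim,\cdot}{K}}{}_t^{\ab} = (\t\vp)^{-2\a}K_t^{-\a,\b}, \qquad
{\overset{\cdot,\sim}{K}}{}_t^{\ab} = [(\pi-\t)(\pi-\vp)]^{-2\b}K_t^{\a,-\b}, \qquad
{\overset{\sim,\sim}{K}}{}_t^{\ab} = (\t\vp)^{-2\a}[(\pi-\t)(\pi-\vp)]^{-2\b}K_t^{-\a,-\b},
\]
which express the exotic kernels in terms of classical Jacobi kernels with sign-flipped type parameters. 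After reducing to $f\ge 0$, I would split the maximal operator according to the local/global behavior of each exotic coordinate: say that $\vp_i$ is \emph{local at $0$} (for $i\in\E$) when $\t_i/2\le\vp_i\le 2\t_i$, and \emph{local at $\pi$} (for $i\in\F$) when $(\pi-\t_i)/2\le\pi-\vp_i\le 2(\pi-\t_i)$. This produces at most $2^{|\E|+|\F|}$ pieces; in the fully local piece all exotic coordinates are local at the respective endpoints, and the others are \emph{global}.

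For the fully local piece, each power factor $(\t_i\vp_i)^{-2\a_i}$ (respectively $[(\pi-\t_i)(\pi-\vp_i)]^{-2\b_i}$) is comparable to $\t_i^{-4\a_i}$ (respectively $(\pi-\t_i)^{-4\b_i}$), so absorbing these into $d\eta_{\a,\b}(\vp)$ converts the measure into a $\t$-dependent multiple of $d\eta_{\check{\a},\check{\b}}(\vp)$, where $\check{\a}_i=-\a_i>-1$ for $i\in\E$ and $\check{\a}_i=\a_i$ otherwise, and analogously for $\check{\b}$. The kernel itself reduces to a product of classical Jacobi kernels $K_t^{\check{\a}_i,\check{\b}_i}$, whose maximal operator is of weak type $(1,1)$ with respect to $\eta_{\check{\a},\check{\b}}$ by the classical Jacobi result cited after Theorem \ref{thm:weakJd}. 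A dyadic decomposition of the $\t$-region in the exotic coordinates (mirroring the proof of Lemma \ref{lem:p_loc}), combined with finite overlap of slightly enlarged sectors, then transfers this weak type estimate to $\eta_{\a,\b}$.

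For each global piece, at least one exotic coordinate $i_0$ is non-local at the relevant endpoint, forcing $|\t_{i_0}-\vp_{i_0}|\gtrsim \t_{i_0}\vee\vp_{i_0}$ (or an analogous bound involving $\pi-\t_{i_0},\pi-\vp_{i_0}$). Applying Lemma \ref{lem:Gauss2} to the Gaussian-times-polynomial factor in $K_t^{\pm\a_{i_0},\pm\b_{i_0}}$ yields a $t$-independent one-dimensional bound of the form
\[
\sup_{0<t\le 1}{\overset{\sim,\cdot}{K}}{}_t^{\a_{i_0},\b_{i_0}}(\t_{i_0},\vp_{i_0})\lesssim
(\t_{i_0}\wedge\vp_{i_0})^{-2\a_{i_0}}(\t_{i_0}\vee\vp_{i_0})^{-2},
\]
together with analogous bounds for the other exotic types at the $\pi$-endpoint. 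A direct one-dimensional computation along the lines of Lemmas \ref{lem:p_glob_0} and \ref{lem:p_glob_0B} shows that each such kernel is of strong type $(1,1)$ against $d\eta_{\a_{i_0},\b_{i_0}}$ on $(0,\pi)$, because the measure is bounded there and the singular power $(\t_{i_0}\wedge\vp_{i_0})^{-2\a_{i_0}}$ pairs correctly with $d\eta_{\a_{i_0},\b_{i_0}}$. The remaining $d-1$ coordinates are handled as in the local part (absorption into the measure for local exotic coordinates) or by the classical Jacobi maximal theorem; iterated Fubini, combining strong type $(1,1)$ in the non-local coordinate with weak-type $(1,1)$ classical maximal estimates in the others, then yields weak type $(1,1)$ for each global piece.

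The main obstacle is bookkeeping the finitely many sub-cases, particularly for indices $i\in\E\cap\F$ where both endpoints are independently local or non-local, producing four combinations per such index. However, because Theorem \ref{jkerest} provides a single Gaussian factor $\exp(-c_2(\t_i-\vp_i)^2/t)$ which dominates whichever endpoint is globally separated, and the remaining polynomial prefactor in the classical $K_t^{\pm\a_i,\pm\b_i}$ is locally comparable to the corresponding power of $\t_i$ or $\pi-\t_i$, the one-dimensional analysis in each sub-case reduces to a single application of Lemma \ref{lem:Gauss2}. The combinatorial assembly across coordinates then proceeds mechanically, and summing the finitely many pieces concludes the proof of Theorem \ref{thm:weakJr}, hence also of Theorem \ref{thm:weakJd}.
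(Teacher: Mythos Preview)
Your approach is essentially correct, but it takes a considerably more laborious route than the paper. The paper does \emph{not} redo the local/global analysis in the Jacobi context; instead it reduces directly to the exotic Bessel result (Theorem~\ref{thm:weakB}). The key observation is that on each ``half-cube'' $Q(\W)$ (specifically, when $\theta_i,\varphi_i\in(0,3\pi/4)$ in the unreflected coordinates) the one-dimensional Jacobi kernel satisfies $K_t^{\gamma,\delta}(\theta,\varphi)\simeq W_{t/(4c_2)}^{\gamma}(\theta,\varphi)$, since the factor $[(\pi-\theta)(\pi-\varphi)+t]^{-\delta-1/2}$ is then uniformly comparable to a constant. Thus the exotic Jacobi maximal operator restricted to $\theta,\varphi\in Q(\W)$ is controlled by the exotic Bessel maximal operator, and the weak type $(1,1)$ follows from Theorem~\ref{thm:weakB} together with a reflection argument for general $\W$ (Lemma~\ref{lem:onJac}). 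The remaining off-diagonal pieces, where $|\theta_i-\varphi_i|\ge\pi/2$ for some $i$, are dispatched by a trivial one-dimensional strong type $(1,1)$ bound (Lemma~\ref{lem:offJac}), since the Gaussian factor then forces $t\simeq 1$ and the kernel is uniformly bounded by $\mathbb{K}_1^{(\a,\E),(\b,\F)}$.

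Your direct approach via local/global splitting in each exotic coordinate does work, by the same inductive product argument underlying Lemmas~\ref{lem:p_loc}--\ref{lem:p_glob_0B}, but the bookkeeping is heavier because each one-dimensional Jacobi kernel carries \emph{two} polynomial factors $[\theta\varphi+t]^{-\a-1/2}$ and $[(\pi-\theta)(\pi-\varphi)+t]^{-\b-1/2}$, so Lemma~\ref{lem:Gauss2} does not apply in one shot and the $\beta$-factor must be tracked separately in your global estimate. (Incidentally, your remark that ``the measure is bounded there'' is not correct when $\a_{i_0}\le -1$, since $\eta_{\a_{i_0},\b_{i_0}}$ is then infinite near $0$; what actually makes the strong type $(1,1)$ computation go through is the cancellation $(\theta\wedge\varphi)^{-2\a_{i_0}}\cdot\theta^{2\a_{i_0}+1}=\theta$ for $\theta<\varphi$.) The paper's reduction to Bessel sidesteps all of this by absorbing the second polynomial factor into a constant on each $Q(\W)$ and recycling the work already done in Section~\ref{sec:Bes}.
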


The proof of Theorem \ref{thm:weakJr} boils down in a straightforward manner to the two lemmas below.
To state them we need some additional notation. 
For a given $\W \subset \{1,\ldots,d\}$ denote
\[
Q(\W) =  
\prod_{i=1}^d
\begin{cases}	
	(\pi/4,\pi), & \quad i \in \W, \\
	(0,3\pi/4), & \quad i \notin \W.
\end{cases}
\]

\begin{lem}\label{lem:onJac}
Let $d \ge 1$, $\E,\F,\W \subset \{1,\ldots,d\}$ and $(\ab) \in A(\E) \times A(\F)$.
Assume that $\mE(\a), \mF(\b) < 0$.
Then the maximal operator
$$
\mathbb{L}_{*,\W}^{(\a,\E),(\b,\F)}f(\t) = \sup_{0 < t \le 1} \int_{(0,\pi)^d} 
	\chi_{\{\t,\vp \in Q(\W)\}} 
	 \mathbb{K}_{t}^{(\a,\E),(\b,\F)} (\t,\vp)  f(\vp)\, d\eta_{\ab}(\vp),
 \qquad f \ge 0,
$$
is of weak type $(1,1)$ with respect to $\eta_{\ab}$.
\end{lem}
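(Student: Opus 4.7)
The plan is to reduce Lemma \ref{lem:onJac} to the multi-dimensional exotic Laguerre maximal theorem (Theorem \ref{thm:weakr}) via a coordinate reflection on the $\W$-directions, followed by a pointwise kernel domination on the resulting bounded box.

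First, Theorem \ref{thm:jacdom} lets me absorb every exotic parameter in $(-1,0)$ into its classical counterpart by pointwise majorization, so it suffices to treat $\mE(\a),\mF(\b)\le -1$. Next, for each $i\in\W$ I apply the reflection $\theta_i\mapsto\pi-\theta_i$, $\varphi_i\mapsto\pi-\varphi_i$; this is a measure-preserving bijection from $Q(\W)\times Q(\W)$ onto $(0,3\pi/4)^d\times(0,3\pi/4)^d$. Setting $\gamma_i:=\beta_i$ for $i\in\W$ and $\gamma_i:=\alpha_i$ otherwise, and $\E':=(\F\cap\W)\cup(\E\setminus\W)$, the transformed measure $d\widetilde{\eta}$ satisfies $d\widetilde{\eta}(\theta)\simeq\prod_i\theta_i^{2\gamma_i+1}\,d\theta$ on the image, because every surviving $(\pi-\theta_i)$-factor stays in a fixed compact subinterval of $(0,\pi)$. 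Moreover $\gamma\in A(\E')$ and $m_{\E'}(\gamma)\le -1$.

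Using Theorem \ref{jkerest} together with \eqref{p5}, \eqref{p55}, \eqref{p6}, and absorbing the bounded $\pi-\theta_i$ and $\pi-\varphi_i$ contributions into constants, each one-dimensional factor in $\mathbb{K}_t^{(\a,\E),(\b,\F)}$ reduces, after the reflection, to either
\[
[\theta_i\varphi_i+t]^{-\gamma_i-1/2}\,t^{-1/2}\exp\Big(-c\frac{(\theta_i-\varphi_i)^2}{t}\Big)\qquad (i\notin\E')
\]
or
\[
(\theta_i\varphi_i)^{-2\gamma_i}[\theta_i\varphi_i+t]^{\gamma_i-1/2}\,t^{-1/2}\exp\Big(-c\frac{(\theta_i-\varphi_i)^2}{t}\Big)\qquad (i\in\E'),
\]
uniformly in $0<t\le 1$ and $\theta,\varphi\in(0,3\pi/4)^d$. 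Splitting the Gaussian in half and applying Lemma \ref{lem:L3} (with $a=\theta_i/\sqrt{t}$, $b=\varphi_i/\sqrt{t}$) pairs one half with the bracketed polynomial to yield the classical or exotic one-dimensional Laguerre factor with $s=t$. Because $|(1+s)x-(1-s)y|^2/(8s)\le (x-y)^2/(4s)+O(1)$ for $x,y$ bounded and $s\le 1$, the remaining Gaussian half dominates the Laguerre Gaussian, and the bounded prefactor $e^{|x|^2-|y|^2}$ is harmless on the box. Thus the reduced product kernel is majorized pointwise, uniformly in $0<t\le 1$, by a constant multiple of $\mathbb{K}_t^\gamma(\theta,\varphi)$. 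Since $d\nu_\gamma\simeq d\widetilde{\eta}$ on the bounded box ($e^{-|x|^2}\simeq 1$ there), Theorem \ref{thm:weakr} delivers the desired weak type $(1,1)$ bound for $\mathbb{L}_{*,\W}^{(\a,\E),(\b,\F)}$ against $\eta_{\ab}$.

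The main technical obstacle is the case-by-case bookkeeping required in the reduction step: each of the eight cases indexed by whether $i\in\E$, $i\in\F$, $i\in\W$ must be checked to produce one of the two displayed forms, which amounts to verifying uniformly in $t\in(0,1]$ that every $\pi-\theta_i$-factor arising from Theorem \ref{jkerest} and \eqref{p5}--\eqref{p6} stays bounded above and below on the image. Once this is in place the Laguerre comparison is essentially Lemma \ref{lem:L3} together with an elementary Gaussian estimate, and any mismatch in Gaussian coefficients is absorbed into the comparison constant.
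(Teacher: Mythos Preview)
Your overall architecture---reflect the $\W$-coordinates to land in the box $(0,3\pi/4)^d$, then compare the surviving kernel to a known one and invoke an earlier weak type theorem---is exactly the paper's. The difference is in the target of the comparison: the paper compares to the \emph{Bessel} kernel and invokes Theorem~\ref{thm:weakB}, whereas you aim at the \emph{Laguerre} kernel $\mathbb{K}_s^{\gamma}$ and Theorem~\ref{thm:weakr}. The Bessel route is shorter because on the box each one-dimensional Jacobi factor already has the exact Bessel shape $t^{-1/2}[\theta\varphi+t]^{-\gamma-1/2}\exp(-c(\theta-\varphi)^2/t)$; a single time rescaling $t\mapsto t/(4c_2)$ matches it to $W_t^{\gamma}$ via \eqref{Bclcomp}, and the exotic Jacobi factor matches $\widetilde{W}_t^{\gamma}$ through \eqref{relBker}. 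No splitting of Gaussians, no Lemma~\ref{lem:L3}, and no preliminary absorption of parameters in $(-1,0)$ is needed, since Theorem~\ref{thm:weakB} already covers all $\nu\in A(\E)$ with $m_{\E}(\nu)<0$.

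Your Laguerre detour can be made to work, but there are two loose ends you should tighten. First, Theorem~\ref{thm:jacdom} concerns the genuine Jacobi kernels $G_t^{\ab}$, not the auxiliary $K_t$-kernels appearing in $\mathbb{K}_t^{(\a,\E),(\b,\F)}$; the reduction you want at the $K_t$ level is the elementary inequality $(\theta\varphi)^{-2\alpha}\le[\theta\varphi+t]^{-2\alpha}$ for $-1<\alpha<0$, so quote that directly. Second, the Laguerre auxiliary kernel $\mathbb{K}_s^{\gamma}$ carries the factor $(1-s)^{-2\langle\gamma'\rangle}$, which tends to $0$ as $s\to 1^{-}$ (since $-2\langle\gamma'\rangle>0$), while your Jacobi side does not decay there; taking $s=t$ all the way to $t=1$ therefore breaks the domination. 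You must cap $s$ away from $1$ (e.g.\ set $s=t$ for $t\le 1/2$ and $s=1/2$ otherwise) and check the comparison separately in the two regimes. Relatedly, a Gaussian constant mismatch cannot be ``absorbed into the comparison constant''; it forces a time rescaling. None of this is fatal, but the Bessel route avoids all of it.
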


\begin{lem}\label{lem:offJac}
In dimension $d=1$, let $\E,\F \subset \{1\}$ and $(\ab) \in A(\E) \times A(\F)$.
Assume that $\mE(\a), \mF(\b) < 0$. Then the operator
$$
N^{(\a,\E),(\b,\F)} f(\t) = \int_0^\pi 
	\chi_{\{ |\t - \vp| \ge \pi/2 \}} 
	 \bigg[\sup_{0 < t \le 1} \mathbb{K}_{t}^{(\a,\E),(\b,\F)}(\t,\vp)\bigg]  f(\vp)\, d\eta_{\ab}(\vp),
 \qquad f \ge 0,
$$
is of strong type $(1,1)$ with respect to $\eta_{\ab}$.
\end{lem}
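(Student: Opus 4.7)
By Fubini's theorem (legal for the positive integral operator), the strong type $(1,1)$ assertion is equivalent to
\[
\sup_{\vp\in(0,\pi)}\int_0^\pi \chi_{\{|\t-\vp|\ge\pi/2\}}\sup_{0<t\le 1}\mathbb{K}_t^{(\a,\E),(\b,\F)}(\t,\vp)\, d\eta_{\ab}(\t) < \infty.
\]
The whole setting is invariant under the transformation $(\t,\vp,\a,\b,\E,\F)\mapsto(\pi-\t,\pi-\vp,\b,\a,\F,\E)$, so I may restrict to $\vp\le\pi/2$. Then the integration in $\t$ runs over $[\vp+\pi/2,\pi)\subset[\pi/2,\pi)$; on this region $\t\simeq 1$ and $\pi-\vp\simeq 1$, hence $\t\vp\simeq\vp$ and $(\pi-\t)(\pi-\vp)\simeq\pi-\t$.

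Next, I compute $\mathbb{K}_t(\t,\vp)\,d\eta_{\ab}(\t)$ directly, splitting on whether $1\in\E$ and whether $1\in\F$. The crucial algebraic cancellation is that when $1\in\E$ the exotic prefactor $(\t\vp)^{-2\a}$ multiplies $\t^{2\a+1}$ from the measure to produce $\vp^{-2\a}\,\t$, and analogously at the right endpoint when $1\in\F$. In all four cases one arrives at
\[
\mathbb{K}_t^{(\a,\E),(\b,\F)}(\t,\vp)\,d\eta_{\ab}(\t)\simeq \Psi(\vp)\cdot \t^p(\pi-\t)^q\, K_t^{\tilde\a,\tilde\b}(\t,\vp)\,d\t,
\]
where $(\tilde\a,p)=(-\a,1)$ if $1\in\E$ and $(\tilde\a,p)=(\a,2\a+1)$ otherwise (analogously for $(\tilde\b,q)$), and $\Psi(\vp)$ is a (possibly empty) product of $\vp^{-2\a}$ and $(\pi-\vp)^{-2\b}$. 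Under the hypotheses $\mE(\a),\mF(\b)<0$ the exotic exponents $-2\a$ or $-2\b$ appearing in $\Psi$ are positive, so $\Psi$ is uniformly bounded on $(0,\pi/2]$; likewise $p,q>-1$ in every case.

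The core estimate is the uniform bound $\sup_{0<t\le 1}K_t^{\tilde\a,\tilde\b}(\t,\vp)\lesssim 1$ on the off-diagonal region. Because $|\t-\vp|\ge\pi/2$, the Gaussian factor is at most $\exp(-c_0/t)$ with $c_0=c_2\pi^2/4>0$. The remaining factors $(\t\vp+t)^{-\tilde\a-1/2}$ and $[(\pi-\t)(\pi-\vp)+t]^{-\tilde\b-1/2}$ are handled by a brief case split on the sign of the exponent: negative exponents are controlled by $(A+t)^\gamma\le t^\gamma$, while nonnegative exponents give $(A+t)^\gamma\le C$ since the base is bounded above on our region. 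Everything then reduces to the elementary fact that $\sup_{0<t\le 1}t^{-N}\exp(-c_0/t)<\infty$ for every $N\in\R$.

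Combining the pieces, the integral on the left-hand side of the first display is controlled by $\Psi(\vp)\int_0^\pi \t^p(\pi-\t)^q\,d\t$, which is a finite constant independent of $\vp$. The main technical hurdle is bookkeeping the four cases uniformly while accommodating the possibility that $\eta_{\ab}$ fails to be locally finite at the boundary when an exotic parameter drops below $-1$; this singularity of the measure is precisely absorbed by the exotic prefactors, with the $\t$-exponent collapsing from the potentially dangerous $2\a+1$ (or $2\b+1$) to the harmless $1$, making the argument robust across the full range of admissible parameters.
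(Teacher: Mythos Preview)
Your argument is correct and follows essentially the same approach as the paper's proof: both establish the uniform bound $\sup_{0<t\le 1}K_t^{\tilde\a,\tilde\b}(\t,\vp)\lesssim 1$ on the off-diagonal region via the Gaussian decay (the paper phrases this as $\chi\sup_t\mathbb{K}_t\lesssim\mathbb{K}_1$, noting that $K_1^{\tilde\a,\tilde\b}\simeq 1$), and then integrate the remaining exotic prefactors against $d\eta_{\ab}(\t)$, using the same cancellation you spell out. Your version is more explicit in the bookkeeping and adds a symmetry reduction to $\vp\le\pi/2$, while the paper compresses the second step to ``elementary; details left to the reader,'' but the route is the same.
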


It remains to prove these two lemmas. The idea of the first proof below relies on reducing
the problem to an application of the maximal theorem from the Bessel context that was obtained in
Section \ref{sec:Bes}.

\begin{proof}[{Proof of Lemma \ref{lem:onJac}}] 
We first consider the special case $\W = \emptyset$, which means the restriction $\t,\vp \in (0,3\pi/4)^d$.
Observe that in the one-dimensional case, for $\gamma, \delta > -1$ fixed, we have 
\begin{align*} 
K_{t}^{\gamma, \delta}(\t,\vp) 
& \simeq
\big[ \t \vp + t\big]^{-\gamma -1/2}
	\frac{1}{\sqrt{t}} \exp\bigg( - c_2\frac{(\theta-\varphi)^2}{t}\bigg) \\
& \simeq 
W_{t/(4c_2)}^{\gamma} (\t,\vp),
\qquad 
\t,\vp \in (0,3\pi/4), \quad 0 < t \le 1, 
\end{align*} 
where $W_t^{\gamma}$ is the kernel of the classical one-dimensional Bessel semigroup considered in
Section~\ref{sec:Bes}; the last comparability above follows from \eqref{Bclcomp}.
Consequently, if $\mE(\a), \mF(\b) < 0$ we obtain
\[
\mathbb{K}_{t}^{(\a,\E),(\b,\F)} (\t,\vp) 
\simeq 
\mathbb{W}_{t/(4c_2)}^{\a,\E} (\t,\vp), 
\qquad 
\t,\vp \in (0,3\pi/4)^d, \quad 0 < t \le 1,
\]
which together with the fact that the measures $\eta_{\ab}$ and $\eta_{\a}$ are comparable in $(0,3\pi/4)^d$
implies
\[
\mathbb{L}_{*,\emptyset}^{(\a,\E),(\b,\F)}f(\t)
\lesssim
\chi_{(0,3\pi/4)^d } (\t) \,
\mathbb{W}_{*}^{\a,\E} \big( f  \chi_{(0,3\pi/4)^d } \big) (\t), \qquad \theta \in (0,\pi)^d.
\]
The conclusion for $\W = \emptyset$ now follows directly from Theorem \ref{thm:weakB}.

The general case $\W \subset \{1,\ldots,d\}$ follows from this
by simple symmetry arguments. To proceed, define sets
\begin{align*}
\check{\E} = (\F \cap \W) \cup (\E \cap \W^c),
\qquad
\check{\F} = (\E \cap \W) \cup (\F \cap \W^c),
\end{align*}
and multi-parameters $\check{\a},\check{\b}$ by
\begin{align*}
\check{\a}_i = 
\begin{cases}		
			\b_i, & i \in \W, \\
			\a_i, & i \in \W^c,
	\end{cases}
\qquad
\check{\b}_i = 
\begin{cases}		
			\a_i, & i \in \W, \\
			\b_i, & i \in \W^c.
	\end{cases}
\end{align*}
Further, let $\Psi \colon (0,\pi)^d \to (0,\pi)^d$ be the bijection determined by
\begin{align*}
\Psi (\t)_i = 
\begin{cases}
		\pi - \t_i, & i \in \W,\\
		\t_i, & i \notin \W,			
	\end{cases} \qquad i =1,\ldots,d.
\end{align*} 
Then one easily checks that 
\[
\| f\circ \Psi \|_{ L^1(d\eta_{\check{\a}, \check{\b}}) }
=
\| f \|_{L^1(d\eta_{\ab})},
\qquad
\big(\mathbb{L}_{*,\W}^{(\a,\E),(\b,\F)}f\big)\circ\Psi
=
\mathbb{L}_{*,\emptyset}^{(\check{\a},\check{\E}),(\check{\b},\check{\F})}(f \circ\Psi)
\]
(notice that $m_{\check{\E}}(\check{\a}), m_{\check{\F}}(\check{\b}) < 0$ if and only if
$\mE(\a), \mF(\b) < 0$).
Combining these relations with the already known weak type $(1,1)$ of the operator
$\mathbb{L}_{*,\emptyset}^{(\check{\a},\check{\E}),(\check{\b},\check{\F})}$,
we arrive at the desired conclusion.
\end{proof}

\begin{proof}[{Proof of Lemma \ref{lem:offJac}}]
To begin with, we claim that the kernel of $N^{(\a,\E),(\b,\F)}$ is controlled
by $\mathbb{K}_{1}^{(\a,\E),(\b,\F)} (\t,\vp)$.
To see this it is enough to check that, for $\ab > -1$ fixed, the bound
\begin{align*} 
\chi_{\{ |\t - \vp| \ge \pi/2 \}}
\big[ \t \vp + t\big]^{-\a-1/2}
	\big[ (\pi-\theta)(\pi-\varphi) + t]^{-\b-1/2}
	\frac{1}{\sqrt{t}} \exp\bigg( - c_2\frac{(\theta-\varphi)^2}{t}\bigg)
\lesssim 1,
\end{align*} 
holds uniformly in $\t,\vp \in (0,\pi)$ and $0 < t \le 1$. This, however, is straightforward.
Indeed, the left-hand side here is easily dominated, up to a multiplicative constant, by
${t^{-\a-\b-5/2}} \exp(-c_2 \frac{\pi^2}{4t}) \lesssim 1$. The claim follows.

Now, to conclude it suffices to verify that
\[
\int_0^\pi \mathbb{K}_{1}^{(\a,\E),(\b,\F)} (\t,\vp) \, d\eta_{\ab} (\t) 
\lesssim 1,
\qquad \vp \in (0,\pi),
\] 
which is elementary. Details are left to the reader.
\end{proof}

Now Theorem \ref{thm:weakJr} and thus also Theorem \ref{thm:weakJd} are proved.


\end{document}